\newcommand\blfootnote[1]{%
  \begingroup
  \renewcommand\thefootnote{}\footnote{#1}%
  \addtocounter{footnote}{-1}%
  \endgroup
}
\newtheorem{theorem}{Theorem}[section]
\newtheorem{lemma}{Lemma}[section]
\newtheorem{proposition}{Proposition}[section]
\newtheorem{corollary}{Corollary}[section]
\newtheorem{example}{Example}[section]
\newtheorem{definition}{Definition}[section]
\newtheorem{remark}{Remark}[section]
\newcommand\N{\mathbb{N}}
\newcommand{\R}{\mathbb R}
 \newcommand{\F}{\mathcal F}
 \newcommand{\B}{\mathcal B}
 \newcommand{\T}{\mathcal{TB}}    
 \newcommand{\C}{\mathcal{C}}  
  \newcommand{\BC}{\mathcal{BC}}  
  \newcommand{\TBC}{\mathcal{TBC}}
  \newcommand{\X}{\mathcal X}
  \newcommand{\V}{\mathcal V}
   \newcommand{\K}{\mathcal K}
  \newcommand{\D}{\mathcal D}
  \newcommand{\Linear}{\mathcal L}
  \newcommand{\U}{K} 
  \newcommand{\Tm}{T} 
 \newcommand{\W}{W}  
 \newcommand{\gammaBCk}{\overline{\gamma}}
\newcommand{\alphaBCk}{\overline{\alpha}}
  \newcommand{\M}{M}
  \newcommand{\w}{\omega_\B}
 \newcommand{\eps}{\varepsilon}
 \newcommand{\z}{w}
\begin{document}
\title[]{Regular measures of noncompactness  and Ascoli-Arzel\` a type compactness criteria in spaces of vector-valued  functions}

% use optional labels to link authors explicitly to addresses:
% \author[label1,label2]{}
% \address[label1]{}
% \address[label2]{}
\author{D. Caponetti}

%%%%%\cortext[mycorrespondingauthor]{Corresponding author}
%%%%\ead{diana.caponetti@unipa.it}
%%%%\address{Dipartimento di Matematica e Informatica, Universit\`a di Palermo, Via Archirafi 34, 90123 Palermo}

%% Group authors per affiliation:
\author{A. Trombetta}
\author{G. Trombetta}
%%%\address{Dipartimento di Matematica e Informatica, Universit\`a della Calabria}
\blfootnote{
 {\em Key words and phrases.} \ Banach space, bounded function, differentiable function, measure of noncompactness, Ascoli-Arzel\` a  theorem.
 
{\em  Mathematics Subject Classification.}  Primary 46B50; Secondary  46E40,  47H08.
}

\maketitle

 \baselineskip=15pt

%% Group authors per affiliation:

%% or include affiliations in footnotes:
%%%\author[mymainaddress,mysecondaryaddress]{Elsevier Inc}

%%%%%%%\author[HHKKKXXXXX]{Global Customer Service}

%%%%%\address[mymainaddress]{1600 John F Kennedy Boulevard, Philadelphia}
%%%%%\address[mysecondaryaddress]{360 Park Avenue South, New York}

%%%%\begin{frontmatter}

\begin{abstract} 
 In this paper we estimate the Kuratowski and the Hausdorff measures of noncompactness of bounded subsets of spaces of vector-valued bounded functions  and  
of  vector-valued bounded differentiable functions. To this end, we use a quantitative characteristic modeled on a new  equiconti\-nuity-type concept and  classical quantitative characteristics related to pointwise relative compactness. We obtain new regular measures of noncompactness in the spaces taken into consideration.
The established inequalities reduce to precise formulas in some classes of subsets.
We derive Ascoli-Arzel\` a type compactness criteria.
\end{abstract}

%%%%\end{frontmatter}

%%%%%% \linenumbers

\section{Introduction}

Let  $Y$ be a real Banach space, possibly infinite-dimensional.  Throughout the paper  we will deal  with $Y$-valued functions.
In \cite{A} Ambrosetti has extended the   Ascoli-Arzel\` a 
 theorem  to the  space of $Y$-valued  functions defined and continuous  on  a compact metric space and equipped with the supremum norm,
obtaining that a bounded subset of the space is relatively compact if and only if it is  equicontinuous   and pointwise  relatively compact, and establishing a precise  formula for the Kuratowski measure  of noncompactness of bounded and equicontinuous subsets of the space.  
Later, Nussbaum (\cite{N})  has estimated both the  Kuratowski  and the Hausdorff  measures  of noncompactness of 
 bounded subsets of  that  space, finding, as a special case, the result of Ambrosetti.
On the other hand,  Bartle \cite{Bartle} extended  the Ascoli-Arzel\` a  
 theorem  to the space 
 $\BC (\Omega, \R)$
of all real-valued functions defined,   continuous and bounded on a topological space~$\Omega$. Precisely,  a bounded subset $M$ of $\BC( \Omega, \R)$ is relatively compact if and only if  
  {for any positive $\varepsilon$ there is a finite partition  $\{A_1,  \dots, A_n \} $ of $\Omega$ such that if $x,y$ belong to the same $A_i$, then $|f(x)-f(y)| \le~\eps$, for all $f \in M$}.
In \cite{heinz} the estimates of Nussbaum have been extended 
  to the space of $Y$-valued functions defined and bounded on a  general set $\Omega$,  by means of quantitative characteristics which 
unfortunately do not allow to obtain a compactness criterion for all bounded subsets of the whole space.
 Similar  results 
 have been obtained in 
 \cite{AT}, where  the Hausdorff measure of noncompactness has been estimated in the space of totally  bounded $Y$-valued functions defined on a set  $\Omega$,
obtaining implicitly,  when $\Omega$  is a topological space,  a generalization of the Bartle criterion.
  We also mention that in \cite{BCW, BMR, BN} measures of noncompactness have been investigated  in the   space of $Y$-valued functions defined, continuous and bounded on unbounded intervals. 
   Actually,
  the compactness criterion of Ascoli-Arzel\`a  has been extended to several more general cases  which find applications in many fields of mathematical analysis.
 Of interest, in addition to the already mentioned cases, are the results that generalize the criterion to spaces of differentiable functions (among others, we recall   \cite{iraniano, Av, cianciaruso, LLWZ, SAIE}).
 In particular, in  \cite{cianciaruso}  the relative compactness has been characterized  for subsets of the space of  $Y$-valued functions defined,   $k$-times continuously  differentiable  and  bounded with all differentials up to the order $k$ on unbounded intervals. While in \cite{iraniano}   the case of real-valued functions defined   either on a compact subset of $\R^n$, or on $\R^n$ itself when   functions vanish at infinity, has been considered.
  Finally, we have to recall that  a general version  of the Ascoli-Arzel\` a    theorem concerns the characterization of relative compactness  in the space $\D(\Omega, T)$, that is, 
 the space  $\C(\Omega, T)$ of continuous functions between two topological spaces  $\Omega$ and $T$  endowed with the topology of compact convergence (see, for example, \cite[Theorem 18]{K}).
 
 The results of this paper will cover and extend the mentioned classical and more recent results on the subject.
Our first aim is to construct  regular measures of noncompactness equivalent to  the Kuratowski and the Hausdorff ones in  the    space $\B (\Omega, Y)$ of $Y$-valued   functions    defined and bounded on a nonempty set  $\Omega$,  made into a Banach space by the supremum norm. 
The main condition is  a new  equicontinuity-type concept  which we will refer to as extended equicontinuity.
 Then a~quantitative characteristic  measuring
  the degree of extended equicontinuity, together with the classical
 quantitative characteristics $\mu_\alpha,  \ \sigma_\alpha, \ \mu_\gamma, \ \sigma_\gamma$ (where $\alpha$ and $\gamma$ stand for the Kuratowski and the  Hausdorff measures, respectively) 
measuring the degree of pointwise relatively compactness  (see \cite{A,AT,heinz, N}), will allow us to estimate  the  Kuratowski and the Hausdorff measures of noncompactness of bounded subsets of the space.
When $\Omega$ is  an open subset  of a Banach space,  of independent interest are the results we are able to obtain
 in the  space $\BC^k(\Omega, Y)$ of  $Y$-valued functions defined,   $k$-times continuously  differentiable  and  bounded with all differentials up to the order $k$ on $\Omega$ and  endowed with the norm
$
\|f\|_{\BC^k}= \max\{\|f\|_\infty, \|df\|_\infty, \dots, \| d^k f\|_\infty \},
$
and also in  the complete locally convex space $\D^k(\Omega, Y)$,    that is, 
 the space  $\C^k(\Omega, Y)$   of 
 $Y$-valued functions  defined and  $k$-times continuously   differentiable on $\Omega$, endowed with the topology  of compact convergence for all differentials.  It is worth  mentioning that in   $\BC^k(\Omega, Y)$ and $\D^k(\Omega, Y)$ the formulation of our  equicontinuity-type concept,  as well as of all quantitative characteristics there considered, will   depend on  each  space in a natural way.
As a consequence,  we can formulate Ascoli-Arzel\` a type  compactness criteria 
in  spaces of $Y$-valued functions in very general settings.

The paper is organized as follows. 
    In Section~\ref{P}, we introduce some definitions and preliminary facts on measures of noncompactness. 
    Then we 
consider  in $\B (\Omega, Y)$ the generalized measure of non-equicontinuity  $\omega$ (see \cite{AT, heinz})
and the quantitative characteristics  $\mu_\alpha,  \ \sigma_\alpha, \ \mu_\gamma, \ \sigma_\gamma$. In particular, we put in evidence that  $\omega$ 
associated  with any of these quantities (which actually are equivalent), differently from what happens  in the space of totally bounded functions, does not allow  in general to characterize compactness in $\B(\Omega,Y)$.
The main results  of the paper  are presented in the following two sections. In Section~\ref{B},
we introduce our new  equiconti\-nuity-type concept,
then we obtain  inequalities and compactness criteria in any  Banach subspace of    $\B(\Omega, Y)$.
  It is worthwhile to notice that,  as   a particular case when $\Omega$ is a topological space,  the results in $\BC(\Omega,Y)$ hold when $\Omega$ is not necessarily compact and the functions not necessarily totally bounded, so we generalize at  the same time the   result of Nussbaum and the 
   Bartle criterion.
 In Section~\ref{k},   we obtain inequalities and compactness criteria  in
 Banach subspaces of $\BC^k(\Omega, Y)$    and in  the space $\D^k(\Omega, Y)$.
 In all the spaces, we always construct  regular measures of noncompactness equivalent to the  Kuratowski and the Hausdorff measures.   A precise formula for the  Kuratowski measure of noncompactness  is obtained for bounded and extendedly equicontinuous subsets of Banach subspaces  of $\B(\Omega,Y)$. Analogous results are obtained 
in   Banach subspaces  of   $\BC^k(\Omega, Y)$    and in  the space $\D^k(\Omega, Y)$.
 Further,  precise formulas for the Hausdorff measure of noncompactness are  given for bounded and  equicontinuous subsets of the spaces $\T (\Omega, Y)$  and $\TBC^k(\Omega, Y)$, consisting 
 of totally bounded functions and
 functions  of $\BC^k(\Omega, Y)$ which are compact with all differentials, respectively.  An analogous formula is obtained in  $\D^k (\Omega, Y)$.
 In  the last section, we obtain some results for pointwise relatively  compact subsets of  $\B(\Omega, Y)$ under the hypothesis that $Y$ is a Lindenstrauss space.

 In the literature  a different approach is sometimes used to obtain measures of noncompactness in some Banach spaces of $Y$-valued functions  (see, for example,  \cite{iraniano,BCW, BMR}), but not always such measures  enjoy the property of regularity.          
\section{Preliminaries} \label{P}
 In the following we will consider real linear spaces.
  Given a Banach space $E$ with zero element $\theta$,  we
denote by $B(x, r)$ the closed ball  with center  $x$ and radius $r>0$,  and   $B(E)$  will stand for  $B(\theta, 1)$. If $M$ is a
subset of $E$ we denote by  $\overline{M}$, $\mbox{co}M$  and $\overline{\mbox{co}}M$   the closure, the convex hull and the closed  convex  hull of $M$, respectively. 
 We   use the symbol $\mbox{diam}_E (M)$  for the diameter of  $M$ in $E$,  or simply $\mbox{diam} (M)$ if no confusion can arise.
 If $M$ and $N$ are subsets of $E$ and  $\lambda \in   \R$, then $M+N$ and $\lambda M$ will denote the algebraic operations on sets.
  Next, let ${\mathfrak M}_E$ be
 the family of all nonempty
bounded subsets of $E$ and let  ${\mathfrak N}_E$ be its subfamily consisting of all relatively compact sets. 
Given  a set function $\mu:  {\mathfrak M}_E \to [0, + \infty)$,  the family
$ 
\mbox{ker} \,   \mu= \{ M \in {\mathfrak M}_E \!: \ \mu (M)=0 \}$ is called
 { kernel} of $\mu$.
Following \cite{BG}, we introduce the concept of measure of noncompactness.

\begin{definition} \label{MNC}  
{\rm A set function $\mu : {\mathfrak M}_E \to [0, + \infty)$ is said to be a {\it measure of noncompactness} in $E$ if the following conditions hold for $M,N \in {\mathfrak M}_E $:
\begin{itemize}
\item[(i)] 
 $\mbox{ker} \,   \mu$ is nonempty  and 
$\mbox{ker} \,  \mu \subseteq {\mathfrak N}_E$;
\item[(ii)]  $M \subseteq N$ implies $\mu (M) \le \mu (N)$;
\item[(iii)] 
$\mu ({\overline{M}}) = \mu(M)$;
\item[(iv)] 
$\mu ({\mbox{co}}M) = \mu(M)$;
\item[(v)] 
 $ \mu ( \lambda M+ (1-\lambda) N )  \le   \lambda \mu (M) + (1-\lambda) \mu(N)$,  for $\lambda \in  [0, 1]$;
\item[(vi)] 
if  $(M_n)_n$ is a sequence of closed sets from ${\mathfrak  M}_E$
 such that $M_{n+1} \subseteq M_n$ for $ n=1,2, \dots$ and  $\lim_{n \to \infty} \mu (M_n)=0$, then the intersection  set $M_\infty = \bigcap_{n=1}^\infty M_n$ is nonempty.
\end{itemize}}
\end{definition}
We will say that  $\mu$  is a full measure if  ker$\mu= {\mathfrak  N}_E$.
A measure  $\mu$ is called sublinear if it is homogeneous and  subadditive, i.e.
 \begin{itemize}
\item[(vii)] $\mu( \lambda M) = |\lambda| \mu (M)$ for $\lambda \in \R$, and $  \mu (M+N) \le \mu(M)+ \mu(N)$, 
\end{itemize}
 moreover, $\mu$  is said to have the maximum property if
\begin{itemize}
\item[(viii)]  $\mu (M\cup N) = \max \{\mu (M),\mu (N)\}$.
 \end{itemize}
{An important class of measures of noncompactness is that constituted by regular measures,
which are full, sublinear measures with the maximum property.

We recall that given a set  $M$ in $ {\mathfrak M}_E$ 
the Kuratowski measure of noncompactness of $M$, denoted by $\alpha (M)$, is the infimum of all $\eps >0$ such that $M$ can be covered by finitely many sets of diameters  not  greater than $\eps$ 
and 
 the Hausdorff measure of noncompactness of $M$, denoted by $ \gamma_E (M)$, is the infimum of all $\eps >0$ such that $M$ has a finite $\eps$-net in $E$.
  These measures of noncompactness  are regular, besides they are equivalent,  since
$ \gamma_E (M) \le \alpha (M) \le 2 \gamma_E (M)$. 
For our pourposes, it is  also useful   to recall that the Istratescu measure of noncompactness $\beta (M)$  of $M$ is  the infimum of all $\eps >0$ such that  $M$ does not have an infinite $\eps$-separation, i.e there is no infinite set in $M$ such that  $\|x-y\| \ge \eps$ for all $x,y$ in this set, with $x \ne y$. Moreover,   the  inequalities $\beta (M) \le \alpha (M) \le 2 \beta (M)$ hold true. 
 For more details on   measures of noncompactness   the reader is referred to \cite{ADB, BG}.

  Throughout,  $ \Omega$  will be a nonempty set and $(Y, \|\cdot \|)$  a   Banach space, $\gamma$ will  always stand for~$\gamma_Y$. 
We denote by $ \mathcal{F}= \mathcal{F}(\Omega, Y)$ the  linear space of all  functions $f :   \Omega \to Y$.
Given a set of functions $ M $ in $ \mathcal{F}$,  $x \in \Omega$  and $A \subseteq \Omega$ we define 
the subsets $M(x)$ and $M(A)$ of the Banach space $Y$   by letting
\begin{equation} \label{Mdix}
 \M(x)= \{ f(x): f \in M \}, \qquad
 M(A)=  \{ f(x):  x \in A, \ f \in M \}.
 \end{equation}
The symbol  $\B= \B(\Omega, Y)$ will stand for the Banach space of all bounded functions in $ \F$, endowed with the {\bf} supremum norm
\[
\|f\|_\infty  = \sup \{  \|f(x) \|, \  x \in \Omega \}.
\]
We denote by  $\T= \T(\Omega, Y)$ the space of all $Y$-valued  functions defined and totally  bounded  on $\Omega$,  i.e.  such that $f(\Omega)$ is relatively compact,
and whenever  $\Omega$ is a topological space, we denote by  $\BC=\BC( \Omega, Y)$  the space of all $Y$-valued  functions defined, bounded and continuous on $\Omega$.
Both $\T$ and $\BC$ are Banach subspaces of $\B$.
 A function $f   \in \BC  \cap \T$  is  called  compact.

We devote the remaining part of this section to introduce and discuss  in $\B$ the quantitative characteristics,  based on the classical results on compactness given in  \cite{Bartle, F, S},  which have been  useful tools 
for the study of compactness, for example,  in spaces of totally bounded or compact functions.
Given $\M \in {\mathfrak M}_{\mathcal B}$,  we consider  (see \cite{ADP, AT, CTT, heinz, N, TTT1})
 the quantitative characteristic  
   \begin{eqnarray}  \label{omega}
 \begin{split}
\omega(M)=  \inf \{ \eps >0: &  \ \mbox{there is a finite partition} \ \{ A_1, \dots, A_n\}  \ \mbox{of} \ \Omega
\\
&\mbox{such that, for all}  \ f \in M, \  \mbox{diam} (f(A_i)) \le \eps  \  \mbox{for}   \ i= 1, \dots, n\},
\end{split}
 \end{eqnarray} 
which,  according to \cite{heinz}, generalizes  the ``measure of non-equicontinuity'' 
of Nussbaum \cite{N},
and  the quantitative characteristics (see \cite{A,heinz, N})
  \begin{equation*}  
 \mu_\alpha(\M)= \sup_{x \in \Omega} \alpha(\M(x)), \quad  \mu_\gamma(\M)= \sup_{x \in \Omega} \gamma(\M(x)).
 \end{equation*}
It is easy to verify that 
$
 \mu_\gamma (M)  \le \mu_\alpha (M)  \le 2 \mu_\gamma (M) $. A set  $\M \in {\mathfrak M}_{\mathcal B}$ is called pointwise relatively compact   if   $\mu_\alpha (M)=0$  ($\mu_\gamma (M)=0$).
 We also consider  (see \cite{AT, heinz}) the quantitative characteristics
 \[
 \sigma_\alpha(\M)=  \alpha(\M(\Omega)), \quad  \sigma_\gamma(\M)=   \gamma(\M(\Omega)).
 \]
We have
 $
 \sigma_\gamma (M) \le \sigma_\alpha (M)  \le 2 \sigma_\gamma (M) $. Moreover  $\mu_\alpha (M)  \le \sigma_ \alpha (M) $ and  $\mu_\gamma (M)  \le \sigma_ \gamma (M) $.

For the sake of completeness we recall the result of Nussbaum \cite[Theorem 1]{N}. 
\begin{theorem} \label{N}
 Let   $(\Omega, d)$ be a  compact metric space, $(\Tm,s)$ a metric space and $\C(\Omega,T)$
   the space of functions defined and continuous on $\Omega$ taking values in $T$, 
 made into a metric space    by  $d_\infty(f,g)= \sup_{x \in \Omega} d(f(x),g(x))$.
 Let  $M$ a bounded set in $\C(\Omega,T)$, set
  \[
 a=  
 \inf \{ \omega_N (\delta, M): \ \delta \ge 0 \},
 \]
 where    $\omega_N (\delta, M) = \sup \{ s(f(x), f(y)) : \ x,y \in \Omega ; \ d(x,y)  \le \delta, \ f \in M\}$ is the modulus of continuity of $M$,  then 
 \[
  \max \{ \mu_\alpha(M), \ \frac1{2}  a \} \le 
 \alpha(M) \le  \mu_\alpha (M) + 2 a.  \]
\end{theorem}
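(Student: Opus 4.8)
The plan is to establish the two bounds $\max\{\mu_\alpha(M),\tfrac12 a\}\le\alpha(M)$ and $\alpha(M)\le\mu_\alpha(M)+2a$ separately; throughout, $\alpha$ denotes the Kuratowski measure of noncompactness, whose definition --- and its monotonicity, its behaviour under nonexpansive maps, and the maximum property for finite unions --- make sense and hold verbatim in the metric spaces $\C(\Omega,T)$ and $T$.

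For $\mu_\alpha(M)\le\alpha(M)$ I would note that for fixed $x\in\Omega$ the evaluation $f\mapsto f(x)$ is nonexpansive, since $s(f(x),g(x))\le d_\infty(f,g)$, and that a nonexpansive image of a set of diameter $\le\eta$ again has diameter $\le\eta$; hence any finite cover of $M$ by sets of small diameter is carried onto such a cover of $M(x)$, so $\alpha(M(x))\le\alpha(M)$, and the supremum over $x\in\Omega$ gives the inequality. For $\tfrac12 a\le\alpha(M)$, fix $\eps>0$ and a finite cover $M=\bigcup_{i=1}^{n}M_i$ with $\mbox{diam}(M_i)\le\alpha(M)+\eps$ (discarding empty members), and pick $f_i\in M_i$. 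Each $f_i$ is uniformly continuous on the compact space $\Omega$, so there is $\delta>0$ with $s(f_i(x),f_i(y))\le\eps$ for every $i$ whenever $d(x,y)\le\delta$. For any $f\in M$, say $f\in M_i$, and $d(x,y)\le\delta$, the triangle inequality through $f_i(x)$ and $f_i(y)$, together with $s(f(z),f_i(z))\le d_\infty(f,f_i)\le\mbox{diam}(M_i)$, gives $s(f(x),f(y))\le 2(\alpha(M)+\eps)+\eps$; hence $a\le\omega_N(\delta,M)\le 2\alpha(M)+3\eps$, and letting $\eps\to0$ finishes this half.

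The substantial half is $\alpha(M)\le\mu_\alpha(M)+2a$, where the idea is to convert a covering problem in the (infinite-dimensional) space $\C(\Omega,T)$ into a finite combinatorial one. Fix $\eps>0$. Since $\delta\mapsto\omega_N(\delta,M)$ is nondecreasing with infimum $a$, choose $\delta>0$ with $\omega_N(\delta,M)\le a+\eps$, and, by compactness of $\Omega$, points $x_1,\dots,x_m$ with $\Omega=\bigcup_{j=1}^{m}B(x_j,\delta)$. The set $K=\bigcup_{j=1}^{m}M(x_j)\subseteq T$ satisfies $\alpha(K)=\max_{1\le j\le m}\alpha(M(x_j))\le\mu_\alpha(M)$ by the maximum property, so it admits a finite cover $K=\bigcup_{l=1}^{p}C_l$ with $\mbox{diam}(C_l)\le\mu_\alpha(M)+\eps$. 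Now partition $M$ into the fibres of the map $f\mapsto(l_1(f),\dots,l_m(f))$, where $l_j(f)$ is chosen so that $f(x_j)\in C_{l_j(f)}$; there are at most $p^m$ of them. If $f$ and $g$ lie in the same fibre and $x\in\Omega$, pick $j$ with $d(x,x_j)\le\delta$; then, by the triangle inequality through $f(x_j)$ and $g(x_j)$,
\[
s(f(x),g(x))\le 2\,\omega_N(\delta,M)+\mbox{diam}\big(C_{l_j(f)}\big)\le\mu_\alpha(M)+2a+3\eps .
\]
Taking the supremum over $x$, each fibre has diameter $\le\mu_\alpha(M)+2a+3\eps$, hence $\alpha(M)\le\mu_\alpha(M)+2a+3\eps$, and $\eps\to0$ completes the proof.

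The main obstacle is exactly this last reduction. The two features that make it go through are compactness of $\Omega$ --- so that control over all of $\Omega$ is replaced by control at the finitely many sample points $x_j$, at the price of an error $\omega_N(\delta,M)$ --- and the maximum property of $\alpha$ --- so that the pointwise data $\{f(x_j):f\in M\}$ stay collectively of size $\le\mu_\alpha(M)$. One must then check that the number of fibres remains finite (it is at most $p^m$, so the induced cover of $M$ is admissible) and keep track of the three contributions $\omega_N(\delta,M)$ (from $x$ to $x_j$ along $f$), $\mbox{diam}(C_{l_j})$ (at $x_j$), and $\omega_N(\delta,M)$ again (from $x_j$ to $x$ along $g$), which is what produces the coefficient $2$ on $a$ and the single $\mu_\alpha(M)$.
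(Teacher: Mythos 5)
The paper states this theorem without proof, citing Nussbaum, so there is no in-paper argument to compare against; your proof is correct and uses exactly the sample-point/fibre technique that the paper itself deploys for its generalization Theorem~\ref{maintheoremalpha} (cover $\Omega$ by finitely many $\delta$-balls, cover the pointwise data $\bigcup_j M(x_j)$ at the centres by sets of diameter about $\mu_\alpha(M)$, and split $M$ into the at most $p^m$ fibres, each of diameter at most $\mu_\alpha(M)+2a+3\eps$), so nothing genuinely different is happening. The only caveat is notational: as printed the infimum defining $a$ ranges over $\delta\ge 0$, which would force $a=0$ because $\omega_N(0,M)=0$; your argument correctly reads it as $\inf_{\delta>0}\omega_N(\delta,M)$, which is the intended meaning, and with that reading every step checks out.
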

  The above theorem contains the classical result of Ambrosetti \cite[Theorem 2.3]{A}. Precisely,    if $M$ is a bounded and equicontinuous subset of  $\C(\Omega,T)$, then  $\alpha (M)= \mu_\alpha(M)$. 
  Let us observe that in \cite{heinz}}   Heinz has extended the result of Nussbaum to  bounded subsets of $\B $. In particular, in his paper (see \cite[Theorem~2 and Proposition~2~(i)]{heinz}) the following inequalities, which we state using the notations of this paper, are proved:
  \begin{equation} \label{h}
\max \left\{ \mu_\alpha (M), \ \frac1 2 \left( \omega(M)-  \sup_{ f \in M} \sigma_\alpha(\{ f \} )\right) \right\}
 \le 
 \alpha(M) \le \mu_\alpha (M) + 2 \omega(M).  
 \end{equation}
 Therefore,  if $M \in {\mathfrak M}_\T$ then  $\sup_{ f \in M} \sigma_\alpha( \{ f \} )) =0$, so that  $ \mbox{ker} (\mu_\alpha+2 \omega)=  {\mathfrak  N}_\T$
 and consequently  inequalities  (\ref{h}) furnish a criteria of compactness in  the space $\T$ of  totally bounded functions (see also \cite[Theorem~2.1]{AT}).
 But the same is not true  in the space  $\B$.  In fact,  the following Example~\ref{es elle infty} shows  that the left-hand side of (\ref{h}) can be equal to zero  for a given set $M \in {\mathfrak M}_\B$ without being $\alpha(M)=0$, which means that  the left-hand side of  (\ref{h})  vanishes on   sets $M \notin {\mathfrak N}_\B$. While  Example~\ref{EX1} shows that 
there are sets  $M  \in {\mathfrak N}_\B$ such that the right-hand side of (\ref{h}) does not vanish on $M$.
In other words, $\mbox{ker} (\mu_\alpha + 2 \omega) = {\mathfrak N}_\T$ and it is  a proper subset of  ${\mathfrak N}_\B$,
so that    the relations given in  (\ref{h}) are not adequate to characterize compactness in $\B$. 

 Before giving the examples, let us observe that given  a function $f \in  \B$, we have   $\alpha (\{f\})=  \gamma_\B (\{f\})=0$  and also $\mu_\alpha  (\{f\}) = \mu_\gamma  (\{f\}) =0$, due to the fact that  $\alpha (\{f (x)\})= \gamma (\{f(x) \} )=0$ for all $x \in \Omega$.
Moreover, for $f \in \B$ we have
  \begin{equation} \label{DUE}
  \omega (\{f\}) = \sigma_\alpha(\{f\}).
  \end{equation}
  Indeed, given $a > \sigma_\alpha (\{f\})$, that is $a> \alpha (f(\Omega))$, and  $\{ Y_1, \dots, Y_n\}$    a partition of $f(\Omega)$ with 
  $\mbox{diam}
(Y_i) \le a$ for $i = 1,...,n$, 
  considering the finite partition $\{A_1, \dots, A_n \}$ of $\Omega$ with $A_i = f^{-1} (Y_i)$,     we find $\omega(\{ f \}) \le a $.
Vice versa,  given  $b > \omega(\{ f \})$ and $\{A_1,...,A_n \}$  a finite partition of~$\Omega$  with $\mbox{diam}(f(A_i)) \le b$ for $i = 1,...,n$, we have $f(\Omega) =  \bigcup_{i=1}^n f(A_i)$, which implies $ \alpha (f(\Omega)) \le b$. Thus     (\ref{DUE}) follows.
\begin{example} \label{es elle infty}
{\rm
  Let $\Omega = [0, + \infty)$ and  $Y = \ell_\infty$. 
  Consider
the  sequence $(f_k )_k$  in   $\B([0, + \infty), \ell_\infty)$ defined by
\[
f_k(x)= \begin{cases}
e_n  \quad \mbox{for}  \ \ x= k - \frac{1}{n} , \ \ \mbox{for} \ \ n=1,2, \dots 
\\ %%%[.2in]
  \theta \quad \mbox{otherwise} .
\end{cases}
\]
Set $M= \{ f_k:  \ k=1,2,\dots \}$.
Given $x \in [0, + \infty)$, $M(x)=\{ \theta \}$ if $x \notin  \bigcup_{k=1}^{ \infty} \left\{ n - \frac 1k : \ n=1,2, \dots   \right\} $ or $M(x) = \{\theta,e_n \} $  if $x  \in 
 \bigcup_{k=1}^{ \infty} \left\{ n - \frac 1k : \ n=1,2, \dots   \right\}$,  so that $M$ is pointwise relatively compact.
 Using (\ref{DUE})  and taking into account that    $\mbox{\rm diam} \left(  \{ e_n: n=1, 2, \dots \} \right) =1 $   we deduce,  for each $k \in \N$,
\[
   \omega (\{f_k \}) =  \alpha \left( f_k (  [0, + \infty)) \right) =      \alpha \left(   \{ e_n: n=1, 2, \dots \} \right) \le 1.
 \]
Since the Istratescu measure  $\beta \left(   \{ e_n: n=1, 2, \dots \} \right) =1$,  we get  
$\alpha \left(   \{ e_n: n=1, 2, \dots \} \right) =1$ and hence $ \  \omega (\{f_k \}) =1$ which implies $\omega (M) \ge 1$.
On the other hand, since  $\mbox{diam} (f_k([0, + \infty))= \mbox{diam}   \left(   \{ e_n: n=1, 2, \dots \} \right) = 1$ for all $k$, taking the partition $\{[0, + \infty) \} $   we obtain $\omega (M) \le 1$, thus $\omega (M)=1$.
Now we prove $\alpha (M)=1$. First we notice that for $k \ne s$ we have 
\[
 \|f_k - f_s \|_\infty
=    \sup_{x \in \Omega}\|f_k(x) - f_s (x)\|=1,
\]
so that  $\beta (M) \ge 1$ and hence
$
  \alpha (M) \ge 1 .
$
  At the same time $   \mbox{diam}(M)= \sup_{k,s \in\N}  \|f_k-f_s \|_\infty    =1             $,
so that $\alpha (M) \le 1$, and our assert follows.
\\
Then, on the one hand $\alpha(M)=1$, on the other hand
\[
\mu_\alpha (M)=0,   \quad \omega (M)= 1= \sup_{ k \in \N} \sigma_\alpha(\{ f_k \} ),
\]
which in turn give $M \notin {\mathfrak N}_{\B([0, + \infty), \ell_\infty)}$ and
\[
\max \left\{ \mu_\alpha (M), \ \frac1 2 \left( \omega(M)-  \sup_{ k \in \N} \sigma_\alpha(\{ f_k \} )\right) \right\}=0.
\]
}
\end{example}
 \begin{example} \label{EX1}
 {\rm 
 Assume the Banach space $Y$ to be  infinite-dimensional. Let  $\Omega= B(Y)$  and  $I$  be  the identity function on $B(Y)$. Choose $M= \{I\}$, then
 \[
  \alpha (M)=  \gamma_\B (M)=\mu_\alpha (M)= \mu_\gamma (M)=0, \quad \sigma_\alpha (M)=\omega (M)=2, \quad \sigma_\gamma (M)= 1.
 \]
 In particular,   $M \in {\mathfrak N}_{\B(B(Y), Y)}$, but  the right-hand side of (\ref{h}) does not vanish on $M$.
}
\end{example}
   Further, in view  of the above example, the quantitative characteristics $\omega$,  $\sigma_\alpha$ and $\sigma_\gamma$   cannot be used in general  to characterize compactness in $\B$.  The goal of the next section  is to use a new  equicontinuity-type concept  and a quantitative characteristic modeled on it which will allow  us  to fill in this gap.

  To close this section, given    $\psi : {\mathfrak M}_\X \to [0, \infty)$  any  set function in $\{\mu_\alpha,  \ \sigma_\alpha, \ \mu_\gamma, \ \sigma_\gamma\}$,  from the properties of $\alpha$    and $\gamma$ we derive  that   $\psi $ satisfies axioms $(ii)\mbox{-}(v)$ of Definition~\ref{MNC}. 
  \begin{proposition} \label{(ii)-(v)} Assume  $ \psi \in \{\mu_\alpha,  \ \sigma_\alpha, \ \mu_\gamma, \ \sigma_\gamma\}$.  Let $M, N \in {\mathfrak M }_\X$ and $\lambda \in [0,1]$. Then
 \begin{itemize}
 \item[$(ii)$] $M \subseteq N$ implies $\psi (M) \le \psi (N)$; 
  \item[$(iii)$]  
  $\psi (\overline{M}) = \psi (M)$; 
   \item[$(iv)$] $\psi  (\mbox{co}{(M)}) = \psi (M)$; 
    \item[$(v)$] $ \psi (\lambda M+ (1-\lambda) N) \le  \lambda \psi (M) + (1-\lambda) \psi (N)$.
     \end{itemize}
 \end{proposition}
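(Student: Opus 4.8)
The plan is to reduce each of the four axioms to the corresponding property of the Kuratowski measure $\alpha$ and the Hausdorff measure $\gamma$ on bounded subsets of $Y$, by first recording how the evaluation operations $M \mapsto M(x)$ and $M \mapsto M(\Omega)$ interact with closures, convex hulls and convex combinations. Concretely, I would first verify the elementary set identities $(\mbox{co}\,M)(x) = \mbox{co}\,(M(x))$ and $(\lambda M + (1-\lambda)N)(x) = \lambda M(x) + (1-\lambda)N(x)$ for every $x \in \Omega$, together with the inclusion $\overline{M}(x) \subseteq \overline{M(x)}$, and their global counterparts, in which the first equality is weakened to the inclusion $\overline{M}(\Omega) \subseteq \overline{M(\Omega)}$, and similarly $(\mbox{co}\,M)(\Omega) \subseteq \mbox{co}\,(M(\Omega))$ and $(\lambda M + (1-\lambda)N)(\Omega) \subseteq \lambda M(\Omega) + (1-\lambda)N(\Omega)$. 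The pointwise equalities are immediate from the definitions in $(\ref{Mdix})$; the closure inclusions use that uniform convergence $f_n \to g$ in $\B$ forces $f_n(x) \to g(x)$ in $Y$; and the remaining inclusions hold because evaluating a convex combination $\lambda f + (1-\lambda)g$ at a point of $\Omega$ lands inside the algebraic convex combination of the value sets. The reverse inclusions one might want, such as $M(x) \subseteq \overline{M}(x)$, are trivial from $M \subseteq \overline{M}$ and $M \subseteq \mbox{co}\,M$, and together with the above they are exactly what makes the one-sided inclusions suffice.

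With these in hand, $(ii)$ is immediate: $M \subseteq N$ gives $M(x) \subseteq N(x)$ and $M(\Omega) \subseteq N(\Omega)$, and monotonicity of $\alpha$ and $\gamma$ yields the claim, after taking the supremum over $x \in \Omega$ in the cases $\psi = \mu_\alpha$ and $\psi = \mu_\gamma$. For $(iii)$, the chain $M(x) \subseteq \overline{M}(x) \subseteq \overline{M(x)}$ combined with the invariance of $\alpha$ (resp.\ $\gamma$) under closure forces $\alpha(\overline{M}(x)) = \alpha(M(x))$ for each $x$, whence $\mu_\alpha(\overline{M}) = \mu_\alpha(M)$; the identical chain with $\Omega$ in place of $x$ gives $\sigma_\alpha(\overline{M}) = \sigma_\alpha(M)$, and the same for $\gamma$. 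Axiom $(iv)$ is handled in the same way, using $M(x) \subseteq (\mbox{co}\,M)(x) = \mbox{co}\,(M(x))$ and $M(\Omega) \subseteq (\mbox{co}\,M)(\Omega) \subseteq \mbox{co}\,(M(\Omega))$ together with the invariance of $\alpha$ and $\gamma$ under the convex hull.

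Finally, for $(v)$ I would invoke the fact that $\alpha$ and $\gamma$ are homogeneous and subadditive on $\mathfrak{M}_Y$, so that $\alpha(\lambda A + (1-\lambda)B) \le \lambda\,\alpha(A) + (1-\lambda)\,\alpha(B)$ for $\lambda \in [0,1]$ and $A, B \in \mathfrak{M}_Y$, and likewise for $\gamma$. Applying this with $A = M(x)$, $B = N(x)$ and the identity $(\lambda M + (1-\lambda)N)(x) = \lambda M(x) + (1-\lambda)N(x)$ gives $\alpha\big((\lambda M + (1-\lambda)N)(x)\big) \le \lambda\,\alpha(M(x)) + (1-\lambda)\,\alpha(N(x)) \le \lambda\,\mu_\alpha(M) + (1-\lambda)\,\mu_\alpha(N)$, and the supremum over $x$ settles $\psi = \mu_\alpha$; the inclusion $(\lambda M + (1-\lambda)N)(\Omega) \subseteq \lambda M(\Omega) + (1-\lambda)N(\Omega)$ together with monotonicity of $\alpha$ does the same for $\psi = \sigma_\alpha$, and the two cases with $\gamma$ are word for word the same. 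I do not expect a genuine obstacle here; the only point deserving care is to keep track of which of the elementary relations above are honest equalities and which are merely one-sided inclusions, and to orient the latter so that each of the desired inequalities still comes out — which is precisely why those relations should be established first.
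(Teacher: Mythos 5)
Your proposal is correct and follows essentially the same route as the paper: both reduce each axiom to the corresponding property of $\alpha$ and $\gamma$ on subsets of $Y$ via the pointwise and global inclusions $\overline{M}(x)\subseteq\overline{M(x)}$, $(\mbox{co}\,M)(x)\subseteq\mbox{co}\,(M(x))$, $(\lambda M+(1-\lambda)N)(x)\subseteq\lambda M(x)+(1-\lambda)N(x)$ and their analogues with $\Omega$ in place of $x$, using monotonicity for the reverse directions. The only cosmetic difference is that you record some of these relations as genuine equalities where the paper contents itself with the one-sided inclusion that the argument actually needs.
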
 
\begin{proof}
 As the proof works  in the same way for $\alpha$ and $\gamma$, we 
assume $\psi \in \{ \mu_\alpha, \  \sigma_\alpha \}$.  
Let $M, N \in {\mathfrak M }_\X$ and $\lambda \in [0,1]$.  If $M \subseteq N$,  property $(ii)$ follows immediately  from  the definition of $\psi$.

(iii) Using (ii) we have  $\psi (M) \le \psi  ( \overline{M})$. We prove the  converse inequality.
\\
Let $\psi= \mu_\alpha$.  Let $x \in \Omega$, $y \in \overline{M}(x)$ and $f \in \overline{M}$ such that $y=f(x)$.
Let  $(f_n)_n$ be a sequence  of functions in $M$ such that $\|f_n-f\|_\infty \to 0$ as $n \to \infty$, then
$\|f_n(x)-f(x)\| \to 0$. Hence $f(x)=y \in \overline{M(x)}$, so that $\overline{M}(x) \subseteq\overline{M(x)}$ and
$
 \gamma(\overline{M}(x))  \le \gamma(\overline{M(x)}).  
$
 Since $ \alpha(\overline{M(x)})  = \alpha(M(x))$, we obtain  $ \alpha(\overline{M}(x)) \le \alpha(M(x)) $, for all $x \in \Omega$, which implies $ \mu_\alpha(\overline{M}) \le \mu_\alpha(M) $.
 \\
Let $\psi= \sigma_\alpha$.  
Given $x \in \Omega$, $y \in \overline{M}(\Omega)$ and $f \in \overline{M}$ such that $y=f(x)$. Repeating the same argument  as before, we find
  $\overline{M}(\Omega) \subseteq \overline{M(\Omega)}$ and then   $\sigma_\alpha(\overline{M} ) \le \sigma_\alpha(M)$.

  (iv)
Using (ii) we have  $\psi (M) \le \psi  ( \mbox{co}M)$. We prove the converse inequality.
\\
Let $\psi= \mu_\alpha$. 
Let $x \in \Omega$, $y \in (\mbox{co}M)(x)$ and $f \in \mbox{co}M$ such that $y=f(x)$.
Fix $f_1, \dots, f_n \in M$ and $\lambda_1, \dots, \lambda_n \in [0,1]$ with $\sum_{i=1}^n \lambda_i =1$ such that $f= \sum_{i=1}^n \lambda_i f_i$.
Then 
$y= f(x) \in \mbox{co}M(x)$. Therefore $ ( \mbox{co}M)(x)  \subseteq \mbox{co}M(x)$ for all $x \in \Omega$, and from this  follows $ \sigma_ \alpha(\mbox{co}M) \le  \sigma_ \alpha (M)$, as desired. 
\\
If $\psi= \sigma_\alpha$, 
the converse inequality follows from the fact that one proves $ (\mbox{co}M)(\Omega)  \subseteq \mbox{co}M(\Omega)$.

(v)  It is immediate, in one case from
$
 (\lambda M+ (1-\lambda) N)(x) \subseteq   (\lambda M)(x) + ((1-\lambda) N)(x)$ for all  $ x \in \Omega$,
and in the other, from
$
 (\lambda M+ (1-\lambda) N)(\Omega) \subseteq   (\lambda M)(\Omega) + ((1-\lambda) N)(\Omega) .
$
 \end{proof}
\begin{remark}
{\rm
   From   \cite[Example 1]{N} we see that  the  set functions  $ \mu_\alpha, \ \sigma_\alpha, \  \mu_\gamma$ and  $\sigma_\gamma$  are not in general    measures of noncompactness.
Let  $E= \C([0,1], \R)$ and  $M= \{ f_n: n \ge 3 \}  \in {\mathfrak M}_E$, where     \begin{eqnarray*} 
f_n(t)= \left\{
\begin{array}{lll}
0  \quad  &\mbox{for} \  \  0 \le t <  \frac1{2} - \frac1{n},
\\
[.1in]
(t-  \frac1{2} + \frac1{n}) \frac{n}{2 } \quad  \ & \mbox{for} \  \  \frac1{2} - \frac1{n}  \le  t  <   \frac1{2}+ \frac1{n},
\\
[.1in]
 1
  &\mbox{for} \  \  \frac1{2} + \frac1{n} \le t \le  1.
\end{array}
\right.
\end{eqnarray*}
Then    $M \notin {\mathfrak N}_E$ and $\mu_\gamma (M)=\mu_\alpha (M)=\sigma_\gamma (M)=\sigma_\alpha (M)=0$,   so that,  none of  the given set functions  satisfies  condition    (i) of Definition~\ref{MNC}.  
}
\end{remark}

 \section{Compactness in Banach subspaces of  $\B(\Omega, Y)$} \label{B}
Throughout this section,  $\X$   will stand for  a  Banach subspace of~$\B $, possibly $\B$ itself,  endowed  with $\| \cdot \|_\infty$.   Let us introduce the following equicontinuity-type concept in our general setting. 
  \begin{definition} 
 {\rm  
 We say that  a set $M \in {\mathfrak M}_\X$ is 
 {\it extendedly equicontinuous}
  if
for any $  \eps >0$ there  are a finite partition       $ \{ A_1, \dots, A_n\} $ of  $\Omega$
and a finite set $ \{ \varphi_1, \dots, \varphi_m \} $ of functions in $\X$  such that, for all $ f \in M$, 
there is $ j \in \{1, \dots, m\}$  with $ \mbox{diam}( (f - \varphi_j) (A_i) ) \le \eps$  for $ i= 1, \dots, n$.  
}
  \end{definition}
 Next, for a set $M \in {\mathfrak M}_\X$ the introduce the new quantitative characteristic  $\omega_\X (M)$ that  will measure the degree of extended equicontinuity of  $M$. 
 \begin{definition} \label{parametronuovo}
{\rm   We define the set function $\omega_\X: {\mathfrak M}_\X \to [0, + \infty)$ by setting
 \begin{eqnarray*} 
  \begin{split}
\omega_\X (M) = \inf \{ \eps >0: \ & \mbox{there  are a finite partition} \ \{ A_1, \dots, A_n\}  \ \mbox{of} \ \Omega
\\
&  \mbox{and a finite set} \ \{ \varphi_1, \dots, \varphi_m \} \ \mbox{in} \  \X  \ \mbox{such that, for all}  \ f \in M, 
\\
&  \mbox{there is} \ j \in \{1, \dots, m\} \ \mbox{with} \  \mbox{diam}( (f - \varphi_j) (A_i) ) \le \eps \ \mbox{for} \ i= 1, \dots, n\}.
 \end{split}
 \end{eqnarray*} 
 }
 \end{definition}
 Clearly a set $M$ is   extendedly equicontinuous if and only if $\omega_\X (M)= 0$. 
 \\
Let us notice that, for $M \in {\mathfrak M}_\X$, we have
$
 \omega_\B (M) \le \omega_\X (M) \le 2 \omega_\B (M).
$
Indeed, the left inequality is immediate. To show the right one, let $a > \omega_\B (M)$, $\{A_1, \dots, A_n\}$ a finite partition of $\Omega$
and $\{ \varphi_1, \dots, \varphi_m \}$ a finite set in  $\B$  such that, for all $f \in M$
there is $ j \in \{1, \dots, m\}$  with $\mbox{diam}( (f - \varphi_j) (A_i) ) \le  a$ 
for $i= 1, \dots, n$.
Next   for  each $j$, set 
 $ 
 M_j= \{ f \in M :   \mbox{diam}( (f - \varphi_j) (A_i) )  \le a \}
 $
 and choose $\psi_j$ arbitrarily in $M_j$. Then, $\{\psi_1, \dots, \psi_m\}$ is a finite set in $\X$  
  such that, for all $f \in M$
there is $ j \in \{1, \dots, m\}$  with
$  \mbox{diam}( (f - \psi_j) (A_i) ) \le  \mbox{diam}( (f - \varphi_j) (A_i) ) +  \mbox{diam}( (\psi_j- \varphi_j) (A_i) )  \le 2a$  for $i= 1, \dots, n$, as expected.
\begin{proposition} \label{propTB}						
If $\X \subseteq \T$ and  $M \in {\mathfrak M}_\X$, then 
 \[
 \omega(M)= \omega_\X (M).\]
\end{proposition}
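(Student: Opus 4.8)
The plan is to prove the two inequalities $\omega(M)\le\omega_\X(M)$ and $\omega_\X(M)\le\omega(M)$ separately; only the first will actually use the hypothesis $\X\subseteq\T$. Observe at the outset that both quantities are finite, since the trivial partition $\{\Omega\}$ together with the zero function (which lies in the linear subspace $\X$) furnishes the admissible value $\mbox{diam}(M(\Omega))\le 2\sup_{f\in M}\|f\|_\infty<\infty$ for each of $\omega(M)$ and $\omega_\X(M)$.

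The inequality $\omega_\X(M)\le\omega(M)$ is immediate and needs no assumption on $\X$: if $\{A_1,\dots,A_n\}$ is a finite partition of $\Omega$ witnessing a value $\eps>\omega(M)$, i.e. $\mbox{diam}(f(A_i))\le\eps$ for all $f\in M$ and all $i$, then the same partition together with the one-element family $\{\theta\}\subseteq\X$ witnesses the value $\eps$ for $\omega_\X(M)$, since $\mbox{diam}((f-\theta)(A_i))=\mbox{diam}(f(A_i))\le\eps$. Taking infima gives $\omega_\X(M)\le\omega(M)$.

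For the reverse inequality, fix $a>\omega_\X(M)$ and choose a finite partition $\{A_1,\dots,A_n\}$ of $\Omega$ together with functions $\varphi_1,\dots,\varphi_m\in\X$ such that every $f\in M$ admits an index $j=j(f)$ with $\mbox{diam}((f-\varphi_j)(A_i))\le a$ for all $i$. Now fix $\delta>0$. Since $\X\subseteq\T$, each set $\varphi_j(\Omega)$ is totally bounded, hence can be written as a finite disjoint union of sets of diameter at most $\delta$; pulling these decompositions back along the maps $\varphi_j$ and then taking the common refinement of the resulting $m$ finite partitions of $\Omega$ together with $\{A_1,\dots,A_n\}$, I obtain one finite partition $\{C_1,\dots,C_q\}$ of $\Omega$ which refines $\{A_1,\dots,A_n\}$ and on each member of which every $\varphi_j$ oscillates by at most $\delta$, i.e. $\mbox{diam}(\varphi_j(C_l))\le\delta$ for all $j,l$. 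Then, for any $f\in M$ and any $l$, choosing $j=j(f)$ and the index $i$ with $C_l\subseteq A_i$, the decomposition $f=(f-\varphi_j)+\varphi_j$ and subadditivity of the diameter give
\[
\mbox{diam}(f(C_l))\le\mbox{diam}((f-\varphi_j)(C_l))+\mbox{diam}(\varphi_j(C_l))\le\mbox{diam}((f-\varphi_j)(A_i))+\delta\le a+\delta .
\]
Hence $\{C_1,\dots,C_q\}$ witnesses the value $a+\delta$ for $\omega(M)$, so $\omega(M)\le a+\delta$; letting $\delta\downarrow 0$ and then $a\downarrow\omega_\X(M)$ yields $\omega(M)\le\omega_\X(M)$, and combined with the first part this proves $\omega(M)=\omega_\X(M)$.

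I do not expect any genuine obstacle here. The only step requiring a little care is the construction of the common refinement $\{C_1,\dots,C_q\}$: one must disjointify the finite $\delta$-coverings of the totally bounded sets $\varphi_j(\Omega)$, pull them back to finite partitions of $\Omega$, and intersect all of them (together with $\{A_i\}$) into a single finite partition of $\Omega$ on each cell of which all the $\varphi_j$ simultaneously oscillate by at most $\delta$ — this is purely bookkeeping. Everything else reduces to the triangle inequality for the diameter and passage to the infimum.
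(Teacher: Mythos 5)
Your proof is correct and follows essentially the same route as the paper: the easy inequality via the zero function, and the reverse inequality by using total boundedness of the $\varphi_j(\Omega)$ to build a refined partition of $\Omega$ on which each $\varphi_j$ oscillates by at most $\delta$, then splitting $f=(f-\varphi_j)+\varphi_j$. The only cosmetic difference is that the paper covers $\bigcup_j\varphi_j(\Omega)$ by $\delta$-balls (arriving at $a+2\delta$) while you decompose directly into sets of diameter $\le\delta$ (arriving at $a+\delta$); both yield the same conclusion after letting $\delta\downarrow 0$.
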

 \begin{proof}
 Take $\{\varphi_0 \}$, where  $\varphi_0$ denotes the null function in $\X$, as a finite set of functions in the definition of $\omega_\X$, then $\omega_\X (M) \le \omega(M)$.
Now, we prove the reverse inequality.
Let $a > \omega_\X (M)$,     let  $\{A_1, \dots, A_n\}$ be  a finite partition of $\Omega$
and $\{ \varphi_1, \dots, \varphi_m \}$ a finite set in  $\T$  such that, for all $f \in M$
there is $ j \in \{1, \dots, m\}$  such that $\mbox{diam}( (f - \varphi_j) (A_i) ) \le a$ 
for $i= 1, \dots, n$.
Set $T= \bigcup_{j=1}^m \varphi_j (\Omega)$. Since $\varphi_j \in \T$ for $j=1, \dots, m$ we have $\gamma(T)=0$.
Therefore, for $\delta >0$ arbitrarily fixed, there are $y_1, \dots, y_k \in Y$ such that  
$T \subseteq \bigcup_{l=1}^k B(y_l, \delta)$.  Hence $T= \bigcup_{l=1}^k K_l$, where
$K_l= T \cap B(y_l, \delta)$, for $l=1, \dots, k$.
Set $B_l= K_l \setminus (\bigcup_{s=0}
^{\ell-1} K_s) $, for $l=1, \dots,k$, where $K_0= \emptyset$.
Then $\{B_1, \dots, B_k \}$ is a finite partition of $T$, so that for all $j \in \{1, \dots, m \}$ the family 
$\{ \varphi_j^{-1} (B_1), \dots, \varphi_j^{-1} (B_k) \}$ is a finite partition of $\Omega$.
Let $\{S_1, \dots, S_q \}$ be the partition of $\Omega$ generated by the partitions
$\{ \varphi_j^{-1} (B_1), \dots, \varphi_j^{-1} (B_k) \}$ ($j=1, \dots, m $) and by the partition  
 $\{A_1, \dots, A_n\}$.
Therefore,  for all $f \in M$ and for all $r  \in \{ 1, \dots, q  \}$
we  can choose $j \in \{1, \dots, m \}$ such that
 \begin{eqnarray*}
\mbox{diam} ( f(S_r)	)&=& \sup_{x,y \in S_r}  \| f(x)- f(y) \| 
\\
&\le& 
 \sup_{x,y \in S_r}    \| (f- \varphi_j)(x)- (f- \varphi_j)(y) \| +  \sup_{x,y \in S_r}\| \varphi_j(x)- \varphi_j(y) \| 
 \le a + 2 \delta,
\end{eqnarray*}
so that $\omega(M) \le \omega_\X (M)$. 
  \end{proof}
  The set  function $\omega_\X: {\mathfrak M}_\X \to [0, + \infty)$ satisfies axioms (ii)-(v) of Definition~\ref{MNC}. Clearly,  it is not in general  a measure of noncompactness in~$\X$.
 \begin{proposition} \label{(ii)-(v)bis}  Let $M, N \in {\mathfrak M }_\X$ and $\lambda \in [0,1]$. Then 
  \begin{itemize}
 \item[(ii)] $M \subseteq N$ implies $\omega_\X (M) \le \omega_\X(N)$;
  \item[(iii)]  $\omega_\X (\overline{M})=  \omega_\X({M})$;
   \item[(iv)] $\omega_\X (\rm{co}M)= \omega_\X(M)$;
    \item[(v)] $ \omega_\X (\lambda M+ (1-\lambda) N) \le  \lambda \omega_\X (M) + (1-\lambda) \omega_\X (N)$.
 \end{itemize}
 \end{proposition}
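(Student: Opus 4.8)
The plan is to check the four axioms directly from Definition~\ref{parametronuovo}, exploiting the obvious remark that in each of (iii) and (iv) one inclusion already gives ``$\le$'' via (ii), so only the reverse inequality needs an argument. Axiom (ii) is immediate: if $M \subseteq N$, then any $\eps$, finite partition $\{A_1,\dots,A_n\}$ of $\Omega$ and finite set $\{\varphi_1,\dots,\varphi_m\} \subseteq \X$ witnessing $\omega_\X(N) \le \eps$ also witness $\omega_\X(M) \le \eps$, since the defining condition is quantified ``for all $f \in M$''. For (iii) I would fix $a > \omega_\X(M)$ with witnesses $\{A_1,\dots,A_n\}$, $\{\varphi_1,\dots,\varphi_m\}$, take $g \in \overline{M}$ and a sequence $(f_k)_k$ in $M$ with $\|f_k - g\|_\infty \to 0$. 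Each $f_k$ is assigned an index $j_k \in \{1,\dots,m\}$; as this set is finite, some value $j$ recurs infinitely often, so after passing to a subsequence we may assume $\mbox{diam}((f_k - \varphi_j)(A_i)) \le a$ for all $k$ and all $i$. Then for $x,y \in A_i$ the triangle inequality gives $\|(g-\varphi_j)(x)-(g-\varphi_j)(y)\| \le 2\|f_k - g\|_\infty + a$, and letting $k \to \infty$ yields $\mbox{diam}((g-\varphi_j)(A_i)) \le a$; hence the same data witness $\omega_\X(\overline{M}) \le a$, and (iii) follows by letting $a \downarrow \omega_\X(M)$.

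The one genuinely nontrivial point is (iv), the behaviour under the convex hull. The difficulty is that a convex combination $f = \sum_l \lambda_l f_l$ of members of $M$ is naturally approximated not by a single $\varphi_j$ but by the convex combination $\sum_l \lambda_l \varphi_{j_l}$ of the corresponding center functions, and these combinations form an infinite set, so they cannot serve directly as the required \emph{finite} family of centers. The key observation that resolves this is that, for a finite set $\Phi = \{\varphi_1,\dots,\varphi_m\} \subseteq \X$, the convex hull $\mbox{co}\,\Phi$ is the continuous (affine) image of the standard simplex $\Delta \subseteq \R^m$ under $(\lambda_1,\dots,\lambda_m) \mapsto \sum_j \lambda_j \varphi_j$, hence a compact subset of $\X$; therefore it admits, for every $\delta > 0$, a finite $\delta$-net $\{\psi_1,\dots,\psi_N\} \subseteq \mbox{co}\,\Phi \subseteq \X$.

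With this in hand the argument for (iv) goes as follows. Fix $a > \omega_\X(M)$ with witnesses $\{A_1,\dots,A_n\}$ and $\Phi$, and fix $\delta > 0$ with its $\delta$-net $\{\psi_1,\dots,\psi_N\}$. Given $f = \sum_l \lambda_l f_l \in \mbox{co}M$ with $f_l \in M$ carrying center indices $j_l$, put $\psi = \sum_l \lambda_l \varphi_{j_l} \in \mbox{co}\,\Phi$; since $f-\psi = \sum_l \lambda_l (f_l - \varphi_{j_l})$, convexity of the norm gives $\mbox{diam}((f-\psi)(A_i)) \le \sum_l \lambda_l\, a = a$ for each $i$, and choosing $\psi_r$ with $\|\psi - \psi_r\|_\infty \le \delta$ yields $\mbox{diam}((f-\psi_r)(A_i)) \le a + 2\delta$. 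Thus $\{A_1,\dots,A_n\}$ together with $\{\psi_1,\dots,\psi_N\}$ witness $\omega_\X(\mbox{co}M) \le a + 2\delta$, and letting $\delta \to 0$ and then $a \downarrow \omega_\X(M)$ completes (iv).

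Finally, for (v) I would take $a > \omega_\X(M)$, $b > \omega_\X(N)$ with respective witnesses $(\{A_i\},\{\varphi_j\})$ and $(\{A'_{i'}\},\{\varphi'_{j'}\})$, replace the two partitions of $\Omega$ by their common refinement $\{S_1,\dots,S_q\}$, and use as centers the finite family $\{\lambda\varphi_j + (1-\lambda)\varphi'_{j'} : j \le m,\ j' \le m'\} \subseteq \X$. For $h = \lambda f + (1-\lambda)g$ with $f \in M$, $g \in N$, choosing the indices $j$, $j'$ attached to $f$ and $g$ and setting $\psi = \lambda\varphi_j + (1-\lambda)\varphi'_{j'}$, the decomposition $h - \psi = \lambda(f-\varphi_j) + (1-\lambda)(g-\varphi'_{j'})$ together with $\mbox{diam}((f-\varphi_j)(S_r)) \le a$ and $\mbox{diam}((g-\varphi'_{j'})(S_r)) \le b$ gives $\mbox{diam}((h-\psi)(S_r)) \le \lambda a + (1-\lambda)b$ for every $r$; hence $\omega_\X(\lambda M + (1-\lambda)N) \le \lambda a + (1-\lambda)b$, and taking infima over $a$ and $b$ yields the claim. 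The only step demanding real care is the compactness/$\delta$-net argument underlying (iv); the rest is bookkeeping with refinements of partitions and the triangle inequality.
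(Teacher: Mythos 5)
Your proof is correct and follows essentially the same route as the paper's: the crucial step in (iv) is exactly the paper's argument, namely the compactness of $\mbox{co}\{\varphi_1,\dots,\varphi_m\}$ in $\X$ and a finite $\delta$-net from it. The only (harmless) deviations are that in (iii) you use a sequence plus pigeonhole on the finitely many indices instead of a single $\delta$-close $f$, and in (v) you observe that for fixed $\lambda$ the family $\{\lambda\varphi_j+(1-\lambda)\varphi'_{j'}\}$ is already finite, which lets you skip the $\delta$-net the paper uses there.
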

\begin{proof}
Throughout this proof, given $M \in {\mathfrak M}_\X$,    $a > \omega_\X (M)$  
we denote by $\{A_1, \dots, A_n\}$ a finite partition of $\Omega$
and  by $\{ \varphi_1, \dots, \varphi_m \}$ a finite set in  $\X$  
 such that for all $ f\in M$ there is  $ j \in \{1, \dots, m\}$  such that 
$
\mbox{diam}( (f - \varphi_j) (A_i) ) \le a$ for $i=1, \dots,n$.
\\
Property  (ii)  follows immediately  from the definition of $\omega_\X$.

Next, observe that the inequality  $\omega_\X ({M}) \le \omega_\X (\overline{M})$ follows from (ii).
To prove the converse inequality, let $\delta>0$ be arbitrarily fixed and let $g \in \overline{M}$.
 Choose $f \in M$ and 
 $ j \in \{1, \dots, m\}$ such that $\| f-g\|_\infty \le \delta$ 
 and
  $\mbox{diam}( (f - \varphi_j) (A_i) ) \le a$  for $i= 1, \dots, n$.
  Thus, for all $i$,  we have
 \begin{eqnarray*}
  \mbox{diam}( (g - \varphi_j) (A_i) ) &=& \sup_{x,y \in A_i} \| (g- \varphi_j)(x) -  (g- \varphi_j)(y) \| 
\\
& \le &
   \sup_{x,y \in A_i} \left( \| (f- \varphi_j)(x) -  (f- \varphi_j)(y) \| 
   +
    \| g(x)-f(x) \|  
    +
     \| g(y) - f(y) \|    \right) 
     \\
     &  \le &  a +  2 \delta.
   \end{eqnarray*}
   Hence, by the arbitrariness of $\delta$, it follows
     $\mbox{diam}( (g - \varphi_j) (A_i) ) \le a$, and so, by the arbitrariness of~$a$, we obtain
      $\omega_\X(\overline{M}) \le  \omega_\X({M})$.  We have proved (iii).
   
To prove (iv) enough to show $\omega_\X (\rm{co}M)\le \omega_\X(M)$.
  Let $g \in \mbox{co}M$ be arbitrarily fixed. Let $f_1, \dots, f_k \in M$ and $\lambda_1, \dots,  \lambda_k \in [0,1]$ with $\sum_{s=1}^k \lambda_s=1$ such that $g=\sum_{s=1}^k \lambda_s f_s$,
  Let denote $H$ the set of all functions $i \to h(i)$ of $\{1, \dots  k\}$ into $\{1, \dots, m\}$.
 Fix $h \in H$ such that, for all $s \in \{ 1, \dots, k\}$, we have
 \[
 \mbox{diam}( (f_s - \varphi_{h(s)}) (A_i) ) \le a \quad \mbox{for} \ i= 1, \dots, n.
 \]
 We observe that $\mbox{co}\{ \varphi_1, \dots, \varphi_m \}$ is a compact subset of $\X$. Hence, given $\delta>0$, we can choose a finite $\|\cdot\|_\infty$$-\delta$-net $\{ \psi_1, \dots, \psi_l \}$ for 
 $\mbox{co}(\{ \varphi_1, \dots, \varphi_m \})$ in $\X$.
 Then the  function 
$
 \sum_{s=1}^k \lambda_s \varphi_{h(s)}$ belongs to $\mbox{co}(\{ \varphi_1, \dots, \varphi_m \})$,  
 so that there is $v \in \{1, \dots, l\}$ such that 
 \[
 \left\|   \sum_{s=1}^k \lambda_s \varphi_{h(s)}  - \psi_v \right\|_\infty \le \delta.
 \]
 Therefore,  for all $i \in \{1, \dots,n \}$, we obtain
 \begin{eqnarray*}
 \mbox{diam} \left((g- \psi_v)(A_i) \right)&=&
  \sup_{x,y \in A_i}  \left\|    \left(       \sum_{s=1}^k \lambda_s f_s - \psi_v\right)(x) - \left(\sum_{s=1}^k \lambda_s f_s- \psi_  v\right)(y) \right\|
 \\
   & \le &  \sup_{x,y \in A_i}  \Big(  \left\|   \sum_{s=1}^k \lambda_s \left( f_s- \varphi_{h(s)} \right)(x)     
   %%% \right    
   -  \sum_{s=1}^k \lambda_s \left( f_s- \varphi_{h(s)} \right)(y)   
    \right\| 
    \\
  &  +&   
  %%%%% \left   
      \left\|   \sum_{s=1}^k \lambda_s  \varphi_{h(s)}(x) - \psi_v(x) \right\|
      +
     \left\|   \sum_{s=1}^k \lambda_s  \varphi_{h(s)}(y) - \psi_v(y) \right\|   \Big) 
            \le a + 2 \delta.
  \end{eqnarray*}
 By the arbitrariness of $\delta$ and $a$ we have the desired result. So the proof of (iv) is complete.
  
 Finally, we prove (v).
 Given $  N \in {\mathfrak M}_\X$,     $b> \omega_\X(N)$, without loss of generality we may still assume that $\{A_1, \dots, A_n\}$ is a finite partition of $\Omega$
and    $\{ \varphi_1, \dots, \varphi_m \}$ is a finite set in  $\X$  
 such that for all   $g \in N$
there is $  k \in \{1, \dots, m\}$  such that 
$\mbox{diam}( (g - \varphi_k) (A_i) ) \le b$, for $ i= 1, \dots, n.
$
\\
    Let $\lambda \in [0,1]$ and $\z \in  \lambda M+ (1-\lambda) N$,  then  $\z=  \lambda f+ (1-\lambda) g$  \ with $ f \in M$ and $g \in N$. 
   Choose $j,k \in \{ 1, \dots,m \}$ such that
$
     \mbox{diam}( (f - \varphi_j)(A_i) ) \le a  \ \mbox{and}  \ \mbox{diam}( (g- \varphi_k)(A_i) ) \le b
$
     for all $i \in   \{ 1, \dots, n \}$.
     Observe that $\mbox{co} \{ \varphi_1, \dots, \varphi_m \}$ is a compact  set in $\X$. Thus, given $\delta>0$, we can choose a finite $\|\cdot\|_\infty \mbox{-} \delta$-net $\{ \psi_1, \dots, \psi_l \}$ for 
 $\mbox{co}\{ \varphi_1, \dots, \varphi_m \}$ in $\X$.
\\
 Next, let
$s \in \{1, \dots, l\}$ such that $\| \psi_s-  ( \lambda \varphi_j+ (1-\lambda) \varphi_k)\|_\infty \le \delta$. 
We have, for $i \in \{1, \dots,n \}$, 
\begin{eqnarray*}
\mbox{diam}( ( \z &-& \psi_s) (A_i) )  = \sup_{x,y \in A_i} \| ( \z - \psi_s)(x)- ( \z - \psi_s)(y) \|
\\
&=& \sup_{x,y \in A_i} \| (\lambda f+ (1-\lambda)g- \psi_s)(x)- ( \lambda f+ (1-\lambda)g - \psi_s)(y) \|
\\
&\le &
\sup_{x,y \in A_i} \Big(   
\|  \psi_s(x)-  ( \lambda \varphi_j + (1-\lambda) \varphi_k)(x) \|
+ \|  \psi_s(y)-  ( \lambda \varphi_j + (1-\lambda) \varphi_k)(y) \|
\\
&+& \|    \lambda (f - \varphi_j)(x)    -     \lambda (f- \varphi_j)(y) \|
 + \| (1-\lambda) (g- \varphi_k)(x)-  (1-\lambda) (g- \varphi_k)(y) \|
\Big)   
\\
& \le & 2 \delta+ \lambda a + (1-\lambda)b.
\end{eqnarray*}
In virtue of the arbitrariness of $\delta, \ a$ and  $b$  we obtain (v).
\end{proof}
\subsection{MNC equivalent to $\alpha$} 
In this subsection, we prove  equivalent relations in Banach subspaces $\X$ of $\B$ for the Kuratowski measure of noncompactess.
 We establish a criteria of compactness  and a precise formula for the Kuratowski measure of bounded extendedly equicontinuous subsets of the space. The results are new when $\X$ properly contains $\T$.
  \begin{theorem} \label{maintheoremalpha}
 Let  $\X \subseteq \B$ and let   $M \in {\mathfrak M}_\X$. Then
 \begin{equation} \label{estimate2n}
  \max \{ \mu_\alpha(M), \ \frac1{2} \omega_\X (M) \} \le 
 \alpha(M) \le \mu_\alpha (M) + 2\omega_\X (M).
 \end{equation}
 \end{theorem}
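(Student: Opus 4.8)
The plan is to prove the two inequalities in \eqref{estimate2n} separately; the left one is soft, while the right one carries the real content.

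For the lower bound, I would first observe that for each fixed $x\in\Omega$ the evaluation map $f\mapsto f(x)$ is non-expansive from $\X$ into $Y$ (because $\|f(x)-g(x)\|\le\|f-g\|_\infty$), hence it does not increase the Kuratowski measure; thus $\alpha(M(x))\le\alpha(M)$ for every $x$, and taking the supremum gives $\mu_\alpha(M)\le\alpha(M)$. To get $\tfrac12\omega_\X(M)\le\alpha(M)$ I would show $\omega_\X(M)\le2\alpha(M)$: given $a>\alpha(M)$, cover $M$ by finitely many sets $M_1,\dots,M_p$ of diameter at most $a$, pick $\varphi_j\in M_j$, and use the trivial partition $\{\Omega\}$ together with $\{\varphi_1,\dots,\varphi_p\}\subseteq\X$; every $f\in M$ lies in some $M_j$, so $\operatorname{diam}((f-\varphi_j)(\Omega))\le2\|f-\varphi_j\|_\infty\le 2a$, whence $\omega_\X(M)\le 2a$ and, letting $a\downarrow\alpha(M)$, the claim.

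For the upper bound I would fix arbitrary $a>\omega_\X(M)$ and $b>\mu_\alpha(M)$ and aim to show $\alpha(M)\le b+2a$. First choose, from the definition of $\omega_\X$, a finite partition $\{A_1,\dots,A_n\}$ of $\Omega$ and a finite set $\{\varphi_1,\dots,\varphi_m\}\subseteq\X$ such that each $f\in M$ admits an index $j(f)$ with $\operatorname{diam}((f-\varphi_{j(f)})(A_i))\le a$ for all $i$; then pick one point $x_i\in A_i$ for each $i$. Since $\alpha(M(x_i))\le\mu_\alpha(M)<b$, cover each $M(x_i)$ by finitely many sets $Y_{i,1},\dots,Y_{i,p_i}$ of diameter at most $b$. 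For $j\in\{1,\dots,m\}$ and a multi-index $\ell=(\ell_1,\dots,\ell_n)$, let $M_{j,\ell}$ be the set of all $f\in M$ with $\operatorname{diam}((f-\varphi_j)(A_i))\le a$ and $f(x_i)\in Y_{i,\ell_i}$ for every $i$. These finitely many sets cover $M$. The crucial estimate is $\operatorname{diam}(M_{j,\ell})\le b+2a$: for $f,f'\in M_{j,\ell}$ and $x\in A_i$, inserting $(f-\varphi_j)(x_i)$ and $(f'-\varphi_j)(x_i)$ and using the triangle inequality bounds $\|f(x)-f'(x)\|$ by the two oscillation terms $\|(f-\varphi_j)(x)-(f-\varphi_j)(x_i)\|\le a$ and $\|(f'-\varphi_j)(x)-(f'-\varphi_j)(x_i)\|\le a$ (since $x,x_i\in A_i$) plus the middle term $\|f(x_i)-f'(x_i)\|\le\operatorname{diam}(Y_{i,\ell_i})\le b$; as $i$ and $x$ were arbitrary, $\|f-f'\|_\infty\le b+2a$. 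Consequently $M$ is a finite union of sets of diameter $\le b+2a$, so $\alpha(M)\le b+2a$ (directly from the definition of $\alpha$, or via its maximum property), and letting $a\downarrow\omega_\X(M)$, $b\downarrow\mu_\alpha(M)$ completes the argument.

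The heart of the proof is this construction of a finite cover of $M$ with diameter control: the defining property of $\omega_\X$ is exactly what lets one replace the (possibly large) oscillation of $f$ on each $A_i$ by the small oscillation of the correction $f-\varphi_j$ together with a single pinned value $f(x_i)$, whose spread is then governed by $\mu_\alpha$. The points to be careful about are that the partition $\{A_i\}$ and the functions $\varphi_j$ depend on $a$ and so must be frozen before the covers of the $M(x_i)$ are chosen, and the elementary but essential remark that a finite union of sets of diameter at most $c$ has Kuratowski measure at most $c$.
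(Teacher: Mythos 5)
Your proposal is correct and follows essentially the same route as the paper: the same trivial-partition construction with representatives $\varphi_j\in M_j$ for the lower bound, and for the upper bound the same cover of $M$ by the sets $M_{j,\ell}$ pinned at the points $x_i$, with the identical triangle-inequality estimate $\operatorname{diam}(M_{j,\ell})\le b+2a$. The only cosmetic differences are that you phrase $\mu_\alpha(M)\le\alpha(M)$ via non-expansiveness of the evaluation maps (the paper covers $M(x)$ by the $M_i(x)$ explicitly) and that you cover each $M(x_i)$ separately rather than $\bigcup_i M_j(x_i)$ at once.
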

 \begin{proof}
 We prove the left inequality.
 Let $a > \alpha (M)$ and let $\{M_1, \dots, M_n\}$ be a finite cover of $M$ such that $\mbox{diam}(M_i) \le a$ for $i= 1, \dots,n$.
 For all $x \in \Omega$, we have
 $\M(x) \subseteq \bigcup_{i=1}^n M_i(x)$. Moreover, for all $i\in \{1,\dots , n \}$, we have  
 \[
 \mbox{diam}(M_i(x)) = \sup_{f,g \in M_i} \| f(x)-g(x) \| \le 
  \sup_{f,g \in M_i} \| f-g \|_\infty = \mbox{diam}(M_i) \le a.
 \]
 Hence
 $\alpha (M(x)) \le a$ for all $x \in \Omega$, i.e. $\mu_\alpha (M) \le a$.
By the arbitrariness of $a$, we have 
\begin{equation} \label{left3.1}
\mu_\alpha (M) \le \alpha(M).
\end{equation}
 Now, choose $\{\Omega \}$ as a partition of $\Omega$ and $\{\varphi_1, \dots, \varphi_n\}$ as a finite set in $\X$, taking  $\varphi_i \in M_i$ for $i=1, \dots, n$.
 Let $f \in M$, fix $i \in \{1, \dots, n \}$ such that $\|f - \varphi_i\|_\infty \le a$,
 then
 \[
 \mbox{diam}   ((f- \varphi_i)(\Omega)) = 
 \sup_{x,y \in \Omega} \| (f- \varphi_i)(x)-(f- \varphi_i)(y) \|
 \le 2  \| f- \varphi_i\|_\infty \le 2a .
 \]
 Thus we derive that  $\omega_\X (M) \le 2a$, and, by the arbitrariness of $a$,
 \begin{equation} \label{left3.2}
 \omega_\X (M) \le 2 \alpha(M).
 \end{equation}
Then (\ref{left3.1})   and  (\ref{left3.2})  give $\max \{ \mu_\alpha(M), \ \frac1{2} \omega_\X (M) \} \le \alpha (M)$.

Next, we prove the right inequality. 
First let $a > \omega_\X (M)$, 
let $ \{ A_1, \dots, A_n\}$   be a finite partition of  $\Omega$ and $\{ \varphi_1, \dots, \varphi_m \}$
  a finite set in $\X$  such that, for all $ \ f \in M$,  
there is $j \in \{1, \dots, m\}$ such that 
$
 \mbox{diam}( (f - \varphi_j) (A_i) ) \le a \ \mbox{for} \ i= 1, \dots, n.
$
  Set,  for $j=1, \dots , m$,
  $
  M_j= \{ f \in M: \ \mbox{diam}((f- \varphi_j)(A_i)) \le a \ \mbox{for} \ i=1, \dots , n \}$. Fix $x_i \in A_i$ for $i=1, \dots,n$,  observe that for each $j$ we have
\[
\alpha (\bigcup_{i=1}^n M_j(x_i))  = \max_{i=1}^n \alpha (M_j(x_i))  \le   \max_{i=1}^n \alpha (M(x_i)) \le \mu_\alpha  (M).
\]
Then, let $b >  \mu_\alpha(M)$ and, for any fixed $j$,  let $\{ B_1^{j}, \dots, B_{l_j }^{j}\}$ be a finite cover of  \ $\bigcup_{i=1}^n M_j (x_i)$ such that $\mbox{diam}   (B_s^{j}) \le b$ for $s =1, \dots,  l_j$.
Let  $j$ be fixed. Denote by   $H^j$  the set of all functions $h^j: \{1, \dots n\} \to \{ 1, \dots,  l_j\}$, and for $h^j \in H^j$ consider  the set 
 \[
 M_{j,h^j}=\{ f \in M_j :  \   f(x_i) \in B^j_{h^j(i)}, \ \mbox{for} \ i=1,\dots,n \}.
 \]
 Then, $\{ M_{j, h^j}: \ h^j \in H^j  \}$ is a finite cover of $M_j$,  and $\mbox{diam} (M_{j, h^j}) \le b +2a$ for all $j$.
 \\
 In fact, for $f,g \in M_{j,h^j}$,
 we have
 \begin{eqnarray*}
 \| f-g \|_\infty &=& \max_{i=1}^n \sup_{x \in A_i}  \|f(x)-g(x)\|
 \\
 &\le&  \max_{i=1}^n \sup_{x \in A_i}   ( \|f(x_i)-g(x_i ) \| +   \| (f- \varphi_j )(x)- (f- \varphi_j)(x_i) \|  
 \\ &  + &  
  \| (g- \varphi_j)(x)- (g- \varphi_j)(x_i) \|   )
 \le b +2a.
 \end{eqnarray*} 
Since $\{M_j: j=1, \dots,m \}$   is a finite cover of $M$, we infer $ \alpha (M) \le b +2a$, and by the arbitrariness of $a$ and $b$ we get  $\alpha (M) \le \mu_\alpha (M) + 2 \omega_\X (M)$.
 \end{proof}
 
 Now, from the inequalities we have proved, we obtain the following   compactness criterion in~$\X$. 
 \begin{corollary} \label{cor1}
Let $\X \subseteq \B$. A subset $M$ of $\X$ is relatively compact if and only if it is bounded, extendedly equicontinuous and   pointwise relatively compact.
\end{corollary}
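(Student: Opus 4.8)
The plan is to read the corollary off directly from the two-sided estimate (\ref{estimate2n}) of Theorem~\ref{maintheoremalpha}. First I would recall the standard fact that, since $\X$ is a closed (hence complete) subspace of $\B$ and $\alpha$ is a regular measure of noncompactness, a bounded subset $M$ of $\X$ is relatively compact if and only if $\alpha(M)=0$; equivalently, for a bounded set $\overline{M}$ is compact precisely when it is totally bounded, which is exactly the vanishing of the Kuratowski measure. Note also that every $M\in{\mathfrak M}_\X$ is bounded by definition, so ``bounded'' in the statement merely restricts attention to $M\in{\mathfrak M}_\X$, which is the natural domain on which $\alpha$, $\mu_\alpha$ and $\omega_\X$ are all defined.

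For necessity, suppose $M$ is relatively compact. Then $M$ is bounded and $\alpha(M)=0$, so the left inequality in (\ref{estimate2n}), namely $\max\{\mu_\alpha(M),\tfrac12\omega_\X(M)\}\le\alpha(M)$, forces $\mu_\alpha(M)=0$ and $\omega_\X(M)=0$. By the definition of pointwise relative compactness ($\mu_\alpha(M)=0$) and the observation recorded right after Definition~\ref{parametronuovo} ($\omega_\X(M)=0$ if and only if $M$ is extendedly equicontinuous), we conclude that $M$ is pointwise relatively compact and extendedly equicontinuous.

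For sufficiency, assume $M$ is bounded, pointwise relatively compact and extendedly equicontinuous, i.e. $M\in{\mathfrak M}_\X$ with $\mu_\alpha(M)=0$ and $\omega_\X(M)=0$. The right inequality in (\ref{estimate2n}), $\alpha(M)\le\mu_\alpha(M)+2\omega_\X(M)$, then yields $\alpha(M)=0$, and hence $M$ is relatively compact. Combining the two implications gives the equivalence. There is essentially no obstacle here, since the analytic content has already been carried by Theorem~\ref{maintheoremalpha}; the only point that requires care is that completeness of $\X$ is precisely what converts $\alpha(M)=0$ into genuine relative compactness, so the hypothesis that $\X$ is a Banach subspace of $\B$ is genuinely used.
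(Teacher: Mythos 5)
Your proposal is correct and follows exactly the route the paper intends: the corollary is read off from the two-sided estimate (\ref{estimate2n}), with the left inequality giving necessity and the right inequality giving sufficiency, plus the standard fact that in the complete space $\X$ the vanishing of $\alpha$ characterizes relative compactness. Nothing further is needed.
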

  The inequalities (\ref{estimate2n})  reduce, for a class of subsets,  to a precise  formula of Ambrosetti-type.
 \begin{corollary} \label{Ambrosetti}
 Let $\X \subseteq \B$ and assume that  $M \in  {\mathfrak M}_\X $  is extendedly equicontinuous.
 Then  
 \[
 \alpha (M)= \mu_\alpha(M).
 \] 
 \end{corollary}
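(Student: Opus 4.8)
The plan is to obtain the statement as an immediate consequence of Theorem~\ref{maintheoremalpha}, so that no genuinely new argument is required. The starting point is the observation recorded just after Definition~\ref{parametronuovo}: a set $M \in {\mathfrak M}_\X$ is extendedly equicontinuous if and only if $\omega_\X(M) = 0$. Thus the hypothesis translates directly into the vanishing of the quantitative characteristic $\omega_\X$ on $M$.

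With this in hand, I would simply substitute $\omega_\X(M) = 0$ into the two-sided estimate (\ref{estimate2n}). The left-hand side collapses to $\max\{\mu_\alpha(M), \tfrac12 \cdot 0\} = \mu_\alpha(M)$, and the right-hand side collapses to $\mu_\alpha(M) + 2 \cdot 0 = \mu_\alpha(M)$. Hence
\[
\mu_\alpha(M) \le \alpha(M) \le \mu_\alpha(M),
\]
and the squeeze forces $\alpha(M) = \mu_\alpha(M)$. (Here it is harmless to note that $\mu_\alpha(M) < \infty$, since $M \in {\mathfrak M}_\X$ implies $M(x)$ is a bounded subset of $Y$ for each $x$, so $\mu_\alpha(M) \le \mbox{diam}(M) < \infty$; thus the squeeze is genuinely between two finite equal quantities.)

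I do not expect any obstacle: the entire difficulty of the corollary has already been absorbed into the proof of Theorem~\ref{maintheoremalpha}, specifically into the upper estimate $\alpha(M) \le \mu_\alpha(M) + 2\omega_\X(M)$, whose proof builds a finite cover of $M$ by sets of diameter at most $b + 2a$ from a cover witnessing $\omega_\X(M)$ and covers witnessing $\mu_\alpha(M)$ at finitely many sample points $x_i \in A_i$. Once that estimate, together with the trivial lower bound $\mu_\alpha(M) \le \alpha(M)$ coming from $M(x) \subseteq M$ evaluated pointwise, is available, the extendedly equicontinuous case is just the specialization $\omega_\X(M) = 0$. This also recovers the classical Ambrosetti formula when $\X \subseteq \T$, since in that case $\omega_\X = \omega$ by Proposition~\ref{propTB} and extended equicontinuity reduces to ordinary equicontinuity.
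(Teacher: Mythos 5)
Your proposal is correct and matches the paper's intended argument exactly: the corollary is stated as an immediate consequence of Theorem~\ref{maintheoremalpha}, obtained by setting $\omega_\X(M)=0$ in (\ref{estimate2n}) so that both bounds collapse to $\mu_\alpha(M)$. The finiteness remark and the connection to the classical case via Proposition~\ref{propTB} are harmless additions consistent with the paper's discussion.
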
 
 In the general case, we  get a regular measure of noncompactness equivalent to that of  Kuratowski.
  \begin{corollary}  \label{newMNC}   Given  $\X \subseteq \B$, the set function $ \mu_\alpha+ 2 \omega_{\X}:  {\mathfrak M}_\X \to[0, +\infty)$ is a  regular measure of noncompactness in $\X$ equivalent to the Kuratowski measure $\alpha$.
\end{corollary}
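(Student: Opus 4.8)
The plan is to verify the six axioms (i)--(vi) of Definition~\ref{MNC}, together with the sublinearity axiom (vii) and the maximum property (viii), and finally to note that the equivalence with $\alpha$ forces fullness. First I would dispose of axioms (ii)--(v): these follow at once by combining Proposition~\ref{(ii)-(v)} (applied to $\psi=\mu_\alpha$) with Proposition~\ref{(ii)-(v)bis} (applied to $\omega_\X$), since a nonnegative combination of two set functions each satisfying (ii)--(v) again satisfies (ii)--(v). Axiom (iii) then also gives, using $\alpha(\overline M)=\alpha(M)$ and the estimate \eqref{estimate2n}, that $\mu_\alpha+2\omega_\X$ and $\alpha$ have the same kernel up to the equivalence; more directly, from \eqref{estimate2n} one reads off
\[
\tfrac12\bigl(\mu_\alpha(M)+2\omega_\X(M)\bigr)\le \max\{\mu_\alpha(M),\tfrac12\omega_\X(M)\}+\tfrac12\omega_\X(M)\le \alpha(M)+\tfrac12\omega_\X(M)
\]
--- actually the cleaner route is simply: \eqref{estimate2n} yields $\alpha(M)\le \mu_\alpha(M)+2\omega_\X(M)$ and $\mu_\alpha(M)+2\omega_\X(M)\le \alpha(M)+4\alpha(M)=5\alpha(M)$ after bounding $\mu_\alpha\le\alpha$ and $\omega_\X\le 2\alpha$. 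Hence $\mu_\alpha+2\omega_\X$ is equivalent to $\alpha$, and in particular $\ker(\mu_\alpha+2\omega_\X)=\ker\alpha={\mathfrak N}_\X$, which simultaneously establishes axiom (i) and the fullness required for regularity.

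Next I would treat axiom (vi), the generalized Cantor intersection property. Let $(M_k)_k$ be a decreasing sequence of closed sets in ${\mathfrak M}_\X$ with $(\mu_\alpha+2\omega_\X)(M_k)\to 0$. By the equivalence just established, $\alpha(M_k)\to 0$; since $\alpha$ is a genuine measure of noncompactness on the Banach space $\X$ (it satisfies the Cantor property), the intersection $\bigcap_k M_k$ is nonempty. So (vi) is inherited directly from $\alpha$ via the two-sided estimate, with no extra work.

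It remains to prove homogeneity and subadditivity (vii) and the maximum property (viii). For homogeneity, $\mu_\alpha(\lambda M)=|\lambda|\mu_\alpha(M)$ is immediate from $\alpha((\lambda M)(x))=\alpha(\lambda M(x))=|\lambda|\alpha(M(x))$, and $\omega_\X(\lambda M)=|\lambda|\omega_\X(M)$ follows by rescaling a partition/$\{\varphi_j\}$ data set for $M$ at level $\eps$ into one for $\lambda M$ at level $|\lambda|\eps$ (replace each $\varphi_j$ by $\lambda\varphi_j$ and use $\mathrm{diam}((\lambda f-\lambda\varphi_j)(A_i))=|\lambda|\,\mathrm{diam}((f-\varphi_j)(A_i))$); the case $\lambda=0$ is trivial. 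Subadditivity $\mu_\alpha(M+N)\le\mu_\alpha(M)+\mu_\alpha(N)$ comes from $(M+N)(x)\subseteq M(x)+N(x)$ and subadditivity of $\alpha$ on $Y$; subadditivity of $\omega_\X$ is proved exactly as axiom (v) in Proposition~\ref{(ii)-(v)bis} but without the convex weights --- given $\eps$-data $(\{A_i\},\{\varphi_j\})$ for $M$ and $(\{A'_{i'}\},\{\varphi'_{j'}\})$ for $N$, pass to the common refinement $\{A_i\cap A'_{i'}\}$ of the partitions and to the finite family $\{\varphi_j+\varphi'_{j'}\}$; then $\mathrm{diam}((f+g-\varphi_j-\varphi'_{j'})(A_i\cap A'_{i'}))\le\mathrm{diam}((f-\varphi_j)(A_i))+\mathrm{diam}((g-\varphi'_{j'})(A'_{i'}))$ gives the bound. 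Finally, for (viii): "$\ge$" is clear from monotonicity (ii); for "$\le$", note $\mu_\alpha(M\cup N)=\sup_x\alpha(M(x)\cup N(x))=\sup_x\max\{\alpha(M(x)),\alpha(N(x))\}=\max\{\mu_\alpha(M),\mu_\alpha(N)\}$ using the maximum property of $\alpha$ on $Y$, and $\omega_\X(M\cup N)=\max\{\omega_\X(M),\omega_\X(N)\}$ because $\eps$-data for $M$ and for $N$ can be merged (common refinement of partitions, union of the $\varphi$-families) into $\eps$-data for $M\cup N$, after which the displayed inequalities give $(\mu_\alpha+2\omega_\X)(M\cup N)\le\max\{(\mu_\alpha+2\omega_\X)(M),(\mu_\alpha+2\omega_\X)(N)\}$.

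The main obstacle, such as it is, is the verification of the maximum property and subadditivity for $\omega_\X$: one must be careful that merging the auxiliary function families $\{\varphi_j\}$ and $\{\varphi'_{j'}\}$ (by taking all pairwise sums, or simply the union, depending on whether one is handling $M+N$ or $M\cup N$) together with the common refinement of the two partitions still yields a \emph{finite} data set that works uniformly for every $f$ in the bigger set --- but this is exactly the bookkeeping already carried out in Proposition~\ref{(ii)-(v)bis}, so no genuinely new idea is needed. Everything else is a direct transfer from the corresponding property of $\alpha$ on $Y$ through the two-sided estimate \eqref{estimate2n}.
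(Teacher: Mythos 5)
Most of your argument tracks the paper's own proof of this corollary: axioms (ii)--(v) are obtained from Propositions~\ref{(ii)-(v)} and~\ref{(ii)-(v)bis}; axiom (i), axiom (vi) and the equivalence with $\alpha$ all come out of the two-sided estimate of Theorem~\ref{maintheoremalpha} (the paper routes (i) and (vi) through Corollary~\ref{cor1}, which amounts to the same thing); and your explicit verifications of homogeneity and subadditivity of $\mu_\alpha$ and $\omega_\X$, and of the maximum property for each of them \emph{separately} (common refinement of the partitions together with sums, respectively unions, of the $\varphi$-families), are correct and supply details that the paper merely asserts with ``it can be easily verified''.

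The gap is in the very last step of (viii). From $\mu_\alpha(M\cup N)=\max\{\mu_\alpha(M),\mu_\alpha(N)\}$ and $\omega_\X(M\cup N)=\max\{\omega_\X(M),\omega_\X(N)\}$ you conclude that $(\mu_\alpha+2\omega_\X)(M\cup N)\le\max\{(\mu_\alpha+2\omega_\X)(M),(\mu_\alpha+2\omega_\X)(N)\}$, but a sum of maxima is in general strictly larger than the maximum of the sums: $\max\{a_1,a_2\}+2\max\{b_1,b_2\}>\max\{a_1+2b_1,\,a_2+2b_2\}$ whenever the two maxima are attained at different indices and the relevant terms are nonzero. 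This is not a repairable bookkeeping issue. Take $\X=\B([0,+\infty),\ell_\infty)$, let $M$ be the set of constant functions with values in $B(\ell_\infty)$, so that $\omega_\X(M)=0$ and $\mu_\alpha(M)=2$, and let $N$ be the set of Example~\ref{es elle infty}, so that $\mu_\alpha(N)=0$ while $\omega_\X(N)\ge\frac12$ by the right-hand inequality of (\ref{estimate2n}). Then, using the two identities you correctly proved, $(\mu_\alpha+2\omega_\X)(M\cup N)=2+2\omega_\X(N)$, which strictly exceeds $\max\{2,\,2\omega_\X(N)\}=\max\{(\mu_\alpha+2\omega_\X)(M),(\mu_\alpha+2\omega_\X)(N)\}$. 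So the maximum property genuinely fails for $\mu_\alpha+2\omega_\X$. To be fair, the paper's own proof makes exactly the same unjustified leap (``both $\mu_\alpha$ and $\omega_\X$ \dots enjoy the maximum property so the same is true for $\mu_\alpha+2\omega_\X$''), so you have reproduced an existing gap rather than introduced a new one; but as written the step does not follow, and the ``regular'' part of the conclusion --- which by the paper's Definition~\ref{MNC} and the surrounding text includes the maximum property --- is established neither by your argument nor by the paper's.
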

\begin{proof}
From Proposition~\ref{(ii)-(v)}    and Proposition~\ref{(ii)-(v)bis} it follows that  $ \mu_\alpha+ 2 \omega_{\X}$
  satisfies conditions (ii)-(v) of
 Definition~\ref{MNC}. 
Further conditions  (i) and (vi) 
are consequences  of  Corollary~\ref{cor1}.  
 Finally, it can be easily verified that  both $ \mu_\alpha$ and $\omega_{\X}$  are  sublinear and enjoy the maximum property so the same is true for $ \mu_\alpha+ 2 \omega_{\X}$.
  The equivalence follows from Theorem~\ref{maintheoremalpha}.
\end{proof}

We end this subsection looking at Banach subspaces of $\T$. In such a case, the quantitative characteristic $\sigma_\alpha$ can be used to improve the left-hand side  of (\ref{estimate2n}). We need the following lemma.

\begin{lemma} \label{lemma1}      
Let  $\X \subseteq \T$ and let $M \in {\mathfrak M}_\X$. Then
  \[
  \sigma_\alpha (M) \le \alpha (M).
  \]
  \end{lemma}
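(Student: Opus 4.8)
The statement to prove is $\sigma_\alpha(M) \le \alpha(M)$ for $M \in {\mathfrak M}_\X$ with $\X \subseteq \T$. The plan is to exploit a finite cover of $M$ by small-diameter pieces in $\X$ together with the total boundedness of each piece, and combine these two sources of finiteness to cover $M(\Omega)$ by finitely many small sets in $Y$.

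First I would fix $a > \alpha(M)$ and choose a finite cover $\{M_1,\dots,M_n\}$ of $M$ with $\mbox{diam}(M_i) \le a$ for each $i$; without loss of generality each $M_i$ is nonempty, so pick $\varphi_i \in M_i$. For $f \in M_i$ and $x \in \Omega$ we then have $\|f(x) - \varphi_i(x)\| \le \|f - \varphi_i\|_\infty \le \mbox{diam}(M_i) \le a$, hence $M_i(\Omega) \subseteq \varphi_i(\Omega) + a\, B(Y)$. Since $\varphi_i \in \X \subseteq \T$, the set $\varphi_i(\Omega)$ is relatively compact, so $\gamma(\varphi_i(\Omega)) = 0$; thus for any $\delta > 0$ there is a finite $\delta$-net for $\varphi_i(\Omega)$, which yields a finite cover of $\varphi_i(\Omega)$ by sets of diameter at most $2\delta$, and correspondingly a finite cover of $M_i(\Omega)$ by sets of diameter at most $2\delta + 2a$.

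Taking the union over $i = 1,\dots,n$ of these finite covers gives a finite cover of $M(\Omega) = \bigcup_{i=1}^n M_i(\Omega)$ by sets of diameter at most $2\delta + 2a$. Therefore $\alpha(M(\Omega)) \le 2\delta + 2a$. Letting $\delta \to 0$ gives $\sigma_\alpha(M) = \alpha(M(\Omega)) \le 2a$, and letting $a \downarrow \alpha(M)$ would give $\sigma_\alpha(M) \le 2\alpha(M)$ — a factor of $2$ too weak.

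To remove the spurious factor, I would instead pass through the Hausdorff measure: work with a finite $(a/2 + \delta)$-net argument or, more cleanly, estimate $\gamma$ directly. Since $M_i \subseteq B(\varphi_i, a)$ in $\X$, we get $M_i(\Omega) \subseteq \bigcup_{x} B(\varphi_i(x), a)$, and covering $\varphi_i(\Omega)$ by finitely many balls $B(y^i_l, \delta)$ yields $M_i(\Omega) \subseteq \bigcup_l B(y^i_l, a + \delta)$; hence $\gamma(M(\Omega)) \le a + \delta$, so $\sigma_\gamma(M) \le \gamma_\B(M) \cdot$ — but this again loses a constant when converting back to $\alpha$. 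The clean route is to note that $\alpha$ itself can be estimated by covering $M$ finely: given $a > \alpha(M)$, the diameter bound $\mbox{diam}(M_i) \le a$ means that for each fixed $i$, the map $x \mapsto M_i(x)$ has all its values within distance $a$ of the single point set $\{\varphi_i(x)\}$, so $M_i(\Omega)$ lies in the $a$-neighbourhood of the relatively compact set $\varphi_i(\Omega)$; a relatively compact set has a finite cover by sets of arbitrarily small diameter, so $M_i(\Omega)$ has a finite cover by sets of diameter $\le a + \delta$ (thicken each small piece by $a$ in radius but re-cover — the key identity $\alpha(S + rB(Y)) \le \alpha(S) + 2r$ only gives $a + 2\delta$ again). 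The honest main obstacle is precisely this constant: the naive argument yields $\sigma_\alpha(M) \le 2\alpha(M)$, and getting the sharp $\sigma_\alpha(M) \le \alpha(M)$ requires using that each $M_i$ has diameter $\le a$ \emph{in $\X$}, so that $M_i(\Omega)$ is contained in a translate-family $\{\varphi_i(x) + (M_i - \varphi_i)(x)\}$ where $(M_i - \varphi_i)(\Omega)$ has diameter $\le a$ and sits near $\theta$; since $\varphi_i(\Omega)$ is totally bounded one covers it by points $y^i_l$ and then $M_i(\Omega) \subseteq \bigcup_l \bigl(y^i_l + (M_i - \varphi_i)(\Omega) + \delta B(Y)\bigr)$, each summand having diameter $\le a + 2\delta$ — so the factor genuinely needs the total-boundedness to be used at scale $\delta \to 0$ while the diameter contribution stays exactly $a$. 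I would therefore cover $\varphi_i(\Omega)$ to make its contribution negligible and keep only the diameter-$a$ contribution of $M_i$, concluding $\alpha(M(\Omega)) \le a + \delta$ for all $\delta > 0$, hence $\sigma_\alpha(M) \le a$, and finally $\sigma_\alpha(M) \le \alpha(M)$ by the arbitrariness of $a$.
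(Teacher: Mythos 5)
Your closing step does not go through, and the obstacle you honestly flagged is in fact insurmountable. The claim on which your fix rests --- that $(M_i - \varphi_i)(\Omega)$ has diameter at most $a$ --- is false. From $\|f-\varphi_i\|_\infty \le \mbox{diam}(M_i) \le a$ for all $f \in M_i$ you only get that every element $f(x)-\varphi_i(x)$ of $(M_i-\varphi_i)(\Omega)$ lies in $a\,B(Y)$; for two \emph{different} points $x \ne x'$ and functions $f,g \in M_i$ there is no control on $\|(f(x)-\varphi_i(x)) - (g(x')-\varphi_i(x'))\|$ beyond the bound $2a$, because the diameter hypothesis on $M_i$ compares $f$ and $g$ only at the \emph{same} point. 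Hence each summand $y^i_l + (M_i-\varphi_i)(\Omega) + \delta B(Y)$ has diameter $\le 2a+2\delta$, and your argument collapses back to the estimate $\sigma_\alpha(M)\le 2\alpha(M)$ that you correctly identified as too weak.

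Moreover, the factor $2$ cannot be removed, so no repair of this step is possible. Take $\Omega=\N$, $Y=\ell_1$ and $M=\{f_k: k \in \N\}$ with $f_k(k)=e_k$ and $f_k(n)=\theta$ for $n\ne k$ (the set appearing in the example that closes Section~\ref{B}). Each $f_k$ has finite range, so $M \in {\mathfrak M}_{\T}$; one has $\|f_k-f_s\|_\infty=1$ for $k\ne s$, whence $\beta(M)=\mbox{diam}(M)=1$ and $\alpha(M)=1$; but $M(\N)=\{\theta\}\cup\{e_k:k\in\N\}$ contains the infinite $2$-separated set $\{e_k\}$ and has diameter $2$, so $\sigma_\alpha(M)=\alpha(M(\N))=2=2\,\alpha(M)$. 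What your covering (and likewise the paper's own argument, which produces the balls $B(y_j,a)$ and then reads off a diameter of $a$ where a ball of radius $a$ has diameter $2a$) actually establishes is the radius estimate $\sigma_\gamma(M)\le\alpha(M)$, equivalently $\sigma_\alpha(M)\le 2\alpha(M)$, and the example shows this is sharp. So the gap in your proof is not a missing trick: the inequality as stated requires either the extra factor of $2$ or the replacement of $\sigma_\alpha$ by $\sigma_\gamma$ (as in Lemma~\ref{lemmaAT}).
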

\begin{proof}
Let    $M \in {\mathfrak M}_\X$, $a > \alpha (M)$ and let $\{M_1, \dots,M_n \}$  be  a finite cover of $M$ with $\mbox{diam} M_i~\le~a $.
Take  $f_i \in M_i$,  for  $i=1, \dots n$.  Since each $f_i$ is totally bounded we have that $\alpha(\bigcup_{i=1}^n f_i(\Omega))~=~0$. Thus, given $ \eps >0$  arbitrarily fixed, we    choose  a finite cover  $\{B_1, \dots, B_m\}$    of 
$\bigcup_{i=1}^n f_i(\Omega)$ such that $\mbox{diam} (B_j) \le  \eps$, for $j=1,\dots,m$.
Next, for each $j$, fix $y_j \in B_j$.
Now let $f \in M$ and $x \in \Omega$ be arbitrarily fixed.
First choose $i \in \{1,\dots,n \}$ such that $\|f-f_i \|_\infty \le  a$, then $j \in \{ 1, \dots,m\}$ such that $f_i(x) \in B_j$, that is $\|f_i(x)- y_j \|_\infty \le  \eps$. Then
$
\|f(x) - y_j \|_\infty \le a + \eps ,
$
and by the arbitrariness of $\eps$ we have
$
\|f(x) - y_j \|_\infty \le  a,
$
so that  $\{B(y_1, a), \dots B(y_m, a) \}$ is a finite cover of
 $M(\Omega) $.  As $\sigma_\alpha(M)= \alpha (M(\Omega))$, the proof is complete.
\end{proof}
  \begin{theorem} \label{coralpha}
If $\X \subseteq \T$ and  $M \in {\mathfrak M}_\X$,
  then
  \[
  \max \{ \sigma_\alpha(M), \ \frac1{2} \omega (M) \} \le 
 \alpha(M) \le \mu_\alpha (M) + 2\omega (M).  \]
  \end{theorem}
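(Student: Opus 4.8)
The plan is to deduce this statement directly from Theorem~\ref{maintheoremalpha}, Proposition~\ref{propTB}, and Lemma~\ref{lemma1}, with no new combinatorial work needed. The point of the theorem is simply that, on Banach subspaces of $\T$, the quantitative characteristic $\omega_\X$ collapses to $\omega$, and the pointwise characteristic $\mu_\alpha$ in the lower bound can be upgraded to the stronger $\sigma_\alpha$.

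First I would handle the upper bound. Since $\X\subseteq\T$, Proposition~\ref{propTB} gives $\omega_\X(M)=\omega(M)$. Substituting this into the right-hand inequality of \eqref{estimate2n} in Theorem~\ref{maintheoremalpha} yields immediately
\[
\alpha(M)\le \mu_\alpha(M)+2\omega_\X(M)=\mu_\alpha(M)+2\omega(M),
\]
which is exactly the claimed upper estimate.

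For the lower bound I would argue that $\alpha(M)$ dominates each of the two quantities separately. On the one hand, the left-hand inequality of \eqref{estimate2n} together with Proposition~\ref{propTB} gives $\tfrac12\omega(M)=\tfrac12\omega_\X(M)\le\alpha(M)$. On the other hand, Lemma~\ref{lemma1} gives $\sigma_\alpha(M)\le\alpha(M)$ precisely because $\X\subseteq\T$. (Note also that $\mu_\alpha(M)\le\sigma_\alpha(M)$, as recorded in Section~\ref{P}, so the $\mu_\alpha$ term appearing in Theorem~\ref{maintheoremalpha} is automatically subsumed and need not be mentioned.) Taking the maximum of the two bounds gives $\max\{\sigma_\alpha(M),\tfrac12\omega(M)\}\le\alpha(M)$, completing the proof.

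There is essentially no obstacle here: the only care needed is to invoke Proposition~\ref{propTB} on \emph{both} sides of \eqref{estimate2n} and to recall the inequality $\mu_\alpha\le\sigma_\alpha$ so that replacing $\mu_\alpha$ by $\sigma_\alpha$ in the lower estimate is a genuine strengthening rather than a different statement. The substantive content was already carried by Lemma~\ref{lemma1}, whose proof (covering $M(\Omega)$ by balls centred at net points of the totally bounded representatives $f_i$) is the real work.
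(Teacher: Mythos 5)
Your proposal is correct and follows exactly the paper's own argument: the paper likewise combines Proposition~\ref{propTB} ($\omega_\X=\omega$ on subspaces of $\T$), Lemma~\ref{lemma1} ($\sigma_\alpha(M)\le\alpha(M)$), and Theorem~\ref{maintheoremalpha}. Your extra remark that $\mu_\alpha\le\sigma_\alpha$ makes the lower bound a genuine strengthening is a sensible clarification but not a departure from the paper's route.
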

\begin{proof}
  From Lemma \ref{lemma1} and Proposition \ref{propTB} we have $\sigma_\alpha (M) \le \alpha (M)$ and $\omega_\X (M)= \omega (M)$. Hence, Theorem \ref{maintheoremalpha} gives the thesis.
\end{proof}
From the above result we get  that
 $\omega(M)=0$ implies $ \alpha (M)= \mu_\alpha (M)=  \sigma_\alpha (M)$. This  extends   \cite[Lemma 2.2]{A} from the case of sets of $Y$-valued functions defined and continuous on a compact metric space to the case of sets of $Y$-valued functions defined and  totally bounded on a general set~$\Omega$.

\subsection{MNC equivalent to $\gamma$} 
 
 We now provide similar estimates for the Hausdorff measure of noncompactness,   more accurate than those one can derive  from the previous results using the known equivalence between measures of noncompactness and   the involved quantitative characteristics.
   At first, the  following lemma gives   the upper estimate of the Hausdorff measure in~$\B$.
  \begin{lemma} \label{mainlemmagamma}
 Let $M \in {\mathfrak M}_\B$, then
 \begin{equation*}  
   \gamma_\B  (M) \le \mu_\gamma (M) + \omega_\B (M).
\end{equation*}
 \end{lemma}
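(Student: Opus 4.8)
The plan is to prove the inequality $\gamma_\B(M)\le\mu_\gamma(M)+\omega_\B(M)$ directly by constructing an explicit finite $\eps$-net for $M$ in $\B$, given control on $\mu_\gamma$ and $\omega_\B$. Fix $a>\omega_\B(M)$ and $b>\mu_\gamma(M)$; it suffices to produce a finite $(a+b)$-net for $M$, since then $\gamma_\B(M)\le a+b$ and the arbitrariness of $a$ and $b$ yields the claim.

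First I would unwind the definition of $\omega_\B$: there is a finite partition $\{A_1,\dots,A_n\}$ of $\Omega$ and a finite set $\{\varphi_1,\dots,\varphi_m\}$ of functions in $\B$ such that, for every $f\in M$, there is $j=j(f)\in\{1,\dots,m\}$ with $\mathrm{diam}((f-\varphi_j)(A_i))\le a$ for all $i=1,\dots,n$. For each $j$ set $M_j=\{f\in M:\mathrm{diam}((f-\varphi_j)(A_i))\le a\text{ for }i=1,\dots,n\}$; these cover $M$. Next I would pick a point $x_i\in A_i$ for each $i$. Since $b>\mu_\gamma(M)\ge\gamma(M(x_i))=\gamma_Y(M(x_i))$, each $M(x_i)$ has a finite $b$-net in $Y$, say with centers among $y^i_1,\dots,y^i_{p_i}\in Y$; letting $h$ range over all (finitely many) choice functions $i\mapsto h(i)$ into the index sets, I would refine $M_j$ into the finitely many pieces $M_{j,h}=\{f\in M_j: \|f(x_i)-y^i_{h(i)}\|\le b\text{ for all }i\}$. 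The key computation (exactly as in the proof of Theorem~\ref{maintheoremalpha}) is that for $f,g\in M_{j,h}$ and $x\in A_i$,
\[
\|f(x)-g(x)\|\le \|f(x_i)-g(x_i)\|+\|(f-\varphi_j)(x)-(f-\varphi_j)(x_i)\|+\|(g-\varphi_j)(x)-(g-\varphi_j)(x_i)\|\le 2b+2a,
\]
so $\mathrm{diam}(M_{j,h})\le 2a+2b$; but this only gives a Kuratowski-type bound, not the sharper Hausdorff bound we want.

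The main obstacle — and the point where the Hausdorff argument must diverge from the Kuratowski one — is squeezing out the factor of $2$ so as to land at $a+b$ rather than $2(a+b)$. The right way is \emph{not} to bound the diameter of $M_{j,h}$ but to build an explicit net \emph{function}. For each nonempty piece $M_{j,h}$, define $\psi_{j,h}\in\B$ by $\psi_{j,h}(x)=\varphi_j(x)+y^i_{h(i)}$ for $x\in A_i$ (well-defined since $\{A_i\}$ partitions $\Omega$, and bounded since $\varphi_j$ is bounded and only finitely many constants $y^i_{h(i)}$ are added); one should check $\psi_{j,h}\in\B$, which is immediate. Then for any $f\in M_{j,h}$, $x\in A_i$,
\[
\|f(x)-\psi_{j,h}(x)\|\le \|(f-\varphi_j)(x)-(f-\varphi_j)(x_i)\|+\|f(x_i)-\varphi_j(x_i)-y^i_{h(i)}\|\le a+\|(f-\varphi_j)(x_i)-y^i_{h(i)}\|.
\]
Now I must arrange the net point so that $\|(f-\varphi_j)(x_i)-y^i_{h(i)}\|\le b$; this forces the $b$-net at stage $i$ to be chosen for the set $(M-\varphi_j)(x_i)=\{f(x_i)-\varphi_j(x_i):f\in M_j\}$ rather than for $M(x_i)$. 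Since translation is an isometry, $\gamma_Y((M-\varphi_j)(x_i))=\gamma_Y(M(x_i))\le\mu_\gamma(M)<b$, so such a finite $b$-net exists; one should index the $b$-nets by the pair $(j,i)$ accordingly and redefine $M_{j,h}$ using these centers. With this bookkeeping, $\|f-\psi_{j,h}\|_\infty\le a+b$ for all $f\in M_{j,h}$, and $\{\psi_{j,h}\}$ over all nonempty pieces is the desired finite $(a+b)$-net in $\B$. Letting $a\downarrow\omega_\B(M)$ and $b\downarrow\mu_\gamma(M)$ finishes the proof.
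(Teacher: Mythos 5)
Your construction is correct and is essentially the paper's own proof: the paper also builds piecewise net functions $\psi_{j,h^j}(x)=\varphi_j(x)-\varphi_j(x_i)+y^j_{h^j(i)}$ on each $A_i$, which coincide with your $\psi_{j,h}$ after absorbing the translation $-\varphi_j(x_i)$ into the choice of net points (the paper nets $\bigcup_i M_j(x_i)$ directly instead of the translated sets, but this is only bookkeeping). The estimate $\|f-\psi_{j,h}\|_\infty\le a+b$ and the passage to the infimum are identical.
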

 \begin{proof}
Let $a > \w (M)$, 
$ \{ A_1, \dots, A_n\}$   be a finite partition of  $\Omega$ and $\{ \varphi_1, \dots, \varphi_m \}$
  a finite set in $\B$  such that, for all $ \ f \in M$,  
there is $j \in \{1, \dots, m\}$ such that 
$
 \mbox{diam}( (f - \varphi_j) (A_i) ) \le a \ \mbox{for} \ i= 1, \dots, n.
$
Set
\[
M_j= \{ f \in M: \ \mbox{diam}((f- \varphi_j)(A_i)) \le a \ \mbox{for} \ i=1, \dots,n \},  \ \  \mbox{  for}  \      \   j=1, \dots,m.
\]
Now,  for each $i=1, \dots,n$,  let $x_i \in A_i$ be fixed. Then
\[
\gamma (\bigcup_{i=1}^n M_j(x_i)  ) = \max_{i=1}^n  \gamma ( M_j(x_i)  )  \le \max_{i=1}^n \gamma(M(x_i)) \le \mu_\gamma (M).
\]
 Next, let $b> \mu_\gamma (M)$ and  let  $\{y^j_1, \dots, y^j_{k_j} \}$  be a $\|\cdot\|$$\mbox{-}b$-net for 
$\bigcup_{i=1}^n M_j(x_i)$ in $Y$. 
Finally, let   $H^j$ be  the set of all functions $h^j:   \{1, \dots n\} \to   \{1, \dots, k_j \}$,   and for  $h^j  \in H^j$,
 define    $\psi_{j,h^j}: \Omega \to Y$ by setting, for $x \in \Omega$,
\[
\psi_{j,h^j} (x)= \sum_{i=1}^n \chi_{A_i} (x) \left( \varphi_j(x) -  \varphi_j (x_i) + y^j_{h^j(i)} \right).
\]
For each fixed $j$,  we show  that $\{\psi_{j,h^j}: h^j \in H^j\}$ is a $\|\cdot\|_\infty$$\mbox{-}(a+b)$-net for $M_j$ in $\B$. 
To this end,  given $f \in M_j$   choose $h^j \in H^j$  
such that  
$
\|f(x_i)- y^j_{h^j(i)} \| \le b,  \  \mbox{for each} \   i \in \{ 1, \dots, n\}.
$
Then for $x \in \Omega$, by letting   $i \in \{ 1, \dots, n\}$ such that $x \in A_i$,
we have
 \begin{eqnarray*}
 \| f(x)- \psi_{j,h^j} (x) \| &=&  \| f(x) - \varphi_j(x) +  \varphi_j(x_i) -  y^j_{h^j(i)} \|
\\
&
 \le &   \| (f - \varphi_j )(x)  -  (f - \varphi_j )(x_i)   \| + \| f(x_i)  -y^j_{h^j(i)} \| \le a + b.
\end{eqnarray*}
Therefore, we obtain
\begin{eqnarray*}
\| f - \psi_{j, h^j}  \|_\infty &= \max_{i=1}^n \sup_{x \in A_i} \| f(x)- \psi_{j, h^j} (x)\|
 \le a+b. \end{eqnarray*}
Hence the  set $\bigcup_{j=1}^m \{ \psi_{j, h^j}: \ h^j \in H^j\} $   is a finite $\|\cdot\|_\infty$$\mbox{-}(a+b)$-net for $M$ in $\B$.     
The arbitrariness of $a$ and $b$ implies
 $ \gamma_B(M) \le \mu_\gamma (M) + \w(M)$, and the proof  is complete. 
\end{proof}

\begin{theorem} \label{maintheoremgamma}
Let  $\X \subseteq \B$ and let $M \in {\mathfrak M}_\X$. Then
 \begin{equation} \label{estimatesgamma}
\max \{ \mu_\gamma(M), \ \frac1{2} \omega_\X (M) \} \le \gamma_\X(M) \le 2 (\mu_\gamma (M) + \omega_\X (M)).  \end{equation}
 \end{theorem}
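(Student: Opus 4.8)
The plan is to follow the pattern of the proof of Theorem~\ref{maintheoremalpha}, replacing finite covers by sets of small diameter with finite nets, and then to obtain the upper bound directly from Lemma~\ref{mainlemmagamma} together with the two comparisons $\omega_\B(M)\le\omega_\X(M)\le 2\omega_\B(M)$ proved above and the standard comparison between $\gamma_\X$ and $\gamma_\B$.

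For the left-hand inequality I would argue as follows. Fix $a>\gamma_\X(M)$ and a finite $\|\cdot\|_\infty$-$a$-net $\{f_1,\dots,f_k\}$ for $M$ in $\X$. Since $\|f(x)-f_i(x)\|\le\|f-f_i\|_\infty$ for every $x\in\Omega$, the set $\{f_1(x),\dots,f_k(x)\}$ is an $a$-net for $\M(x)$ in $Y$, so $\gamma(\M(x))\le a$ for all $x$, whence $\mu_\gamma(M)\le a$; by the arbitrariness of $a$, $\mu_\gamma(M)\le\gamma_\X(M)$. Using the \emph{same} net and the trivial partition $\{\Omega\}$ of $\Omega$: for each $f\in M$ pick $i$ with $\|f-f_i\|_\infty\le a$, so that $\mbox{diam}((f-f_i)(\Omega))\le 2\|f-f_i\|_\infty\le 2a$; hence $\omega_\X(M)\le 2a$, and therefore $\tfrac12\omega_\X(M)\le\gamma_\X(M)$. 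Taking the maximum gives $\max\{\mu_\gamma(M),\tfrac12\omega_\X(M)\}\le\gamma_\X(M)$.

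For the right-hand inequality I would first record the elementary comparison $\gamma_\X(M)\le 2\gamma_\B(M)$, valid for any Banach subspace $\X$ of $\B$: given a finite $\|\cdot\|_\infty$-$b$-net for $M$ in $\B$, discard the net balls that miss $M$ and replace each remaining center $y_j$ by some $g_j\in M\cap B(y_j,b)\subseteq\X$; then $\{g_j\}$ is a $\|\cdot\|_\infty$-$2b$-net for $M$ in $\X$. (The reverse inequality $\gamma_\B(M)\le\gamma_\X(M)$ is trivial.) Combining this with Lemma~\ref{mainlemmagamma} and with $\omega_\B(M)\le\omega_\X(M)$ yields
\[
\gamma_\X(M)\le 2\gamma_\B(M)\le 2\bigl(\mu_\gamma(M)+\omega_\B(M)\bigr)\le 2\bigl(\mu_\gamma(M)+\omega_\X(M)\bigr),
\]
which is exactly the upper bound in (\ref{estimatesgamma}).

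No step is a genuine obstacle; the only point requiring a little care is the passage between the ambient spaces $\X$ and $\B$ for the Hausdorff measure, handled by the net-translation argument above. It is worth noting that the factor $2$ appearing throughout (\ref{estimatesgamma}) is forced by two sources of loss that were absent in the Kuratowski case: the subspace comparison $\gamma_\X\le 2\gamma_\B$ and the comparison $\omega_\X\le 2\omega_\B$ — the diameter-based characteristic $\alpha$ being, by contrast, insensitive to the ambient space.
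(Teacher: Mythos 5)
Your proposal is correct and follows essentially the same route as the paper: the left inequality is obtained exactly as you describe (evaluating a finite net pointwise to bound $\mu_\gamma$, and using the trivial partition $\{\Omega\}$ with the net as the finite set of functions to bound $\omega_\X$), and the right inequality is deduced from Lemma~\ref{mainlemmagamma} together with $\gamma_\X(M)\le 2\gamma_\B(M)$ and $\omega_\B(M)\le\omega_\X(M)$. The only difference is that you spell out the net-translation argument for $\gamma_\X\le 2\gamma_\B$, which the paper simply invokes as a standard fact.
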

\begin{proof}
We prove the left inequality.  Let $ a > \gamma_\X (M)$ and let $\{\varphi_1, \dots, \varphi_n\}$ be a $\| \cdot \|_\infty$-$a$-net for $M$ in $\X$. Then it is easy to check that $\{\varphi_1(x), \dots, \varphi_n(x)\}$ 
is a $\| \cdot \|$-$a$-net for $M(x)$ in $Y$. Hence
$\sup_{x \in \Omega} \gamma (M(x)) \le a$, and, by the arbitrariness of $a$, we have 
\begin{equation} \label{left1}
\mu_\gamma(M) \le \gamma_\X(M).
\end{equation}
Next, let $f \in M$ be arbitrarily fixed and $j \in \{1, \dots,n \}$ such that $\|  f - \varphi_j\|_\infty \le a$, then
\begin{eqnarray*}
\mbox{diam}((f - \varphi_j)(\Omega)) &=& \sup_{x,y  \in \Omega} \|(f - \varphi_j)(x) - (f - \varphi_j)(y)  \|  
\\
&\le &  2 \|f - \varphi_j \|_\infty  \le 2 a.
\end{eqnarray*}
By the arbitrariness of $a$, in view of the definition of $\omega_\X (M)$, choosing $\{\Omega\}$ as partition of $\Omega$ and  $\{\varphi_1, \dots, \varphi_n\}$ as finite set in $\X$
we have
\begin{equation} \label{left2}
\omega_\X (M) \le 2 \gamma_\X(M).
\end{equation}
Hence  by (\ref{left1}) and (\ref{left2}), the desired result follows.

The right inequality follows from Lemma~\ref{mainlemmagamma},  taking into account  that
$
 \gamma_\X (M) \le 2 \gamma_\B (M)
$
and   $
 \omega_\B (M) \le  \omega_\X (M)$.  
\end{proof}

  The compactness criteria  given in Corollary~\ref{cor1}  could be deduced as well from  Theorem~\ref{maintheoremgamma}.
Moreover we notice that the set function  $ \mu_\gamma   + \omega_\X$  is a  regular measure of noncompactness in $\X$ equivalent to $\gamma_\X$.

 From Lemma~\ref{mainlemmagamma} and  the left hand side of (\ref{estimatesgamma}) we obtain the following theorem.

\begin{theorem} \label{maintheoremgammaBIS}
Let    $M \in {\mathfrak M}_\B$. Then
 \begin{equation*}  
\max \{ \mu_\gamma(M), \ \frac1{2} \omega_\B (M) \} \le \gamma_\B(M) \le  \mu_\gamma (M) + \omega_\B (M). 
 \end{equation*}
 \end{theorem}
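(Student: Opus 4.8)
The plan is to derive the statement directly from results already in hand, namely Lemma~\ref{mainlemmagamma} and Theorem~\ref{maintheoremgamma} specialized to the case $\X = \B$. First I would record the left-hand inequality: taking $\X = \B$ in the left-hand side of \eqref{estimatesgamma} immediately gives $\max\{\mu_\gamma(M),\ \tfrac12 \omega_\B(M)\} \le \gamma_\B(M)$, since $\omega_\X = \omega_\B$ and $\gamma_\X = \gamma_\B$ when $\X = \B$. So the left inequality is literally the $\X = \B$ instance of Theorem~\ref{maintheoremgamma}, and nothing new needs to be proved there.

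For the right-hand inequality, the point is that Lemma~\ref{mainlemmagamma} already gives the \emph{sharp} bound $\gamma_\B(M) \le \mu_\gamma(M) + \omega_\B(M)$, which is strictly better than the factor-$2$ bound $2(\mu_\gamma(M) + \omega_\B(M))$ one would get by blindly setting $\X = \B$ in \eqref{estimatesgamma} (that factor of $2$ there comes from the estimate $\gamma_\X(M) \le 2\gamma_\B(M)$, which is wasteful when $\X$ is all of $\B$). So I would simply invoke Lemma~\ref{mainlemmagamma} directly for the upper bound rather than routing through Theorem~\ref{maintheoremgamma}.

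Putting the two halves together yields
\[
\max \{ \mu_\gamma(M), \ \tfrac1{2} \omega_\B (M) \} \le \gamma_\B(M) \le  \mu_\gamma (M) + \omega_\B (M),
\]
which is exactly the assertion. The proof is therefore a two-line combination, and there is no real obstacle: the substantive work was already done in Lemma~\ref{mainlemmagamma} (the explicit construction of the $(a+b)$-net built from the $\varphi_j$ translated by the $b$-nets of the point-evaluations) and in the left-hand estimate of Theorem~\ref{maintheoremgamma} (reading off point-nets and diameters from an $a$-net of $M$). If anything needs a word of care, it is only the observation that for $\X = \B$ the constants in Theorem~\ref{maintheoremgamma} are not tight on the upper side, which is precisely why one quotes Lemma~\ref{mainlemmagamma} instead; I would state that explicitly so the reader sees why the bound here is better than the general-$\X$ one.
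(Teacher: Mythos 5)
Your proof is correct and matches the paper's own argument exactly: the paper likewise obtains this theorem by combining the sharp upper bound of Lemma~\ref{mainlemmagamma} with the left-hand side of \eqref{estimatesgamma} specialized to $\X=\B$. Your added remark about why one must quote the lemma rather than the factor-$2$ bound from Theorem~\ref{maintheoremgamma} is a sensible clarification but not a departure from the paper's route.
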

 \begin{corollary} \label{Ambrosettibis}
 Let    $M \in  {\mathfrak M}_\B $  and assume $\omega_\B(M)=0$.
 Then  
 \[
 \gamma_\B (M)= \mu_\gamma(M).
 \] 
 \end{corollary}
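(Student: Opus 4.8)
The plan is to derive Corollary~\ref{Ambrosettibis} as an immediate consequence of Theorem~\ref{maintheoremgammaBIS}. First I would invoke Theorem~\ref{maintheoremgammaBIS} with the given $M \in {\mathfrak M}_\B$, which yields the two-sided estimate
\[
\max \{ \mu_\gamma(M), \ \tfrac1{2} \omega_\B (M) \} \le \gamma_\B(M) \le  \mu_\gamma (M) + \omega_\B (M).
\]
Under the hypothesis $\omega_\B(M)=0$, the upper bound collapses to $\gamma_\B(M) \le \mu_\gamma(M)$, while the lower bound gives $\mu_\gamma(M) \le \gamma_\B(M)$ (the term $\tfrac12 \omega_\B(M)$ being zero and hence contributing nothing). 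Combining these two inequalities forces $\gamma_\B(M) = \mu_\gamma(M)$, which is exactly the assertion.

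There is essentially no obstacle here: the corollary is a pure specialization of the theorem obtained by setting one of the two quantitative characteristics to zero, so the ``proof'' is just the observation that the sandwich inequality becomes an equality. If one wanted a self-contained argument not citing Theorem~\ref{maintheoremgammaBIS}, the only nontrivial input would be the upper estimate $\gamma_\B(M) \le \mu_\gamma(M) + \omega_\B(M)$ of Lemma~\ref{mainlemmagamma}, whose proof already appears above and relies on the explicit construction of the net $\{\psi_{j,h^j}\}$; the lower bound $\mu_\gamma(M) \le \gamma_\B(M)$ is the elementary fact (established in the proof of Theorem~\ref{maintheoremgamma}) that evaluating a $\|\cdot\|_\infty$-net at a point $x \in \Omega$ produces a $\|\cdot\|$-net for $M(x)$ in $Y$. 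But since both ingredients are already in hand, the cleanest route is simply to quote Theorem~\ref{maintheoremgammaBIS} and read off the equality.
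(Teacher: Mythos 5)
Your proposal is correct and is exactly the argument the paper intends: the corollary is stated immediately after Theorem~\ref{maintheoremgammaBIS} with no separate proof, precisely because setting $\omega_\B(M)=0$ in the two-sided estimate collapses it to $\mu_\gamma(M)\le\gamma_\B(M)\le\mu_\gamma(M)$. Nothing further is needed.
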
 
Next, we look at the inequalities in $\T$. The following lemma will be useful.
 \begin{lemma} \label{lemmaAT}     
  If  $\X \subseteq \T$ and $M \in {\mathfrak M}_\X$,
  then
  \[
  \sigma_\gamma (M) \le \gamma_\X (M).
  \]
  \end{lemma}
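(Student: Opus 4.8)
The plan is to mimic the proof of Lemma~\ref{lemma1} but working with the Hausdorff measure $\gamma$ and $\|\cdot\|$-nets instead of finite covers by sets of small diameter. Fix $M \in {\mathfrak M}_\X$ with $\X \subseteq \T$, and let $a > \gamma_\X(M)$; choose a finite $\|\cdot\|_\infty$-$a$-net $\{f_1,\dots,f_n\}$ for $M$ in $\X$. Since each $f_i$ is totally bounded, the set $\bigcup_{i=1}^n f_i(\Omega)$ is relatively compact in $Y$, so $\gamma\bigl(\bigcup_{i=1}^n f_i(\Omega)\bigr)=0$. Hence, for an arbitrarily fixed $\eps > 0$, we may pick a finite $\|\cdot\|$-$\eps$-net $\{y_1,\dots,y_m\}$ in $Y$ for $\bigcup_{i=1}^n f_i(\Omega)$.

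The key step is then the net estimate. Let $f \in M$ and $x \in \Omega$ be arbitrary. First choose $i$ with $\|f - f_i\|_\infty \le a$, then choose $j$ with $\|f_i(x) - y_j\| \le \eps$; the triangle inequality gives $\|f(x) - y_j\| \le a + \eps$. Since this holds for every $x \in \Omega$ and the $y_j$ do not depend on $x$, the set $\{B(y_1,a+\eps),\dots,B(y_m,a+\eps)\}$ covers $M(\Omega)$, so $\gamma\bigl(M(\Omega)\bigr) \le a + \eps$. Letting $\eps \to 0$ yields $\gamma\bigl(M(\Omega)\bigr) \le a$, and since $\sigma_\gamma(M) = \gamma\bigl(M(\Omega)\bigr)$, the arbitrariness of $a > \gamma_\X(M)$ gives $\sigma_\gamma(M) \le \gamma_\X(M)$.

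I do not anticipate any serious obstacle; the only point requiring a little care is that the $\eps$-net $\{y_1,\dots,y_m\}$ must be chosen \emph{inside} $Y$ (not merely inside $\bigcup_i f_i(\Omega)$) so that the balls $B(y_j, a+\eps)$ are legitimate members of a $\|\cdot\|$-net witnessing $\gamma\bigl(M(\Omega)\bigr)$ — but since the Hausdorff measure $\gamma = \gamma_Y$ allows nets anywhere in the ambient space, this is automatic. Note also that, once this lemma is combined with the left-hand inequality of Theorem~\ref{maintheoremgamma} (which already gives $\mu_\gamma(M) \le \gamma_\X(M)$ and $\omega_\X(M) \le 2\gamma_\X(M)$), one obtains a sharpened lower bound $\max\{\sigma_\gamma(M), \tfrac12\omega_\X(M)\} \le \gamma_\X(M)$ for subsets of $\T$, the Hausdorff analogue of Theorem~\ref{coralpha}.
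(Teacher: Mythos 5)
Your argument is correct and is essentially the same as the paper's proof: both pass from a finite $\|\cdot\|_\infty$-$a$-net $\{f_1,\dots,f_n\}$ for $M$ to an $\eps$-net for $\bigcup_i f_i(\Omega)$ (which exists since each $f_i$ is totally bounded), and then use the triangle inequality to conclude that these points form an $(a+\eps)$-net for $M(\Omega)$ in $Y$. Your closing remark also matches how the paper uses this lemma in Theorem~\ref{AT}.
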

\begin{proof}
Let $M \in {\mathfrak M}_\X$, $a > \gamma_\X (M)$ and let $\{ \varphi_1, \dots, \varphi_n \}$ be  a $\|\cdot\|_\infty$$\mbox{-}a$-net for $M$ in $\X$.
By hypothesis we have $ \gamma( \bigcup_{i=1}^n \varphi_i(\Omega))=0$, thus 
given  $\eps> 0 $  we find a $\|\cdot\|$-$b$-net $\{\xi_1, \dots, \xi_m\}$   for $ \bigcup_{i=1}^n \varphi_i(\Omega)$ in $Y$.
Then    $\{\xi_1, \dots, \xi_m\}$  is 
a $\|\cdot\|$$\mbox{-}(a+\eps )$-net for $ f(\Omega)$ in $Y$. Indeed, given $f \in M$ and $x \in \Omega$  arbitrarily fixed, choose 
$i \in \{1,\dots,n \}$  such that $\|f(x) -\varphi_i(x)\| \le  a$
and then $j \in \{ 1, \dots,m\}$  such that $\| \varphi_i(x) -\xi_j\| \le  \eps$, so that   
$\|f(x) -\xi_j\| \le  a +\eps$ holds.  Then  $\sigma_\gamma (M) = \gamma (M(\Omega)) \le a+\eps $
 and, by the arbitrariness of $\eps$,  the result follows. 
\end{proof}
  \begin{theorem}  \label{AT} 
If $M \in {\mathfrak M}_\T$, then
  \[
  \max \left\{ \sigma_\gamma (M), \frac1{2} \omega (M) \right\} \le \gamma_\T (M) \le \mu_\gamma (M) + \omega (M).
  \]
  \end{theorem}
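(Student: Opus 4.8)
The plan is to deduce Theorem~\ref{AT} from the general estimates already established for $\gamma_\X$ when $\X \subseteq \T$, together with the auxiliary lemmas on $\sigma_\gamma$ and $\omega$. The key observation is that in the case $\X = \T$ the quantity $\omega_\X$ collapses to the classical non-equicontinuity characteristic $\omega$ by Proposition~\ref{propTB}, and $\sigma_\gamma$ controls $\gamma_\X$ from below by Lemma~\ref{lemmaAT}. So the left-hand inequality should follow almost immediately: from Theorem~\ref{maintheoremgammaBIS} (applied with $\X = \B$, but the estimate $\tfrac12 \omega_\B(M) \le \gamma_\B(M)$ transfers since $\gamma_\B(M) \le \gamma_\T(M)$ and $\omega_\B(M) = \omega_\X(M) = \omega(M)$ here) we get $\tfrac12 \omega(M) \le \gamma_\T(M)$, and from Lemma~\ref{lemmaAT} we get $\sigma_\gamma(M) \le \gamma_\T(M)$; taking the maximum gives the lower bound.

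For the upper bound $\gamma_\T(M) \le \mu_\gamma(M) + \omega(M)$, the natural route is to refine the proof of Lemma~\ref{mainlemmagamma}. That lemma produces an explicit $(a+b)$-net $\{\psi_{j,h^j}\}$ for $M$ inside $\B$; the point to check is that when $M \in {\mathfrak M}_\T$ one can take the net functions $\psi_{j,h^j}$ to lie in $\T$ itself, so that the net is a $\|\cdot\|_\infty$-$(a+b)$-net in $\T$ and hence $\gamma_\T(M) \le \mu_\gamma(M) + \omega(M)$. Recall from the construction that $\psi_{j,h^j}(x) = \sum_{i=1}^n \chi_{A_i}(x)(\varphi_j(x) - \varphi_j(x_i) + y^j_{h^j(i)})$. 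Since by Proposition~\ref{propTB} we may take the functions $\varphi_1,\dots,\varphi_m$ realizing $\omega_\X(M) = \omega(M)$ to be in $\T$ — indeed, in the definition of $\omega$ one uses only the null function, but to run the net-construction one wants the $\varphi_j$'s totally bounded, which is automatic if we simply take $\varphi_j = \varphi_0$ the null function and partitions with diameters controlled by $\omega(M)$ — the range of each $\psi_{j,h^j}$ is contained in a finite union of translates of $\varphi_j(\Omega)$ together with finitely many points $y^j_{h^j(i)}$, hence relatively compact, so $\psi_{j,h^j} \in \T$.

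Concretely, I would argue as follows. Fix $a > \omega(M)$ and, using the definition of $\omega$, a finite partition $\{A_1,\dots,A_n\}$ of $\Omega$ with $\mbox{diam}(f(A_i)) \le a$ for all $f \in M$ and all $i$. Pick $x_i \in A_i$. Fix $b > \mu_\gamma(M)$; since $\gamma(\bigcup_{i=1}^n M(x_i)) = \max_i \gamma(M(x_i)) \le \mu_\gamma(M) < b$, choose a $\|\cdot\|$-$b$-net $\{y_1,\dots,y_k\}$ for $\bigcup_i M(x_i)$ in $Y$. For each $h \in H$, the set of functions $\{1,\dots,n\} \to \{1,\dots,k\}$, define $\psi_h(x) = \sum_{i=1}^n \chi_{A_i}(x)\, y_{h(i)}$, which is a simple function, hence totally bounded, so $\psi_h \in \T$. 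Given $f \in M$, choose $h$ with $\|f(x_i) - y_{h(i)}\| \le b$ for each $i$; then for $x \in A_i$, $\|f(x) - \psi_h(x)\| \le \|f(x) - f(x_i)\| + \|f(x_i) - y_{h(i)}\| \le a + b$, so $\|f - \psi_h\|_\infty \le a+b$. Thus $\{\psi_h : h \in H\}$ is a finite $\|\cdot\|_\infty$-$(a+b)$-net for $M$ in $\T$, giving $\gamma_\T(M) \le a + b$; letting $a \downarrow \omega(M)$ and $b \downarrow \mu_\gamma(M)$ yields the upper bound. The main obstacle — a minor one — is just verifying that the net functions can be taken inside $\T$ rather than merely inside $\B$; with $\varphi_j$ taken to be the null function this reduces to observing that a finitely-valued step function is totally bounded, so there is no real difficulty.
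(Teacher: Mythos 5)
Your upper bound is correct and is exactly the simplification the paper has in mind: since $\omega$ uses no test functions, the net $\psi_h=\sum_i \chi_{A_i}y_{h(i)}$ is finitely valued, hence lies in $\T$, and the estimate $\|f-\psi_h\|_\infty\le a+b$ goes through; likewise $\sigma_\gamma(M)\le\gamma_\T(M)$ via Lemma~\ref{lemmaAT} is the paper's route. The problem is in your derivation of $\tfrac12\omega(M)\le\gamma_\T(M)$. You route it through $\B$ and assert $\omega_\B(M)=\omega_\X(M)=\omega(M)$ for $M\in{\mathfrak M}_\T$. That identity is not available: Proposition~\ref{propTB} gives $\omega_\T(M)=\omega(M)$ (its proof crucially uses that the test functions $\varphi_j$ are totally bounded, so that $\bigcup_j\varphi_j(\Omega)$ can be chopped into small pieces), while for $\B$ one only has the general two-sided bound $\omega_\B(M)\le\omega_\T(M)\le 2\,\omega_\B(M)$. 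The equality $\omega_\B(M)=\omega(M)$ is in fact false in general: take $\Omega=\N$, $Y=\ell_\infty$, and $f_k(n)=e_n$ for $n\le k$, $f_k(n)=\theta$ for $n>k$. Each $f_k$ has finite range, so $M=\{f_k\}\subseteq\T$, and any finite partition of $\N$ has a cell containing two indices, whence $\omega(M)=1$; but with the single test function $\varphi(n)=\tfrac12 e_n\in\B\setminus\T$ and the trivial partition one gets $\mathrm{diam}\bigl((f_k-\varphi)(\N)\bigr)=\tfrac12$ for every $k$, so $\omega_\B(M)\le\tfrac12$. With only $\omega_\B(M)\ge\tfrac12\omega(M)$ at your disposal, your chain $\tfrac12\omega_\B(M)\le\gamma_\B(M)\le\gamma_\T(M)$ yields merely $\tfrac14\omega(M)\le\gamma_\T(M)$, which is weaker than the claimed bound.

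The repair is immediate and is what the paper does: apply the left-hand side of Theorem~\ref{maintheoremgamma} with $\X=\T$, which gives $\tfrac12\omega_\T(M)\le\gamma_\T(M)$ directly, and then replace $\omega_\T(M)$ by $\omega(M)$ using Proposition~\ref{propTB}. With that substitution your proof is complete and coincides with the paper's argument.
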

  \begin{proof}
  By Lemma~\ref{lemmaAT} we have $\sigma_\gamma (M) \le \gamma_\T (M)$.  Proposition~\ref{propTB}  gives $\omega_\T(M)= \omega(M)$, hence the left inequality follows  from     Theorem~\ref{maintheoremgamma}.
 The right inequality can be proved in the same way  as in  Lemma~\ref{mainlemmagamma}, with the simplifications due to the use of the quantitative characteristic $\omega$ instead  of that of  extended equicontinuity.
  \end{proof}
 We point out  that  Theorem \ref{AT}    recovers the result proved in  \cite[Theorem~2.1 and Proposition~3.1]{AT}.  
Moreover, if    $M \in  {\mathfrak M}_\T $  and   $\omega(M)=0$ we have  $\gamma_\T (M)= \mu_\gamma(M) = \sigma_\gamma(M)$.

     Finally,  the  following example shows that given  a set $M$ in ${\mathfrak M}_\T$, in general   $\gamma_\T (M) \ne \gamma_\B (M)$.    
  \begin{example}
  {\rm
  Let $\Omega = \N$, $Y=\ell_1$ (so $\| \cdot \|$ and $\gamma$ denote  the norm and the Hausdorff measure of noncompactness in $\ell_1$, respectively),  and let $\{e_n\}_{n=1}^\infty$  be the standard basis in $\ell_1$.
  We consider the bounded set  $M= \{f_k: \  k=1,2, \dots   \} $ in $\T (\N, \ell_1)$, where  
     \begin{eqnarray*} 
f_k(n)= \left\{
\begin{array}{lll}
e_k   \quad \mbox{for} \  n=k
\\
[.1in]
 0 \quad  \ \mbox{for} \  n \neq k.
\end{array}
\right.
\end{eqnarray*}
 Then we define  $g  : \N \to \ell_1$  by setting  $g(n)= \frac1{2} e_n$ for $n \in \N$; so to have   $g  \in \B  (\N, \ell_1)$, but $g \notin \T  (\N, \ell_1)$.  
 Thus given $k \in \N$, we have 
    \begin{eqnarray*} 
\|f_k-g \|_\infty &= &\sup_{n \in \N} \| f_k(n)- g(n) \|
=
\max \left\{   \| f_k(k)- g(k) \|,     \  \sup\limits_{\small{\substack{
n \in \N
\\
n \neq k }}}  \| f_k(n)- g(n) \| \right\}
\\
&=&
\max \left\{   \left\| e_k - \frac1{2} e_k \right\|,     \  \sup\limits_{\small{\substack{
n \in \N
\\
n \neq k }}}  \left\|  \frac1{2} e_n \right\| \right\}  = \frac1{2}.
\end{eqnarray*}
This shows that  $\{g\}$ is a $\|\cdot\|_\infty$$\mbox{-}\frac1{2}$-net for $M$ in $\B  (\N, \ell_1)$,   thus $\gamma_{\B(\N, \ell_1)} (M) \le \frac1{2}$.
  Next, on the one hand
\[
 \sigma_\gamma (M)  =  \gamma (M(\N)) =    \gamma \left( \cup_{n \in \N} M(n) \right)
=  \gamma \left( \cup_{n \in \N}   \{  e_n,0 \} \right)=1,
 \]
hence  from Corollary~\ref{AT}  it follows 
  $ \gamma_{TB (\N, \ell_1)} (M)   \ge 1$.
 On the other hand, taking into account that $\gamma_{\B (\N, \ell_1)} (M )  \le  \gamma_{\T (\N, \ell_1)}(M )  \le 2 \gamma_{\B (\N, \ell_1)} (M ) $ we infer 
 $\gamma_{\T (\N, \ell_1)} (M) =1$ and   $\gamma_{\B (\N, \ell_1)} (M) = \frac1{2}$. 
 }
  \end{example}
 \section{ Compactness in the spaces $\ \BC^k(\Omega, Y)$ and $\D^k(\Omega, Y)$} \label{k}
Throughout this section, the differentiability of functions is considered in the Fr\' echet sense.
Let us start introducing the spaces of interest.   As before, $Y$ denotes a Banach space with norm $\|\cdot \|$.  Moreover, we assume  that $\Omega$  is  an open set  in a Banach space $Z$  and  that $k \in \N$ is  fixed. 
 The symbol $\C^k(\Omega, Y)$ will stand for the space  of all $Y$-valued functions defined and  $k$-times continuously  differentiable  on $\Omega$.
 For $f \in \C^k(\Omega,Y)$   we denote by $d^pf: \Omega \to \Linear (Z^p,Y)$,  for $p=0, \dots k$, the differential of $f$ of order~$p$, where
$\Linear (Z^p,Y)$ denotes  the  Banach space of all bounded $p$-linear operators endowed with the standard operator norm
$
 \|   \cdot \|_{on}$.
We have $\Linear (Z^0,Y)=Y$, $d^0f=f$ and  $\C^0(\Omega,Y)=\C (\Omega,Y)$.
 We denote, for each $p \in \{0, \dots,k\}$, by  $\W_p$  the spaces $\Linear (Z^p,Y)$, where $ \W_0=Y$,  thus     each $d^pf$  is an element of  the Banach space $\C (\Omega, \W_p)$,
 endowed with the supremum norm. Moreover, we will denote by $\alpha_p$  and $\gamma_p$ ($p=0, \dots,k$), respectively, the Kuratowski  and the Hausdorff  measure of noncompactness in $\W_p$, where $\alpha_0= \alpha$ and $\gamma_0= \gamma$
$\alpha_0= \alpha$  in $Y$.  
Finally, given   a set $M$ in ${\C}^k (\Omega, Y)$ and  $p\in \{0, \dots,k \}$,  we define \begin{equation} \label{Mp}
 M^p= \{d^p f: \ f \in M \}  \subseteq \C(\Omega, \W_p),
 \end{equation} 
where $M^0=M$.
Then,  for  $x \in \Omega$ and  $A \subseteq \Omega$, the sets $ M^p(x) $ and $ M^p(A)$  will be the subsets of $\W_p$ described according to (\ref{Mdix}) by
\[
 M^p(x) = \{d^p f(x)  : \ f \in M \}, \quad  
 M^p(A) = \{d^p f(A): \ x \in A,   \ f \in M \} . 
 \]
 Now we denote by $\BC^k (\Omega,Y)$ the space consisting of  all functions $f \in \C^k (\Omega,Y)$ which are bounded with all differentials up to the order $k$, made into a Banach space by  the norm
 \[
\|f\|_{\BC^k}= \max\{\|f\|_\infty, \|df\|_\infty, \dots, \| d^k f\|_\infty \},
\]
where  $\|d^pf\|_\infty= \sup_{x \in \Omega} \|d^p f(x) \|_{on}$.
\subsection{ Results in $\ \BC^k(\Omega, Y)$}
For   $M \in {\mathfrak M}_{{\BC}^k}$ we define  the following   quantitative characteristics based on those considered in $\B$, precisely
 \[
 {\mu}_{\alphaBCk} (M)= \max_{p=0}^k \mu_{\alpha_p} (M^p), \quad      {\mu}_{\gammaBCk }(M)= \max_{p=0}^k \mu_{\gamma_p} (M^p)
\] 
and also
\[
 \sigma_{\alphaBCk }(M)= \max_{p=0}^k  \sigma_{\alpha_p} (M^p)  ,
\quad            \sigma_{\gammaBCk }(M)= \max_{p=0}^k  \sigma_{\gamma_p} (M^p).
\] 
A subset $M$ is called {pointwise $k$-relatively compact}   
 if  ${\mu}_{\alphaBCk} (M)=0$ {($ {\mu}_{\gammaBCk} (M)=0$)}.
Further, we introduce
 \begin{eqnarray*} 
 \begin{split}
\omega_{\BC^k} (M)=   \inf \{& \eps >0:  \  \mbox{there  are a finite partition} \ \{ A_1, \dots, A_n\}  \ \mbox{of} \ \Omega
\\
&\mbox{and a finite set} \ \{ \varphi_1, \dots, \varphi_m \} \ \mbox{in} \  \BC^k  \ \mbox{such that, for all}  \ f \in M, 
\\
& \mbox{there is} \ j \in \{1, \dots, m\} \ \mbox{with} \    \max_{p=0}^k \  \mbox{diam}_{\W_p} ( d^p(f - \varphi_j)) (A_i) ) \le \eps \ \mbox{for} \ i= 1, \dots, n\}.
\end{split}
 \end{eqnarray*}
We call  $M$ extendedly $k$-equicontinuous  
 if $\omega_{\BC^k} (M)=0$.
  If $\V$ denotes a   Banach subspace of $\BC^k (\Omega,Y)$ equipped with the same norm, then  $\omega_\V$  is defined as above taking $ \{ \varphi_1, \dots, \varphi_m \} \ \mbox{in} \  \V  $.
Note that for    $k=0$ all these quantities  evidently reduce to  the corresponding quantities introduced in Section \ref{P} and considered in the space $\BC (\Omega,Y)$.
  We are ready to prove the following equivalent relations in the space $\BC^k (\Omega,Y)$. 
The proof  is substantially different from that of    Theorem~\ref{maintheoremalpha}, although it follows the same steps of it.
  \begin{theorem} \label{Theorem in Ck}
Let  $M \in {\mathfrak M}_{{\BC}^k}$, then
 \begin{equation} \label{estimatesCk}
  \max \{ {\mu}_{\alphaBCk} (M), \ \frac1{2} \omega_{{\BC}^k} (M) \} \le 
 \alpha (M) \le  {\mu}_{\alphaBCk} (M) + 2\omega_{{\BC}^k} (M).
\end{equation}
 \end{theorem}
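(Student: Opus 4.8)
The plan is to mirror the proof of Theorem~\ref{maintheoremalpha}, replacing the single supremum norm estimate by a coordinated estimate over all differentials $d^0,\dots,d^k$, and using that a set in $\BC^k$ is relatively compact if and only if each of its differential sets $M^p$ is relatively compact in $\C(\Omega,\W_p)$ with the supremum norm.

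\textbf{Left inequality.} For the bound $\mu_{\alphaBCk}(M)\le\alpha(M)$, I would fix $a>\alpha(M)$ and a finite cover $\{M_1,\dots,M_n\}$ of $M$ with $\mathrm{diam}_{\BC^k}(M_i)\le a$. For each $p\in\{0,\dots,k\}$ and each $x\in\Omega$ one has $M^p(x)\subseteq\bigcup_i M_i^p(x)$ and $\mathrm{diam}_{\W_p}(M_i^p(x))=\sup_{f,g\in M_i}\|d^pf(x)-d^pg(x)\|_{on}\le\|f-g\|_{\BC^k}\le a$, so $\alpha_p(M^p(x))\le a$; taking suprema over $x$ and the maximum over $p$ gives $\mu_{\alphaBCk}(M)\le a$. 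For the bound $\tfrac12\omega_{\BC^k}(M)\le\alpha(M)$, I would take the partition $\{\Omega\}$ and the finite set $\{\varphi_1,\dots,\varphi_n\}\subseteq\BC^k$ with $\varphi_i\in M_i$; for $f\in M$ pick $i$ with $\|f-\varphi_i\|_{\BC^k}\le a$, so that $\max_{p=0}^k\mathrm{diam}_{\W_p}(d^p(f-\varphi_i)(\Omega))\le 2\sup_p\|d^p(f-\varphi_i)\|_\infty=2\|f-\varphi_i\|_{\BC^k}\le 2a$, whence $\omega_{\BC^k}(M)\le 2a$. Letting $a\to\alpha(M)$ yields both bounds.

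\textbf{Right inequality.} Here is where the argument genuinely differs from Theorem~\ref{maintheoremalpha}: knowing $\mathrm{diam}_{\W_p}(d^p(f-\varphi_j)(A_i))$ is small for every $p$ and $i$ controls, on each $A_i$, the oscillation of every differential, but to pass from oscillation on the pieces $A_i$ to a genuine $\BC^k$-diameter bound I need anchor values $d^pf(x_i)$ for a chosen $x_i\in A_i$ \emph{for every order $p$ simultaneously}. So I would fix $a>\omega_{\BC^k}(M)$ with associated partition $\{A_1,\dots,A_n\}$ and finite set $\{\varphi_1,\dots,\varphi_m\}\subseteq\BC^k$; set $M_j=\{f\in M:\max_{p}\mathrm{diam}_{\W_p}(d^p(f-\varphi_j)(A_i))\le a,\ i=1,\dots,n\}$, fix $x_i\in A_i$, and note $\alpha_p\bigl(\bigcup_{i=1}^n M_j^p(x_i)\bigr)=\max_i\alpha_p(M_j^p(x_i))\le\mu_{\alpha_p}(M^p)\le\mu_{\alphaBCk}(M)$ for each $p$. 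Then for $b>\mu_{\alphaBCk}(M)$, for each $j$ and each $p$ I choose a finite cover $\{B^{j,p}_1,\dots,B^{j,p}_{l_{j,p}}\}$ of $\bigcup_{i=1}^n M_j^p(x_i)$ of diameter $\le b$ in $\W_p$; indexing over the finite set $H^j$ of all maps assigning to each pair $(i,p)$ a ball index, the sets $M_{j,h^j}=\{f\in M_j:\ d^pf(x_i)\in B^{j,p}_{h^j(i,p)}\ \text{for all }i,p\}$ form a finite cover of $M_j$. For $f,g\in M_{j,h^j}$ and any $p$, on each $A_i$,
\begin{align*}
\|d^pf(x)-d^pg(x)\|_{on}
&\le \|d^pf(x_i)-d^pg(x_i)\|_{on}\\
&\quad+\|d^p(f-\varphi_j)(x)-d^p(f-\varphi_j)(x_i)\|_{on}\\
&\quad+\|d^p(g-\varphi_j)(x)-d^p(g-\varphi_j)(x_i)\|_{on}
\le b+2a,
\end{align*}
so $\|f-g\|_{\BC^k}=\max_p\max_i\sup_{x\in A_i}\|d^pf(x)-d^pg(x)\|_{on}\le b+2a$, i.e. $\mathrm{diam}_{\BC^k}(M_{j,h^j})\le b+2a$. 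Since $\{M_j\}$ covers $M$, $\alpha(M)\le b+2a$, and letting $a\to\omega_{\BC^k}(M)$, $b\to\mu_{\alphaBCk}(M)$ finishes the proof.

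\textbf{Main obstacle.} The delicate point is the bookkeeping for the right inequality: the index set $H^j$ must account for all $k+1$ orders of differentiation at once (so that a single membership in $M_{j,h^j}$ simultaneously pins down $d^0f,\dots,d^kf$ near their anchor values), and one must be careful that $\varphi_j\in\BC^k$ is used in all $k+1$ telescoping estimates with the \emph{same} $j$ — this is exactly why the partition/finite-set structure of $\omega_{\BC^k}$ is built with a common $j$ valid for every $p$. Everything else is a routine adaptation of the $k=0$ case, and the same scheme applies verbatim to a Banach subspace $\V\subseteq\BC^k$ with $\omega_\V$ in place of $\omega_{\BC^k}$.
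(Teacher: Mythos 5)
Your proof is correct and follows essentially the same route as the paper's: the same left-inequality argument via a cover of $\BC^k$-diameter $\le a$ and the choice $\{\Omega\}$, $\varphi_i\in M_i$, and the same right-inequality bookkeeping with anchor points $x_i\in A_i$, covers of $\bigcup_i M_j^p(x_i)$ for each order $p$, and index maps over all pairs $(i,p)$ (the paper writes these as tuples $h^j=(h^j_0,\dots,h^j_k)$, which is the same thing). The telescoping estimate $\|d^pf(x)-d^pg(x)\|_{on}\le b+2a$ with a common $j$ across all orders matches the paper's computation exactly.
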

 \begin{proof}
 We prove the left inequality.
 Let $a > \alpha (M)$ and let $\{M_1, \dots, M_n\}$ be a finite cover of $M$ such that $\mbox{diam}(M_i) \le a$ for $i= 1, \dots,n$.
 For all $x \in \Omega$, we have  $\M^p(x) \subseteq \bigcup_{i=1}^n M^p_i(x)$, for $p=0,\dots,k$.
 Then,
\begin{eqnarray*}
 \mbox{diam}_{\W_p} (M^p_i(x)) &=& \sup_{f,g \in M_i} \| d^pf(x)-d^pg(x) \|_{on} \le 
  \sup_{f,g \in M_i} \|d^p  f- d^p g \|_\infty 
   \\
&\le&    \sup_{f,g \in M_i} \| f- g \|_{\BC^k} 
  \\
  &= & \mbox{diam}_{{\BC}^k      }(M_i) \le  a.
 \end{eqnarray*}
  Hence
 $\alpha_p (M^p(x)) \le a$ for all $x \in \Omega$,  which implies $\mu_{\alphaBCk} (M) \le a$.
By the arbitrariness of $b$, we have 
\begin{equation} \label{left3.10}
\mu_{\alphaBCk} (M)  \le \alpha (M).
\end{equation}
 Next, choose $\{\Omega \}$ as a finite partition of $\Omega$ and $\{\varphi_1, \dots, \varphi_n\}$
  as a finite set in ${\BC}^k$ , where  $\varphi_i \in M_i$ for $i=1, \dots, n$.
 Let $f \in M$, fix $i \in \{1, \dots, n \}$ such that $\|f - \varphi_i\|_{{\BC}^k} \le a$.
 Then
 \begin{eqnarray*}
\max_{p=0}^k  \mbox{diam}_{\W_p}   ((d^p f- d^p\varphi_i)(\Omega)) & = &
 \max_{p=0}^k \sup_{x,y \in \Omega} \| d^p(f- \varphi_i)(x)-d^p(f- \varphi_i)(y) \|_{on}
\\
& \le & 2 \max_{p=0}^k  \| f- \varphi_i\|_{{\BC}^k}\le 2a.
 \end{eqnarray*}
Hence $\omega_{{\BC}^k} (M) \le 2a$, and, by the arbitrariness of $a$,
 \begin{equation}  \label{left3.11}
 \omega_{{\BC}^k} (M) \le 2 \alpha (M).
 \end{equation}
 Combining  (\ref{left3.10}) and (\ref{left3.11}), we obtain the left   inequality  of (\ref{estimatesCk}). 

 Now, let us show the right inequality.
  Let $a > \omega_{{\BC}^k} (M)$, 
$ \{ A_1, \dots, A_n\}$    a finite partition of  $\Omega$ and $\{ \varphi_1, \dots, \varphi_m \}$
  a finite set in ${\BC}^k(\Omega, Y)$  such that, for all $ \ f \in M$,  
there is $j \in \{1, \dots, m\}$ such that 
\[
\max_{p=0}^k \mbox{diam}_{\W_p}( (d^p(f - \varphi_j)) (A_i) ) \le a \ \mbox{for} \ \ \  i= 1, \dots, n.
 \]
For each $j=1,\dots,m$ set
 \[
 M_j= \{ f \in M: \   \max_{p=0}^k \mbox{diam}_{\W_p}  ( (d^p(f - \varphi_j)) (A_i) )  \le a \ \mbox{for} \ i=1, \dots,n \}. 
  \]
Fix $x_i \in A_i$ for $i=1, \dots,n$. Then for $p=   0, \dots,k$
\[
\alpha ( \bigcup_{i=1}^n M_j^p(x_i) ) \le \alpha ( \bigcup_{i=1}^n M^p(x_i) ) = \max_{i=1}^n \alpha_p (M^p(x_i)) \le \mu_{\alpha_p}  (M^p)  \le \mu_{\alphaBCk} (M).
\]
 Moreover, let $b >   \mu_{\alphaBCk}(M)$ and,  for each $p= 0 , \dots, k$, let $\{ B^j_1, \dots, B^j_{l_{p(j)}} \}$ be
  a finite cover of  \ $\bigcup_{i=1}^n M_j^p(x_i)$ such that $\mbox{diam} \  B^j_s \le b$ for $s =1, \dots, l_{p(j)}$.
\\
   Let $j \in \{ 1,\dots,m \}$ be arbitrarily fixed, and   let  $H^j$ be  the set of all functions $h^j=(h^j_0,\dots ,h^j_k)$    where,  
for  each $p\in \{0, \dots k \}$, $h^j_p$ maps      $\{1, \dots n\}   \  \mbox{into}  \  \ \{1, \dots, l_{p(j) }\}  $.
Set 
 \[
 M_{j,h^j}=\{ f \in M_j : \mbox{for} \ p=0, \dots,k, \ d^p f(x_i) \in B^j_{h^j_p (i)}, \ \mbox{for} \ i=1,\dots, n \}.
 \]
 Then $\{ M_{j,h^j}: \ h^j \in H^j  \}$ is a finite cover of $M_j$ in $\BC^k$. Moreover, for all $h^j \in H^j$, we have 
 \begin{eqnarray*}
 \mbox{diam}_{\BC^k}  (M_{j,h^j})  & = &\sup_{f,g \in M_{j, h^j}}  \| f-g \|_{{\BC}^k} = \sup_{f,g \in M_{j,h^j} }\max_{p=0}^k \| d^p(f-g) \|_\infty
 \\
 &= &
\max_{p=0}^k   \sup_{f,g \in M_{j,h^j} } \|d^p( f-g) \|_\infty .
 \end{eqnarray*}
Now, for  all $ p\in \{ 0, \dots k\}$ and  for all  $f,g \in M_{j,h^j} $, we have
 \begin{eqnarray*}
 \| d^pf-d^pg \|_\infty &= &\max_{i=1}^n \sup_{x \in A_i}  \|d^pf(x)-d^pg(x)\|_{on}
 \\
 &\le & \max_{i=1}^n \sup_{x \in A_i}   (  \| d^p(f- \varphi_j)(x)- d^p(f- \varphi_j)(x_i) \| 
 \\ 
 && \ \ + 
  \| d^p(g- \varphi_j)(x)- d^p(g- \varphi_j)(x_i) \| 
+ \|d^pf(x_i)-d^pg(x_i ) \|  )  
  \\
  & \le & 2   \max_{p=0}^k \mbox{diam}_{\W_p}( (d^p(f - \varphi_j)) (A_i) )  +  \max_{i=1}^n \sup_{x \in A_i}  \|d^pf(x_i)-d^pg(x_i ) \|
   \le   b +2a.
    \end{eqnarray*}
  Therefore, $ \mbox{diam}_{\BC^k}  (M_{j,h^j})  \le b+2a$ so that $ \alpha (M_j) \le b+2a$.
  Since $\{M_j: j=1, \dots,m \}$   is a finite cover of $M$, 
 we get
$ \alpha (M) \le b+2a$ 
  and by the arbitrariness of $a$ and $b$ we find  $\alpha (M) \le \mu_{\alphaBCk } (M) + 2 \omega_{{\BC}^k} (M)$.
 \end{proof}

We obtain the following  new criterion of compactness in   $\BC^k(\Omega, Y)$.
\begin{corollary} \label{compactness in BCk}
A  subset $M$ of $\BC^k (\Omega,Y)$ is relatively compact if and only if it is bounded, extendedly $k$-equicontinuous and pointwise $k$-relatively compact.
 \end{corollary}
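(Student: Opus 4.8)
The plan is to obtain the corollary directly from Theorem~\ref{Theorem in Ck}, in complete analogy with the way Corollary~\ref{cor1} follows from Theorem~\ref{maintheoremalpha}. The one ingredient I would invoke beforehand is that $\BC^k(\Omega,Y)$, being a Banach space, is a complete metric space, so for $M\in{\mathfrak M}_{\BC^k}$ one has $\alpha(M)=0$ if and only if $M$ is relatively compact; moreover every relatively compact subset is bounded, so the boundedness requirement is automatically subsumed under relative compactness. Given this, the proof reduces to a two-line reading of the inequalities in (\ref{estimatesCk}).

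First I would treat the ``only if'' direction: if $M\subseteq\BC^k(\Omega,Y)$ is relatively compact, then $M$ is bounded and $\alpha(M)=0$, so the left-hand inequality of (\ref{estimatesCk}) forces $\mu_{\alphaBCk}(M)=0$ and $\omega_{\BC^k}(M)=0$, that is, $M$ is pointwise $k$-relatively compact and extendedly $k$-equicontinuous. For the converse, I would start from a bounded, extendedly $k$-equicontinuous, pointwise $k$-relatively compact set $M$; by the very definitions of the quantitative characteristics this means $\mu_{\alphaBCk}(M)=0$ and $\omega_{\BC^k}(M)=0$, and then the right-hand inequality of (\ref{estimatesCk}) gives $\alpha(M)\le\mu_{\alphaBCk}(M)+2\,\omega_{\BC^k}(M)=0$, whence $\alpha(M)=0$ and $M$ is relatively compact.

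There is no genuine obstacle in this step: all the analytic content has already been packaged into Theorem~\ref{Theorem in Ck}, and what remains is only the standard characterization of relative compactness by the vanishing of the Kuratowski measure in a complete space, together with the tautological fact that $\mu_{\alphaBCk}$ and $\omega_{\BC^k}$ vanish precisely on pointwise $k$-relatively compact and extendedly $k$-equicontinuous sets, respectively.
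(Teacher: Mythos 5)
Your proposal is correct and is essentially the proof the paper intends: the corollary is stated as an immediate consequence of Theorem~\ref{Theorem in Ck}, obtained exactly by combining the two inequalities in (\ref{estimatesCk}) with the fact that $\alpha(M)=0$ characterizes relative compactness in the Banach space $\BC^k(\Omega,Y)$ and that $\mu_{\alphaBCk}$ and $\omega_{\BC^k}$ vanish precisely on the pointwise $k$-relatively compact and extendedly $k$-equicontinuous sets, respectively. Your remark that boundedness is needed even to apply the theorem (since the quantitative characteristics are defined only on ${\mathfrak M}_{\BC^k}$) is accurate and matches the role it plays in the paper's Corollary~\ref{cor1}.
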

 In the literature  (see, for example,  \cite{iraniano, BCW,BMR, cianciaruso})  there are results that  characterize compactness in  $\BC^k(\Omega,Y)$, or proper subspaces of it,
for particular  $\Omega$ or $Y$. 
 Let us now  consider, for example,  the case $Z= \R$. Then each space   $W_p=\Linear (Z^p,Y)$, for $p= 0, \dots,k$,   can be identified with $Y$ itself. 
  Therefore,  whenever  $\Omega$ is an open subset of $\R$ and $M$ a bounded subset of $\BC^k (\overline{\Omega}, Y)$, 
   the estimates (\ref{estimatesCk})  hold with   $  {\mu}_{\alphaBCk} (M)= \max_{p=0}^k \mu_{\alpha} (M^p)$.  We also notice that in the definition of   $\omega_{\BC^k}(M)$ , in such a case,  all diameters will be actually calculated in $Y$. 
  Hence we obtain the following new criterion of compactness in $\BC^k (\overline{\Omega}, Y)$,  which in particular  recovers the case  $\BC^k ([0, + \infty), Y)$(cf.~\cite{cianciaruso}).
 \begin{corollary}~\label{cianc}
 Let $\Omega$ be an open  subset of $\R$ and let $M$ be a subset of  $\BC^k  (\overline{\Omega}, Y)$.
Then $M$ is relatively compact if and only if   each $M^p$   for $p \in \{0, \dots,k \}$  is bounded and pointwise  relatively compact and $M$ is extendedly $k$-equicontinuous. 
 \end{corollary}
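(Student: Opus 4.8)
The plan is to derive this statement as the special case $Z=\R$ of Corollary~\ref{compactness in BCk}, so the work is entirely in re-expressing the three conditions of that corollary. First I would recall that when $Z=\R$ each coefficient space $\W_p=\Linear(\R^p,Y)$ is canonically isometrically isomorphic to $Y$, so that under this identification $\alpha_p=\alpha$ and $\gamma_p=\gamma$ for every $p\in\{0,\dots,k\}$, and every diameter appearing in the definition of $\omega_{\BC^k}$ is computed in $Y$. Consequently $\mu_{\alphaBCk}(M)=\max_{p=0}^k\mu_{\alpha}(M^p)$, while $\omega_{\BC^k}(M)$ keeps its definition verbatim. This is precisely the reduction already announced in the discussion preceding the statement, so Theorem~\ref{Theorem in Ck} and Corollary~\ref{compactness in BCk} apply without change in the setting $M\in{\mathfrak M}_{\BC^k(\overline\Omega,Y)}$.

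Next I would translate the hypotheses. Boundedness of $M$ in $\BC^k(\overline\Omega,Y)$ is, directly from $\|f\|_{\BC^k}=\max\{\|f\|_\infty,\|df\|_\infty,\dots,\|d^kf\|_\infty\}$, equivalent to the boundedness of every $M^p$ in $\C(\overline\Omega,Y)$. Pointwise $k$-relative compactness of $M$, i.e.\ $\mu_{\alphaBCk}(M)=0$, reads $\max_{p=0}^k\mu_{\alpha}(M^p)=0$, that is $\mu_\alpha(M^p)=0$ for every $p$, which is exactly the pointwise relative compactness of each $M^p$. Extended $k$-equicontinuity of $M$ is by definition $\omega_{\BC^k}(M)=0$, i.e.\ the remaining hypothesis. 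Hence the conjunction of the three conditions in Corollary~\ref{compactness in BCk} coincides with the conjunction of the conditions in the present statement, and the equivalence with relative compactness follows at once. Alternatively, one can read the same conclusion off the estimate (\ref{estimatesCk}): relative compactness of $M$ amounts to $\alpha(M)=0$, which by (\ref{estimatesCk}) is equivalent to $\mu_{\alphaBCk}(M)=\omega_{\BC^k}(M)=0$, and then the three conditions are recovered as above, boundedness being implicit in $M\in{\mathfrak M}_{\BC^k}$.

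There is essentially no obstacle here; the only point deserving a word is purely notational, namely that $\overline\Omega$ denotes the closure of an open subset of $\R$, for which $\BC^k(\overline\Omega,Y)$ and the characteristics $\mu_\alpha(M^p)$, $\omega_{\BC^k}(M)$ make sense exactly as in Section~\ref{k}. Since that is the framework already fixed in the discussion, no further argument is needed, and the case $\Omega=[0,+\infty)$ of \cite{cianciaruso} is obtained simply by specializing to that particular $\overline\Omega$.
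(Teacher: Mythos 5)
Your proposal is correct and follows exactly the route the paper takes: the corollary is obtained by specializing Corollary~\ref{compactness in BCk} (equivalently, the estimates (\ref{estimatesCk})) to $Z=\R$, identifying each $\W_p=\Linear(\R^p,Y)$ with $Y$ so that $\mu_{\alphaBCk}(M)=\max_{p=0}^k\mu_\alpha(M^p)$ and all diameters in $\omega_{\BC^k}$ are computed in $Y$, and unpacking boundedness of $\|\cdot\|_{\BC^k}$ as boundedness of every $M^p$. No discrepancy with the paper's (implicit) argument.
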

Going back to the general case, from  (\ref{estimatesCk}) we obtain an Ambrosetti-type formula also in the space $\BC^k(\Omega, Y)$,
and a regular measure of noncompactness equivalent to that of  Kuratowski.
  \begin{corollary} 
Let  $M \in {\mathfrak M}_{{\BC}^k}$ be  extendedly $k$-equicontinuous.
 Then
\begin{equation*}  
 \alpha (M) = {\mu}_{\alphaBCk} (M).
\end{equation*}
 \end{corollary}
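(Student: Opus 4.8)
The plan is to read the equality off directly from the two-sided estimate of Theorem~\ref{Theorem in Ck}. By hypothesis $M$ is extendedly $k$-equicontinuous, which by definition means $\omega_{\BC^k}(M)=0$. Substituting this into~(\ref{estimatesCk}) collapses both bounds onto ${\mu}_{\alphaBCk}(M)$: the left-hand side becomes $\max\{{\mu}_{\alphaBCk}(M),0\}={\mu}_{\alphaBCk}(M)$, while the right-hand side becomes ${\mu}_{\alphaBCk}(M)+2\cdot 0={\mu}_{\alphaBCk}(M)$. Thus
\[
{\mu}_{\alphaBCk}(M)\le\alpha(M)\le{\mu}_{\alphaBCk}(M),
\]
which forces $\alpha(M)={\mu}_{\alphaBCk}(M)$.

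There is essentially no obstacle to overcome here, since all of the work resides in Theorem~\ref{Theorem in Ck}. The delicate ingredient is its upper bound $\alpha(M)\le{\mu}_{\alphaBCk}(M)+2\omega_{\BC^k}(M)$, proved by selecting a finite $\|\cdot\|_{\BC^k}$-net of $\mbox{co}\{\varphi_1,\dots,\varphi_m\}$ together with a common refinement of the partitions of $\Omega$ attached to the sets $M_j$, and then extracting from these data a finite cover of $M$ of arbitrarily small $\|\cdot\|_{\BC^k}$-diameter; the lower bound is the elementary comparison $\mu_{\alphaBCk}(M)\le\alpha(M)$ and $\omega_{\BC^k}(M)\le 2\alpha(M)$. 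The present corollary is nothing but the degenerate case $\omega_{\BC^k}(M)=0$ of that theorem, in exact analogy with the way Corollary~\ref{Ambrosetti} — the Ambrosetti-type formula $\alpha(M)=\mu_\alpha(M)$ for extendedly equicontinuous subsets of a Banach subspace of $\B(\Omega,Y)$ — was derived from Theorem~\ref{maintheoremalpha}. Accordingly I would simply invoke Theorem~\ref{Theorem in Ck} with $\omega_{\BC^k}(M)=0$ and write a one-line proof.
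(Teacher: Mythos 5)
Your proof is correct and is exactly the route the paper takes: the corollary is stated as an immediate consequence of Theorem~\ref{Theorem in Ck}, obtained by setting $\omega_{\BC^k}(M)=0$ in the two-sided estimate~(\ref{estimatesCk}), which squeezes $\alpha(M)$ between ${\mu}_{\alphaBCk}(M)$ and ${\mu}_{\alphaBCk}(M)$. Nothing further is needed.
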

  \begin{corollary}  \label{newMNCbis}   The set function   $ {\mu}_{\alphaBCk} + 2\omega_{{\BC}^k}:  {\mathfrak M}_{{\BC}^k} \to[0, +\infty)$ is a  regular measure of noncompactness in $\BC^k(\Omega, Y)$ equivalent to the Kuratowski measure $\alpha$.
\end{corollary}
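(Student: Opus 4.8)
The plan is to follow, step by step, the proof of Corollary~\ref{newMNC}, now with $\mu_{\alphaBCk}$, $\omega_{{\BC}^k}$, Theorem~\ref{Theorem in Ck} and Corollary~\ref{compactness in BCk} playing the roles of $\mu_\alpha$, $\omega_\X$, Theorem~\ref{maintheoremalpha} and Corollary~\ref{cor1}. The one new structural ingredient is that, for each $p\in\{0,\dots,k\}$, the differentiation map $f\mapsto d^pf$ is a linear operator of norm at most one from $\BC^k(\Omega,Y)$ into $\BC(\Omega,\W_p)$, since $\|d^pf\|_\infty\le\|f\|_{{\BC}^k}$. Consequently, for $M,N\in\mathfrak{M}_{{\BC}^k}$ and $\lambda\in[0,1]$ one has the identities $(\lambda M+(1-\lambda)N)^p=\lambda M^p+(1-\lambda)N^p$ and $(\mathrm{co}\,M)^p=\mathrm{co}(M^p)$ (by linearity) together with the inclusion $\overline{M}^{\,p}\subseteq\overline{M^p}$ (by continuity).

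First I would verify that $\mu_{\alphaBCk}$ and $\omega_{{\BC}^k}$ both satisfy axioms $(ii)$--$(v)$ of Definition~\ref{MNC}. For $\mu_{\alphaBCk}$, recalling that $\mu_{\alphaBCk}(M)=\max_{p=0}^k\mu_{\alpha_p}(M^p)$, this reduces to Proposition~\ref{(ii)-(v)} applied, with $\alpha$ replaced by $\alpha_p$, to each family $M^p$ in $\C(\Omega,\W_p)$, combined with the identities and the inclusion above and with the elementary remark that a finite maximum of set functions satisfying $(ii)$--$(v)$ again satisfies $(ii)$--$(v)$. For $\omega_{{\BC}^k}$ one repeats verbatim the four arguments in the proof of Proposition~\ref{(ii)-(v)bis}, the only modifications being that the single diameter $\mathrm{diam}((f-\varphi_j)(A_i))$ is everywhere replaced by $\max_{p=0}^k\mathrm{diam}_{\W_p}(d^p(f-\varphi_j)(A_i))$ and $\|\cdot\|_\infty$ by $\|\cdot\|_{{\BC}^k}$; the $\delta$-net steps go through unchanged because $\mathrm{co}\{\varphi_1,\dots,\varphi_m\}$ is still a compact subset of $\BC^k(\Omega,Y)$.

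Next come axioms $(i)$ and $(vi)$ together with fullness. The kernel of $\mu_{\alphaBCk}+2\omega_{{\BC}^k}$ consists exactly of the $M\in\mathfrak{M}_{{\BC}^k}$ with $\mu_{\alphaBCk}(M)=\omega_{{\BC}^k}(M)=0$, i.e.\ the bounded, pointwise $k$-relatively compact and extendedly $k$-equicontinuous sets, which by Corollary~\ref{compactness in BCk} is precisely $\mathfrak{N}_{{\BC}^k}$; this gives $(i)$ and $\mathrm{ker}=\mathfrak{N}_{{\BC}^k}$ at once. For $(vi)$, if $(M_n)_n$ is a decreasing sequence of closed members of $\mathfrak{M}_{{\BC}^k}$ with $(\mu_{\alphaBCk}+2\omega_{{\BC}^k})(M_n)\to 0$, then $\alpha(M_n)\to 0$ by the right-hand inequality of Theorem~\ref{Theorem in Ck}, whence $\bigcap_n M_n\ne\emptyset$ because $\alpha$ is a measure of noncompactness on the Banach space $\BC^k(\Omega,Y)$.

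Finally, sublinearity, the maximum property and equivalence. Each $\mu_{\alpha_p}$ is homogeneous, subadditive and has the maximum property (the same elementary verification as for $\mu_\alpha$ in the proof of Corollary~\ref{newMNC}, now carried out in $\W_p$), and so is $\omega_{{\BC}^k}$ (scaling the $\varphi_j$ yields homogeneity, and refining partitions yields subadditivity and the maximum property, exactly as for $\omega_\X$); since finite maxima and the operation $(\psi_1,\psi_2)\mapsto\psi_1+2\psi_2$ preserve all three properties, $\mu_{\alphaBCk}+2\omega_{{\BC}^k}$ is sublinear with the maximum property. The equivalence with $\alpha$ is then immediate from Theorem~\ref{Theorem in Ck}: its right-hand inequality gives $\alpha\le\mu_{\alphaBCk}+2\omega_{{\BC}^k}$, while its left-hand inequality gives $\mu_{\alphaBCk}\le\alpha$ and $\omega_{{\BC}^k}\le 2\alpha$, so that $\mu_{\alphaBCk}+2\omega_{{\BC}^k}\le 5\alpha$. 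I expect the only delicate point to be the bookkeeping in porting Proposition~\ref{(ii)-(v)bis} to the multi-index setting: one must check that replacing a single diameter by the maximum of the diameters of all differentials up to order $k$ leaves every triangle-inequality estimate and the compactness of $\mathrm{co}\{\varphi_1,\dots,\varphi_m\}$ intact, which it does because all the bounds involved are linear.
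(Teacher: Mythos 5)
Your proposal is correct and follows essentially the same route the paper intends: the corollary is stated without proof precisely because it is the $\BC^k$ analogue of Corollary~\ref{newMNC}, whose proof combines the axioms (ii)--(v) for the two quantitative characteristics, the compactness criterion for (i) and (vi), sublinearity and the maximum property, and the two-sided inequality of the main theorem. Your additional bookkeeping (linearity of $f\mapsto d^pf$ giving $(\mathrm{co}\,M)^p=\mathrm{co}(M^p)$ and $\overline{M}^{\,p}\subseteq\overline{M^p}$, and deducing (vi) from $\alpha(M_n)\le(\mu_{\alphaBCk}+2\omega_{{\BC}^k})(M_n)\to 0$) just makes explicit what the paper leaves implicit, including the same unelaborated assertion that the combination $\psi_1+2\psi_2$ inherits the maximum property.
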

 \begin{remark} \label{rem}
 {\rm
 If $\V$ is a Banach  subspace of $\BC^k(\Omega, Y)$, Theorem \ref{Theorem in Ck} and subsequent Corollaries hold true in $\V$.  
 }
 \end{remark}       
 Now we focus  our attention on  the Banach space $\TBC^k(\Omega, Y)$  consisting of all   functions $f \in \BC^k(\Omega, Y)$ which are compact with all differentials  up to the order $k$. The following remark shows that  the hypothesis that each $d^pf$ ($p= 1, \dots, k$) is compact is not redundant.
\begin{remark}
{\rm
It is well known (see, for instance, \cite{DS}) that if $f \in \BC^k(\Omega, Y)$ is a compact function, then  for each $x \in \Omega$ the differentials $d^pf(x)$ of $f$ at $x$, for $p \in \{0, \dots k \}$,  are  compact  linear operators.
On the other hand, there are compact functions  $f \in \BC^k(\Omega, Y)$  such that  the functions $d^pf: \Omega \to \Linear (Z^p,Y)$ are not compact.  For example, let us consider 
 $Z =\R$, $ \Omega = \bigcup_{n=1}^\infty I_n$ with $I_n=\left( n - \frac1{2n}, n + \frac1{2n}\right)$, $Y= \ell_1$ and  $\{e_n\}_{n=1}^\infty$ the standard basis in $\ell_1$.
Then we define   $f \in \BC^1(\Omega, \ell_1)$   by setting
\[  
f(x)=
 (x-n) e_n \quad  \mbox{for} \ x \in I_n  \ \   ( n=1,2 \dots).
\]
Clearly $f$ is a compact function.  On the other hand, since
$
df(x)=   e_n $ if $ x \in I_n$, we deduce $df (\Omega)=\{e_1, \dots, e_n, \dots \}$ and this shows that  $df$
is not compact.
}
\end{remark} 
Now given $M \in {\mathfrak M}_{{\TBC}^k}$,  we  define $\overline{\omega}(M) $  extending  the definition  of  $\omega$ given in (\ref{omega}). We set
 \begin{eqnarray*} 
 \begin{split}
\overline{\omega} (M)=   \inf \{ \eps >0:  \ & \mbox{there  are a finite partition} \ \{ A_1, \dots, A_n\}  \ \mbox{of} \ \Omega
\\
&\ \mbox{such that, for all}  \ f \in M,  \ 
 \max_{p=0}^k \  \mbox{diam}_{\W_p} ( d^p(f (A_i) ) \le \eps \ \mbox{for} \ i= 1, \dots, n  \},
 \end{split}
 \end{eqnarray*} 
By the definition, it is immediate to see that
 \begin{equation} \label{omega segnato}
 \overline{\omega} (M) = \max_{p=0}^k \omega       (M^p).
 \end{equation}
Next, repeating the  arguments of  Proposition \ref{propTB}, given  $M \in {\mathfrak M}_{{\TBC}^k}$ we find
$
  \omega_{{\TBC}^k} (M) =  \overline{\omega} (M).
$
Arguing similarly as  in Lemma \ref{lemma1},  given $M \in {\mathfrak M}_{{\TBC}^k}$   we can prove $ \sigma_{\overline{\alpha}} (M^p) \le \alpha (M)$.
 Therefore, we obtain the following result as consequence of Theorem~\ref{Theorem in Ck}.
 \begin{theorem}
 Let  $M \in {\mathfrak M}_{{\TBC}^k}$.
 Then 
   \begin{equation*}.  
  \max \{ \sigma_{\alphaBCk} (M), \ \frac1{2} \overline{\omega}(M) \} \le 
 \alpha_{{\TBC}^k} (M) \le  {\mu}_{\alphaBCk} (M) + 2 \overline{\omega}(M).
\end{equation*}
 \end{theorem}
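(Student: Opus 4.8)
The plan is to deduce the statement by combining Theorem~\ref{Theorem in Ck}, applied inside the Banach subspace $\TBC^k(\Omega,Y)$ of $\BC^k(\Omega,Y)$ (legitimate by Remark~\ref{rem}), with the two facts recorded just above the statement: for $M\in{\mathfrak M}_{\TBC^k}$ one has $\omega_{\TBC^k}(M)=\overline{\omega}(M)$, and $\sigma_{\alpha_p}(M^p)\le\alpha(M)$ for every $p\in\{0,\dots,k\}$. Since the Kuratowski measure of a subset of a closed subspace agrees with the one computed in the ambient space, $\alpha_{\TBC^k}(M)=\alpha(M)$, so Theorem~\ref{Theorem in Ck} gives
\[
\max\Bigl\{\mu_{\alphaBCk}(M),\ \tfrac1{2}\,\omega_{\TBC^k}(M)\Bigr\}\le\alpha(M)\le\mu_{\alphaBCk}(M)+2\,\omega_{\TBC^k}(M).
\]
Substituting $\omega_{\TBC^k}(M)=\overline{\omega}(M)$ already yields the right-hand inequality of the theorem and the bound $\tfrac1{2}\overline{\omega}(M)\le\alpha(M)$; since also $\sigma_{\alphaBCk}(M)=\max_{p}\sigma_{\alpha_p}(M^p)\le\alpha(M)$, taking the maximum gives $\max\{\sigma_{\alphaBCk}(M),\tfrac1{2}\overline{\omega}(M)\}\le\alpha(M)$, which is the left-hand inequality.

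To keep the argument self-contained I would reprove the two auxiliary facts. The equality $\omega_{\TBC^k}(M)=\overline{\omega}(M)$ is obtained exactly as Proposition~\ref{propTB}: the inequality $\omega_{\TBC^k}(M)\le\overline{\omega}(M)$ is trivial (use the null function as the finite set of functions), and for the converse one fixes $a>\omega_{\TBC^k}(M)$ with a witnessing partition $\{A_1,\dots,A_n\}$ and functions $\varphi_1,\dots,\varphi_m\in\TBC^k$, notes that $\bigcup_{p=0}^k\bigcup_{j=1}^m d^p\varphi_j(\Omega)$ is relatively compact, splits $\Omega$ (for $\delta>0$) into finitely many pieces on which each $d^p\varphi_j$ oscillates by at most $\delta$, and passes to the common refinement $\{S_1,\dots,S_q\}$ with $\{A_1,\dots,A_n\}$: for each $f\in M$ and $r$ there is $j$ with $\mbox{diam}_{\W_p}(d^pf(S_r))\le\mbox{diam}_{\W_p}(d^p(f-\varphi_j)(S_r))+\mbox{diam}_{\W_p}(d^p\varphi_j(S_r))\le a+2\delta$ for all $p$, whence $\overline{\omega}(M)\le a+2\delta$. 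The inequality $\sigma_{\alpha_p}(M^p)\le\alpha(M)$ is obtained exactly as Lemma~\ref{lemma1}: fix $a>\alpha(M)$ and a finite cover $\{M_1,\dots,M_n\}$ of $M$ with $\mbox{diam}_{\BC^k}(M_\ell)\le a$, pick $f_\ell\in M_\ell$; since $f_\ell\in\TBC^k$, $\bigcup_\ell d^pf_\ell(\Omega)$ is relatively compact, hence covered by finitely many balls $B(y_s,\eps)$ in $\W_p$; for $f\in M$, $x\in\Omega$, choosing $\ell$ with $\|f-f_\ell\|_{\BC^k}\le a$ (so $\|d^pf(x)-d^pf_\ell(x)\|_{on}\le a$) and then $s$ with $\|d^pf_\ell(x)-y_s\|_{on}\le\eps$ gives $\|d^pf(x)-y_s\|_{on}\le a+\eps$, so letting $\eps\to0$ the balls $B(y_s,a)$ cover $M^p(\Omega)$ and $\alpha_p(M^p(\Omega))\le a$; the maximum over $p$ and the arbitrariness of $a$ finish this point.

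The proof is thus little more than bookkeeping once Theorem~\ref{Theorem in Ck} is available; the only place that needs genuine care is the equality $\omega_{\TBC^k}(M)=\overline{\omega}(M)$, where all $k+1$ differential orders must be handled simultaneously when forming the common refinement of partitions. It is also worth noting that both auxiliary facts use the full strength of the definition of $\TBC^k$ — namely that \emph{all} differentials up to order $k$ are compact, not just $f$ itself (cf.~the Remark preceding the statement, which shows this hypothesis cannot be dropped).
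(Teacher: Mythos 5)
Your proposal is correct and follows exactly the route the paper itself takes: the paper derives this theorem as a consequence of Theorem~\ref{Theorem in Ck} (valid in the Banach subspace $\TBC^k$ by Remark~\ref{rem}) after noting that $\omega_{\TBC^k}(M)=\overline{\omega}(M)$ by repeating the argument of Proposition~\ref{propTB} and that $\sigma_{\alpha_p}(M^p)\le\alpha(M)$ by arguing as in Lemma~\ref{lemma1}. Your reproofs of these two auxiliary facts correctly adapt those arguments to the $k$ differential orders, so nothing is missing.
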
 
Observe that, if $\Omega$ is an open bounded   subset of $\R^n$, then $\TBC^k (\overline{\Omega}, Y)= \BC^k   (\overline{\Omega}, Y)=    \C^k  (\overline{\Omega}, Y)$.
  \begin{corollary}
If $\Omega$ be an open bounded subset of $\R^n$ and let $M \in    {\mathfrak M}_{\C^k  (\overline{\Omega}, Y)}$.
 Then 
   \begin{equation*}      
  \max \{ {\sigma}_{\alphaBCk} (M), \ \frac1{2} \overline{\omega}(M) \} \le 
 \alpha_{{\C}^k} (M) \le  {\mu}_{\alphaBCk}  (M) + 2 \overline{\omega}(M).
\end{equation*}
 \end{corollary}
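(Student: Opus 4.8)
The plan is to deduce the statement from the theorem proved just above for bounded subsets of $\TBC^k$, once the two function spaces in play are identified. First I would observe that, since $\Omega$ is a bounded open subset of $\R^n$, its closure $\overline{\Omega}$ is closed and bounded, hence compact by the Heine--Borel theorem. For any $f \in \C^k(\overline{\Omega}, Y)$ and any $p \in \{0,\dots,k\}$, the map $d^p f : \overline{\Omega} \to \W_p$ is continuous on a compact set, so $d^p f(\overline{\Omega})$ is a compact subset of $\W_p$; in particular it is bounded and relatively compact. Thus $f$ is bounded together with all its differentials up to order $k$ and each $d^p f$ is a compact function, that is, $f \in \TBC^k(\overline{\Omega}, Y)$. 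Since the reverse inclusion is trivial and the norms coincide, this yields the equality $\C^k(\overline{\Omega}, Y) = \TBC^k(\overline{\Omega}, Y)$ of Banach spaces already recorded above.

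With this identification in hand the rest is immediate: ${\mathfrak M}_{\C^k(\overline{\Omega},Y)} = {\mathfrak M}_{\TBC^k(\overline{\Omega},Y)}$, and $\alpha_{\C^k}(M) = \alpha_{\TBC^k}(M)$ since both measure noncompactness in the same ambient space. I would then apply the preceding theorem to $M$, regarded as a bounded subset of $\TBC^k(\overline{\Omega},Y)$, to get exactly
\[
\max\left\{\sigma_{\alphaBCk}(M),\ \frac1{2}\,\overline{\omega}(M)\right\}\le \alpha_{\C^k}(M)\le \mu_{\alphaBCk}(M)+2\,\overline{\omega}(M),
\]
which is the asserted inequality.

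I do not expect any real obstacle. The single point worth an explicit word is that continuity on the compact set $\overline{\Omega}$ forces the range of each $d^p f$ to be relatively compact --- it is the continuous image of a compact set --- so that the $\BC^k$-norm is indeed finite on the whole of $\C^k(\overline{\Omega},Y)$; all the substantive work has already been carried out in the proof of the $\TBC^k$ theorem, and the present statement is a pure specialization of it.
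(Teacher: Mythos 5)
Your argument is correct and is exactly the route the paper takes: it records the identification $\TBC^k(\overline{\Omega},Y)=\BC^k(\overline{\Omega},Y)=\C^k(\overline{\Omega},Y)$ (via compactness of $\overline{\Omega}$ and continuity of the differentials) and then reads the corollary off the preceding theorem for $\mathfrak M_{\TBC^k}$. Nothing is missing.
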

{  Next if  $\Omega$  is an open subset of $R^n$ ed   $Y= \R$,  among others, some results of \cite{iraniano} are recovered.}
{ To this end, let us mention that if $ f \in  \BC^k  ({\Omega},  \R)$}
then
\[
\|f\|_{ \BC^k}=  \max_{0 \le | \alpha | \le k} \|D^\alpha f \|_\infty,
\]
 where $\|D^\alpha f \|_\infty = \sup \{ |D^\alpha (x)| : x \in \Omega \} $, $|\alpha|= \alpha_1 + \dots + \alpha_n$ and  
$
D^\alpha f= \frac{\delta^{\alpha_1}}{\delta x_1^{\alpha_1}}  \dots \frac{\delta^{\alpha_n}}{\delta x_n^{\alpha_n}} f.
$
Keeping in mind this and taking into account (\ref{omega segnato}) we deduce the following compactness criteria.
  \begin{corollary}
Let $\Omega$ be an open  bounded subset of $\R^n$ and let   $M $   be a subset of   $ \C^k  (\overline{\Omega},  \R)$.
 Then the following are equivalent:
 \begin{itemize}
 	\item[(i)] 
 $M$ is $\| \cdot \|_{\C^k}$-relatively compact,
 	 \item[(ii)]
 $M$ is $\| \cdot \|_{\C^k}$-bounded and  $\overline{\omega}(M)=0$,
    	\item[(iii)] 
$M^p$ is $\| \cdot \|_\infty $-bounded and equicontinuous for all $p=0, \dots, k$,
       	\item[(iv)] 
$M^\alpha= \{ D^\alpha f : f \in M \} $ are $\| \cdot \|_\infty $-bounded and equicontinuous for all $0 \le | \alpha | \le k$.
 \end{itemize}
 \end{corollary}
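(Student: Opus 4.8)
The plan is to run the four conditions as a cycle, exploiting that over $Y=\R$ every fibre‑measure occurring in the earlier estimates vanishes, and then to identify $\overline{\omega}$ with classical equicontinuity. Since $\Omega$ is open and bounded in $\R^n$, the closure $\overline{\Omega}$ is compact, so $\C^k(\overline{\Omega},\R)=\BC^k(\overline{\Omega},\R)=\TBC^k(\overline{\Omega},\R)$ is a Banach space under $\|\cdot\|_{\BC^k}$, and by the computation following $(\ref{omega segnato})$ (the $\TBC^k$–analogue of Proposition~\ref{propTB}) one has $\omega_{\BC^k}(M)=\overline{\omega}(M)$ for every $M\in{\mathfrak M}_{\C^k}$. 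Moreover each $\W_p=\Linear((\R^n)^p,\R)$ is finite dimensional, so for a $\|\cdot\|_{\BC^k}$‑bounded $M$ all the sets $M^p(x)$ and $M^p(\Omega)$ are bounded in a finite‑dimensional space, whence $\mu_{\overline{\alpha}}(M)=\sigma_{\overline{\alpha}}(M)=0$; in particular pointwise $k$‑relative compactness is automatic. Feeding $\mu_{\overline{\alpha}}(M)=0$ and $\omega_{\BC^k}(M)=\overline{\omega}(M)$ into Theorem~\ref{Theorem in Ck} (equivalently, into Corollary~\ref{compactness in BCk}) yields $\tfrac12\overline{\omega}(M)\le\alpha(M)\le 2\overline{\omega}(M)$ for bounded $M$; since $\C^k(\overline{\Omega},\R)$ is complete, $M$ is relatively compact iff it is bounded with $\alpha(M)=0$, i.e.\ iff it is bounded with $\overline{\omega}(M)=0$. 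This gives (i)$\Leftrightarrow$(ii).

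For (ii)$\Leftrightarrow$(iii) I would first record that $(\ref{omega segnato})$ gives $\overline{\omega}(M)=\max_{0\le p\le k}\omega(M^p)$, so $\overline{\omega}(M)=0$ iff $\omega(M^p)=0$ for all $p$, and that $\|f\|_{\BC^k}=\max_{0\le p\le k}\|d^pf\|_\infty$ makes $M$ be $\|\cdot\|_{\BC^k}$‑bounded iff each $M^p$ is $\|\cdot\|_\infty$‑bounded. It then suffices to check that, for a family $N$ of continuous $\W_p$‑valued functions on the compact metric space $\overline{\Omega}$, $\omega(N)=0$ holds iff $N$ is equicontinuous. If $N$ is equicontinuous it is uniformly equicontinuous on $\overline{\Omega}$, so for each $\eps>0$ any finite partition of $\overline{\Omega}$ into pieces of sufficiently small diameter makes all oscillations at most $\eps$, giving $\omega(N)=0$. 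Conversely, if $\omega(N)=0$ but $N$ were not equicontinuous at some $x_0$, pick $\eps_0>0$, $y_j\to x_0$ and $g_j\in N$ with $\|g_j(y_j)-g_j(x_0)\|_{on}\ge\eps_0$; fix a partition witnessing $\omega(N)\le\eps_0/3$, pass to a subsequence along which all $y_j$ lie in one cell $A$ (so $x_0\in\overline A$), and use the continuity of each fixed $g_j$ to pass to the limit within $A$, forcing $\|g_j(y_j)-g_j(x_0)\|_{on}\le\eps_0/3$, a contradiction. (Alternatively this equivalence follows at once from Theorem~\ref{coralpha}, applied in the totally bounded space $\C(\overline{\Omega},\W_p)$ where $\sigma_\alpha=\mu_\alpha=0$ on bounded sets, together with the classical Ascoli--Arzel\`a theorem.) Combining these remarks, (ii) is equivalent to ``each $M^p$, $0\le p\le k$, is $\|\cdot\|_\infty$‑bounded and equicontinuous'', which is (iii).

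Finally, (iii)$\Leftrightarrow$(iv) is linear algebra on $\R^n$. For $f\in\C^k(\overline{\Omega},\R)$ the differential $d^pf(x)$ is, by the symmetry of the $p$‑th differential, a symmetric $p$‑linear form whose values on tuples of standard basis vectors of $\R^n$ are exactly the partial derivatives $D^\alpha f(x)$ with $|\alpha|=p$; expanding an arbitrary unit tuple in coordinates produces constants $c=c(n,p)$ with
\[
\max_{|\alpha|=p}|D^\alpha g(x)|\ \le\ \|d^pg(x)\|_{on}\ \le\ c\max_{|\alpha|=p}|D^\alpha g(x)|
\]
for all $x$, and the same inequalities are read off coordinatewise from the symmetric form $d^pf(x)-d^pf(y)$. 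Hence $M^p$ is $\|\cdot\|_\infty$‑bounded (respectively equicontinuous) iff every $M^\alpha$ with $|\alpha|=p$ is, and letting $p$ run from $0$ to $k$ — equivalently letting $\alpha$ run over $0\le|\alpha|\le k$ — turns (iii) into (iv). I expect the only step needing genuine care to be this last comparison between the Fr\'echet differentials and the array of partial derivatives (the symmetry of $d^pf$ and the dimension‑dependent constants); everything else is bookkeeping on top of Theorem~\ref{Theorem in Ck}, Corollary~\ref{compactness in BCk}, Theorem~\ref{coralpha}, and the classical Bartle/Ascoli--Arzel\`a facts.
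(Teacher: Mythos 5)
Your proposal is correct and follows essentially the same route the paper intends: the paper deduces this corollary from the preceding $\TBC^k$ estimate together with the identity $\overline{\omega}(M)=\max_{p}\omega(M^p)$ and the comparison of $\|\cdot\|_{\BC^k}$ with $\max_{0\le|\alpha|\le k}\|D^\alpha f\|_\infty$, which is exactly your chain (i)$\Leftrightarrow$(ii)$\Leftrightarrow$(iii)$\Leftrightarrow$(iv), with finite-dimensionality of the spaces $\Linear((\R^n)^p,\R)$ killing the pointwise terms. You simply supply the details (the equivalence of $\omega(N)=0$ with equicontinuity on the compact set $\overline{\Omega}$, and the two-sided comparison of $\|d^pf(x)\|_{on}$ with the partial derivatives) that the paper leaves implicit, and these are all sound.
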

 In particular,   the above condition (iv)    recovers Theorem 2.1 of \cite{iraniano}.
Moreover, 
 denoting by $| \cdot |_n$   the Euclidean norm in $\R^n$ we define the subspace $\C^k_0 (\R^n, \R)$ of $\BC^k (\R^n, \R)$  as follows:
\[
\C^k_0 (\R^n, \R) =\{ f \in  \C^k (\R^n, \R) : D^\alpha f \in \C_0 \ \mbox{for} \  0 \le | \alpha | \le k \},
\]
where  ${\C_0= \{ f \in \BC^k(\R^n, \R) : \lim_{ |x|_n \to \infty} f(x)=0 \}}$, with the norm
$ \|f\|_{\C^k_0} = \max_{0 \le | \alpha | \le k} \|D^\alpha f \|_\infty$.
Then,  from Corollary    \ref{compactness in BCk}  and in view of Remark \ref{rem}, we have a compactness criterion in the space 
$\C^k_0 (\R^n, \R) $ (cf. \cite[Theorem 3.1]{iraniano}).

We complete this section by stating, without proofs,  estimates and precise formulas for the Hausdorff measure of noncompactness,  involving the quantitative characteristics  $  {\mu}_{\gammaBCk }$ and  $  {\sigma}_{\gammaBCk }$,  in the spaces $\BC^k   (\Omega, Y)$  and  $\TBC^k (\Omega, Y)$.

\begin{theorem} 
Let  $M \in {\mathfrak M}_{{\BC}^k}$, then
 \begin{equation*}
\max \left\{   {\mu}_{\gammaBCk }(M) , \ \frac1{2}   \omega_{{\BC}^k}  (M) \right\} \le \gamma_{{\BC}^k}  (M)  \le  2(  {\mu}_{\gammaBCk }(M)  +  \omega_{{\BC}^k} (M)).  \end{equation*}
\end{theorem}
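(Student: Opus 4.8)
The plan is to mirror the two-sided argument used for the Kuratowski case in Theorem~\ref{Theorem in Ck}, transplanting it through the equivalence $\gamma_{\W_p}\le\alpha_{\W_p}\le 2\gamma_{\W_p}$ in each factor space $\W_p$. For the \emph{left} inequality, I would fix $a>\gamma_{\BC^k}(M)$ and a finite $\|\cdot\|_{\BC^k}$-$a$-net $\{\varphi_1,\dots,\varphi_n\}$ for $M$ in $\BC^k$. Evaluating differentials at a point $x$, the set $\{d^p\varphi_1(x),\dots,d^p\varphi_n(x)\}$ is a $\|\cdot\|_{on}$-$a$-net for $M^p(x)$ in $\W_p$, so $\mu_{\gamma_p}(M^p)\le a$ for every $p$, whence $\mu_{\gammaBCk}(M)\le a$. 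For the $\omega_{\BC^k}$ term, picking for $f\in M$ an index $j$ with $\|f-\varphi_j\|_{\BC^k}\le a$ gives $\operatorname{diam}_{\W_p}(d^p(f-\varphi_j)(\Omega))\le 2\|f-\varphi_j\|_{\BC^k}\le 2a$ for all $p$, so taking $\{\Omega\}$ as partition and $\{\varphi_1,\dots,\varphi_n\}$ as the finite set shows $\omega_{\BC^k}(M)\le 2a$. Letting $a\downarrow\gamma_{\BC^k}(M)$ yields $\max\{\mu_{\gammaBCk}(M),\tfrac12\omega_{\BC^k}(M)\}\le\gamma_{\BC^k}(M)$.

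For the \emph{right} inequality the cleanest route is to prove an auxiliary lemma analogous to Lemma~\ref{mainlemmagamma}, namely $\gamma_{\BC^k}(M)\le \mu_{\gammaBCk}(M)+\omega_{\BC^k}(M)$, and then lose a factor $2$ only where unavoidable. Concretely: fix $a>\omega_{\BC^k}(M)$ with partition $\{A_1,\dots,A_n\}$, functions $\{\varphi_1,\dots,\varphi_m\}$, and sets $M_j=\{f\in M:\max_p\operatorname{diam}_{\W_p}(d^p(f-\varphi_j)(A_i))\le a,\ i=1,\dots,n\}$ covering $M$. Fix $x_i\in A_i$; then $\bigcup_{i=1}^n M_j^p(x_i)\subseteq\bigcup_{i=1}^n M^p(x_i)$ has Hausdorff measure at most $\mu_{\gamma_p}(M^p)\le\mu_{\gammaBCk}(M)$ in $\W_p$, so for $b>\mu_{\gammaBCk}(M)$ we get finite $b$-nets $\{y^{j,p}_1,\dots,y^{j,p}_{k_{p(j)}}\}$ in $\W_p$ for each $j$ and each $p$. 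The delicate point, and the \textbf{main obstacle}, is to reconstruct an approximating function $\psi_{j,h^j}\in\BC^k$ out of these nets in all factors \emph{simultaneously and consistently}: unlike the $\B$ case, one cannot freely glue affine pieces on the $A_i$ because the resulting function must be $k$-times continuously differentiable on the open set $\Omega$. I expect this to force either the use of a partition $\{A_1,\dots,A_n\}$ adapted to a smooth partition of unity, or—more simply—to settle for the weaker constant by invoking $\gamma_{\BC^k}(M)\le 2\gamma_{\BC^k}(M)$-type estimates together with $\alpha$-covers: cover each $M_j$ by sets of $\BC^k$-diameter $\le b+2a$ exactly as in Theorem~\ref{Theorem in Ck}, obtaining $\alpha(M)\le\mu_{\alphaBCk}(M)+2\omega_{\BC^k}(M)\le 2\mu_{\gammaBCk}(M)+4\omega_{\BC^k}(M)=2(\mu_{\gammaBCk}(M)+\omega_{\BC^k}(M))+2\omega_{\BC^k}(M)$, which is slightly too weak.

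Thus the honest plan is to prove the sharp lemma $\gamma_{\BC^k}(M)\le\mu_{\gammaBCk}(M)+\omega_{\BC^k}(M)$ directly, constructing the net explicitly. Here one exploits that the $\varphi_j$ are already in $\BC^k$: define, on each $A_i$, the correction $d^p f(x)\approx d^p\varphi_j(x)-d^p\varphi_j(x_i)+y^{j,p}_{h^j_p(i)}$, and then observe this is precisely the family of $p$-th differentials of the single function $\psi_{j,h^j}(x)=\varphi_j(x)+\sum_{i}\chi_{A_i}(x)\big(\text{constant vector }c^{j}_{h^j(i)}\big)$, where the constants are chosen in $Y=\W_0$ using the $p=0$ net; the higher-order conditions are then automatically controlled because $d^p\psi_{j,h^j}=d^p\varphi_j$ for $p\ge 1$, and the $A_i$ must be chosen so that these locally-constant shifts do not destroy differentiability—this is possible when the partition refines one subordinate to a cover by balls on which $\varphi_j$ oscillates little, which is exactly what $\omega_{\BC^k}(M)<a$ provides. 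Granting the lemma, the right inequality of the theorem follows from $\gamma_{\BC^k}(M)\le 2\gamma_{\BC^k}(M)$? No—it follows directly, and in fact gives the stronger $\mu_{\gammaBCk}(M)+\omega_{\BC^k}(M)$; the stated factor $2$ is then a safe weakening, consistent with Theorem~\ref{maintheoremgamma}, obtained via $\gamma_{\BC^k}\le 2\gamma_{\B\text{-type}}$ reductions in each $\W_p$ should the direct construction prove too awkward in full generality. All remaining steps—verifying $\max$ and sublinearity properties, deducing the regular measure of noncompactness $\mu_{\gammaBCk}+\omega_{\BC^k}$, and the compactness criterion—are routine, paralleling Corollaries~\ref{newMNC} and~\ref{cor1}.
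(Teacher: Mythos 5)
Your left inequality is correct and is exactly the argument the paper uses in the corresponding place (cf.\ the first half of the proof of Theorem~\ref{maintheoremgamma}): evaluate an $a$-net at each point to bound $\mu_{\gammaBCk}$, and take the trivial partition $\{\Omega\}$ with the net as the finite set to bound $\omega_{\BC^k}$. (The paper states this theorem without proof, so the comparison below is with the evident intended argument, namely the $\BC^k$ analogue of Lemma~\ref{mainlemmagamma} plus Theorem~\ref{maintheoremgamma}.)

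The right inequality, however, has a genuine gap. Your proposed ``sharp lemma'' $\gamma_{\BC^k}(M)\le\mu_{\gammaBCk}(M)+\omega_{\BC^k}(M)$ rests on the net functions $\psi_{j,h^j}=\varphi_j+\sum_i\chi_{A_i}\,c^j_{h^j(i)}$, and this construction fails on two counts. First, the partition $\{A_1,\dots,A_n\}$ is whatever the definition of $\omega_{\BC^k}(M)$ hands you: its pieces are arbitrary subsets of the open set $\Omega$, and adding distinct constants on the pieces of such a partition destroys continuity (let alone $k$-fold differentiability) whenever two pieces share a boundary point, e.g.\ $\Omega=(0,1)$ split at $1/2$. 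You cannot ``choose the partition subordinate to a cover by balls'': extended $k$-equicontinuity gives you \emph{some} partition, not one of your choosing, and refining it does not make its pieces relatively clopen. Second, and independently of smoothness, the approximation fails for $p\ge1$: since $d^p\psi_{j,h^j}=d^p\varphi_j$ for $p\ge 1$, you would need $\|d^pf-d^p\varphi_j\|_\infty\le a+b$, but the hypothesis only controls the \emph{oscillation} $\mbox{diam}_{\W_p}(d^p(f-\varphi_j)(A_i))\le a$, not the size of $d^pf-d^p\varphi_j$, which may be a large constant on each $A_i$. In Lemma~\ref{mainlemmagamma} this is repaired by the anchor term $-\varphi_j(x_i)+y^j_{h^j(i)}$; the analogous corrections $-d^p\varphi_j(x_i)+y^{j,p}_{h^j_p(i)}$ are independent elements of the distinct spaces $\W_p$ and cannot all be realized as differentials of a single $\BC^k$ perturbation of $\varphi_j$ by a locally constant function. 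The workable route --- which also explains why the stated constant is $2$ rather than $1$ --- is to embed $\BC^k(\Omega,Y)$ isometrically into $\prod_{p=0}^k\B(\Omega,\W_p)$ via $f\mapsto(f,df,\dots,d^kf)$, run the gluing of Lemma~\ref{mainlemmagamma} coordinatewise in each $\B(\Omega,\W_p)$ (where the discontinuous glued nets are legitimate elements), obtaining a finite $(a+b)$-net for the image of $M$ in the ambient product and hence a constant-$1$ bound there, and then pay the factor $2$ in passing back to the subspace, exactly as $\gamma_\X(M)\le2\gamma_\B(M)$ is used in the proof of Theorem~\ref{maintheoremgamma}. Your fallback via $\alpha$ is, as you note yourself, too weak by an extra $2\omega_{\BC^k}(M)$, so it cannot substitute for this step.
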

\begin{theorem} 
Let  $M \in {\mathfrak M}_{{\TBC}^k}$, then
 \begin{equation*}
\max  \left\{   {\sigma}_{\gammaBCk }(M) , \ \frac1{2}   \overline{\omega}  (M) \right\} \le \gamma_{{\TBC}^k}  (M)  \le      {\mu}_{\gammaBCk }(M)  +  \overline{\omega}(M).  \end{equation*}
\end{theorem}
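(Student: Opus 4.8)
The plan is to mirror, in the $\TBC^k$ setting, the proof structure already established for $\T$ in Theorem~\ref{AT}, transferring everything from $\BC^k$ via the identity $\omega_{\TBC^k}(M)=\overline{\omega}(M)$ and the Hausdorff analogue of Lemma~\ref{lemma1}. Concretely, since $\TBC^k$ is a Banach subspace of $\BC^k$, the preceding theorem (the one just stated for $\BC^k$) already gives the lower bounds $\mu_{\gammaBCk}(M)\le\gamma_{\BC^k}(M)$ and $\tfrac12\,\omega_{\BC^k}(M)\le\gamma_{\BC^k}(M)$, and hence their analogues with $\gamma_{\TBC^k}$ in place of $\gamma_{\BC^k}$, because the net for $M$ in $\BC^k$ realizing $\gamma_{\BC^k}$ can be replaced by one in $\TBC^k$ at the cost of nothing here; more precisely, one argues as in the proof of Theorem~\ref{maintheoremgamma}, left inequality, directly inside $\TBC^k$. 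By the remark following~(\ref{omega segnato}) we have $\omega_{\TBC^k}(M)=\overline{\omega}(M)$, so the $\tfrac12\,\omega_{\BC^k}$ term becomes $\tfrac12\,\overline{\omega}(M)$.

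For the $\sigma_{\gammaBCk}$ part of the lower bound, I would prove the $\TBC^k$-version of Lemma~\ref{lemmaAT}: for $M\in{\mathfrak M}_{\TBC^k}$ one has $\sigma_{\gamma_p}(M^p)\le\gamma_{\TBC^k}(M)$ for each $p=0,\dots,k$, whence $\sigma_{\gammaBCk}(M)=\max_{p=0}^k\sigma_{\gamma_p}(M^p)\le\gamma_{\TBC^k}(M)$. The argument is the one sketched in the paper right before the theorem (``arguing similarly as in Lemma~\ref{lemma1}''): fix $a>\gamma_{\TBC^k}(M)$ and a $\|\cdot\|_{\BC^k}$-$a$-net $\{\varphi_1,\dots,\varphi_n\}$ for $M$ in $\TBC^k$; since each $\varphi_i$ is compact with all differentials, $\gamma_p\bigl(\bigcup_{i=1}^n\varphi_i^p(\Omega)\bigr)=0$, so for every $\eps>0$ this union has a finite $\|\cdot\|_{on}$-$\eps$-net in $\W_p$, which then serves as a $(a+\eps)$-net for $f^p(\Omega)$ for every $f\in M$; let $\eps\to0$ and $a\to\gamma_{\TBC^k}(M)$.

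The upper bound $\gamma_{\TBC^k}(M)\le\mu_{\gammaBCk}(M)+\overline{\omega}(M)$ is obtained by the ``componentwise Lemma~\ref{mainlemmagamma}'' construction adapted to $\BC^k$, exactly as the upper bound in the preceding $\BC^k$-theorem is proved, but now exploiting that we work in $\TBC^k$ to avoid the factor $2$ — precisely the simplification noted in the proof of Theorem~\ref{AT}. Fix $a>\overline{\omega}(M)$, a finite partition $\{A_1,\dots,A_n\}$ and $\{\varphi_1,\dots,\varphi_m\}\subseteq\TBC^k$ with $\max_{p=0}^k\operatorname{diam}_{\W_p}(d^p(f-\varphi_j)(A_i))\le a$; set $M_j$ accordingly, fix $x_i\in A_i$, note $\gamma_p\bigl(\bigcup_{i=1}^nM_j^p(x_i)\bigr)\le\mu_{\gamma_p}(M^p)\le\mu_{\gammaBCk}(M)$, choose $b>\mu_{\gammaBCk}(M)$ and finite $\|\cdot\|_{on}$-$b$-nets $\{y^{j,p}_1,\dots\}$ in each $\W_p$, and assemble approximants $\psi_{j,h^j}$ whose $p$-th differential is $\sum_i\chi_{A_i}(x)\bigl(d^p\varphi_j(x)-d^p\varphi_j(x_i)+y^{j,p}_{h^j_p(i)}\bigr)$; the triangle inequality on each differential gives $\|d^p(f-\psi_{j,h^j})\|_\infty\le a+b$ for all $p$, hence $\|f-\psi_{j,h^j}\|_{\BC^k}\le a+b$, and letting $a,b$ shrink yields the claim.

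The main obstacle — and the reason the paper states this ``without proof'' — is the bookkeeping needed to check that each $\psi_{j,h^j}$ is genuinely an element of $\TBC^k$, i.e. that it is $C^k$ with the prescribed differentials and that it and all its differentials are compact. This is where one must be careful: the function defined piecewise by characteristic functions of the $A_i$ is not a priori differentiable, so one has to argue as in Lemma~\ref{mainlemmagamma} that it is only used as a $\BC$-level approximant of the top-order data and that the genuine $C^k$ approximants are obtained by a coherent choice across all orders $p$ simultaneously (this is exactly why $H^j$ is taken to be tuples $h^j=(h^j_0,\dots,h^j_k)$ in Theorem~\ref{Theorem in Ck}); alternatively, one replaces $\psi_{j,h^j}$ by suitable elements of $M_j\subseteq\TBC^k$ itself, as was done in the $\omega_\B\le\omega_\X\le2\omega_\B$ argument, at the harmless cost of constants already absorbed. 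Compactness of the differentials of the chosen approximants is then automatic because they are differentials of functions in $\TBC^k$. Everything else is a routine transcription of the $k=0$ arguments with $\|\cdot\|_\infty$ replaced by $\|\cdot\|_{\BC^k}$ and diameters taken in the appropriate $\W_p$.
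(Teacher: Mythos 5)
The paper states this theorem explicitly ``without proofs,'' so there is no argument of the authors to match yours against; I can only assess your reconstruction on its merits. Your two lower bounds are correct and are surely what the authors intend: the bound $\tfrac12\overline{\omega}(M)\le\gamma_{\TBC^k}(M)$ follows from the $\{\Omega\}$-partition argument of Theorem~\ref{maintheoremgamma} run inside $\TBC^k$ together with $\omega_{\TBC^k}(M)=\overline{\omega}(M)$, and your componentwise transcription of Lemma~\ref{lemmaAT} gives $\sigma_{\gamma_p}(M^p)\le\gamma_{\TBC^k}(M)$ for each $p$ exactly as claimed.

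The upper bound, however, has a genuine gap which you name but do not close, and neither of your two proposed repairs works. First, the approximants $\psi_{j,h^j}$ built from $\sum_i\chi_{A_i}(\cdot)\bigl(d^p\varphi_j(\cdot)-d^p\varphi_j(x_i)+y^{j,p}_{h^j_p(i)}\bigr)$ are not elements of $\TBC^k$: they are in general not even continuous across the boundaries of the $A_i$ (the $A_i$ are arbitrary subsets of $\Omega$), and, worse, the order-by-order prescription is internally inconsistent --- the derivative of the order-$0$ formula on the interior of $A_i$ is $d\varphi_j$, not $d\varphi_j(\cdot)-d\varphi_j(x_i)+y^{j,1}_{h^j_1(i)}$, so the prescribed tuple is not the jet of any single $C^k$ function; ``a coherent choice across all orders'' does not resolve this. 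Note that the $k=0$ result you are modelling, Theorem~\ref{AT}, succeeds precisely because $\T$ contains the discontinuous simple functions $\sum_i\chi_{A_i}y_{h(i)}$; $\TBC^k$ contains no such functions. Second, your fallback of replacing each $\psi_{j,h^j}$ by an element of $M_{j,h^j}\subseteq\TBC^k$ produces an inner net at distance at most $\operatorname{diam}(M_{j,h^j})\le 2a+b$, hence only $\gamma_{\TBC^k}(M)\le 2\,\overline{\omega}(M)+\mu_{\gammaBCk}(M)$ (or $2(\mu_{\gammaBCk}(M)+\overline{\omega}(M))$ via the triangle inequality); the factor $2$ on the $\overline{\omega}$ term is not ``already absorbed'' anywhere in the stated inequality $\gamma_{\TBC^k}(M)\le\mu_{\gammaBCk}(M)+\overline{\omega}(M)$. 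To establish the claimed constant one must exhibit genuine $\TBC^k$-valued $(a+b)$-nets, and your proposal does not supply a mechanism for doing so.
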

Therefore, if $M \in {\mathfrak M}_{{\TBC}^k}$   and  $   \overline{\omega}(M)=0$,  we have the formulas   $ \gamma_{{\TBC}^k}  (M) = 
 {\sigma}_{\gammaBCk }(M)$ or    $ \gamma_{{\BC}^k}  (M)  = {\mu}_{\gammaBCk }(M)$ .

\subsection{Results in $\D^k( \Omega, Y)$  }
{Finally, we apply  results of Section \ref{B}  to derive compactness results in   $\C^k(\Omega, Y)$ made into  a complete locally convex space  by the topology  $\tau$ of compact convergence for all differentials, 
 i.e. the topology generated by 
 the family of seminorms
 \[
\|  f \|_{\C^k, \U} = \max \{  \sup_{x \in \U} \|f(x)\|, \sup_{x \in \U} \|d f(x)\|, \dots, \sup_{x \in \U} \|d^kf(x)\|\}
  \quad  \U \in \K,
 \]
where the symbol 
$\K$  denotes the family of all compact subsets of $\Omega$. 
We  set   $\D^k( \Omega, Y)= ( \C^k(\Omega, Y), \tau) $.
In particular,  $  \D^0 (\Omega, Y)$ reduces to the space, simply denoted by  $\D (\Omega, Y)$, of all continuous functions from $\Omega$ to $Y$  endowed with the usual topology of uniform convergence on compacta}.
Further,  for a fixed  $\U$ in $ \K$,  we denote by $\D^k_\U( \Omega, Y)$ the complete seminormed space   of all $k$-times continuously  differentiable functions  endowed  with the seminorm $ \| \cdot \|_{\C^k,\U}$.

We use the  notation $ {\mathfrak M}_{\D^k}$   for the family of all $\tau$-bounded subsets of $\D^k( \Omega, Y)$.
 Let us now equip the linear space  of all functions from  $\K$ to $[0, + \infty)$  with the usual order and with the topology of pointwise convergence.
 Then, according to \cite[Definition~1.2.1]{SA},   for a subset $M$ of $ {\mathfrak M}_{\D^k}$,   the Kuratowski  and the Hausdorff   
  measures of noncompactness generated by the family of seminorms 
$  \{ \|  \cdot \|_{\C^k,\U} \}_{ \U \in \K}$ are functions 
$
  \alpha_{\D^k } (M), \ \ \gamma_{\D^k  } (M) :  \ \K \to [0, +\infty)
$
where
$
 \alpha_{\D^k } (M)  (\U)  =  \alpha_{\D^k_\U } (M)  
$,  that is, $
 \alpha_{\D^k } (M)  (\U) $ is the Kuratowski measure of noncompactness of $M$  with respect to the seminorm $\|  \cdot \|_{\C^k, \U}$, and analogously
$
 \gamma_{\D^k } (M) (\U)= \gamma_{\D^k_\U } (M)  
 $. 

We refer to  \cite[Theorem 1.2.3]{SA} for the properties of  these generalized measures of noncompactness.
In a similar way,  we will introduce the quantitative characteristics useful to prove our estimates as functions from  $\K$ to $[0, + \infty)$ .
To this end, for  $M \in  {\mathfrak M}_{\D^k}$,   and $p=0, \dots, k$,   we define $M^p$ as in  (\ref{Mp}), and, given $x \in \Omega$ and $\U \in \K$,  we define consequently also $M^p(x)$ and $M^p(\U)$. 
 Moreover  for $M \in  {\mathfrak M}_{\D^k}$ and $\U \in \K$ we use the following notations
\begin{eqnarray*} 
 \mu_{{\overline{\alpha}}, \U}  (M) &=\max_{p=0}^k  \mu_{\alpha_p , \U} (M^p)   \qquad \mbox{with}  \qquad       \mu_{\alpha_p , \U} (M^p)= \sup_{x \in \U }  \alpha_p (M^p (x)),
\\
\mu_{{\overline{\gamma}}, \U}  (M) &=\max_{p=0}^k  \mu_{\gamma_p , \U} (M^p)               \qquad \mbox{with}  \qquad             \mu_{\gamma_p , \U} (M^p)= \sup_{x \in \U }  \gamma_p (M^p (x)),
\\
\sigma_{{\overline{\alpha}}, \U}  (M) &=\max_{p=0}^k  \sigma_{\alpha_p , \U} (M^p)               \qquad \mbox{with}  \qquad               \sigma_{\alpha_p , \U} (M^p)= \sup_{x \in \U }  \alpha_p (M^p (x)),
\\
\sigma_{{\overline{\gamma}}, \U}  (M) &=\max_{p=0}^k  \sigma_{\gamma_p , \U} (M^p)            \qquad \mbox{with}  \qquad        \sigma_{\gamma_p , \U} (M^p)= \sup_{x \in \U }  \gamma_p (M^p (x)),
 \end{eqnarray*}
 and
 \begin{eqnarray*}
 \begin{split}
  {\overline{\omega}}_\U (M )  = \max_{p=0}^k  \Big\{     \inf \{ &\eps >0:    \ \mbox{there is a finite partition} \ \{ A_1, \dots, A_n\}  \ \mbox{of} \ \U
\\
&   \mbox{such that, for all}  \ f \in M, \  \mbox{diam}_{\W_p} d^pf(A_i) \le \eps , \  \ i= 1, \dots, n\}    \Big\}. 
\end{split}
\end{eqnarray*}
Now,  for a given   $M \in  {\mathfrak M}_{\D^k}$ we define  the set functions
\[
 \mu_{\alpha_\tau} (M),  \ \mu_{\gamma_\tau}    (M),  
 \ \sigma_{\alpha_\tau} (M) ,    \ \sigma_{\gamma_\tau} (M),   \ \omega_{\D^k} (M): \ \K \to [0, + \infty)
\]
     by setting for $\U \in \K$
\begin{eqnarray*} 
  \mu_{\alpha_\tau}(M)   (\U)    &=  \mu_{{\overline{\alpha}}, \U}  (M)  
\\
 \mu_{\gamma_\tau} (M)   (\U)    &=  \mu_{{\overline{\gamma}}, \U} (M)   
\\
  \sigma_{\alpha_\tau} (M)   (\U)    &=  \sigma_{{\overline{\alpha}}, \U}  (M)\\
  \sigma_{\gamma_\tau} (M)   (\U)    &=  \sigma_{{\overline{\gamma}}, \U}  (M)
   \end{eqnarray*}
and 
\[
\omega_{\D^k}  (M )  (\U)= {\overline{\omega}}_\U (M ).
 \]
A set   $M \subset {\mathfrak M}_{{\D}^k}$ will be called   { pointwise $\tau$-relatively compact} 
if  ${\mu}_{\alpha_\tau} (M)=0$ or $ {\mu}_{\gamma_\tau} (M)=0$,
 and   {  $\tau$-equicontinuous}   
 if $\omega_{\D^k} (M)=0$.
  \begin{theorem} \label{Theorem in Dk}
Let  $M \in {\mathfrak M}_{{\D}^k}$, then
 \begin{equation*} 
  \max \{ {\mu}_{\alpha_\tau} (M), \ \frac1{2}    \omega_{\D^k}  (M) \} \le 
 \alpha_{{\D}^k} (M) \le  {\mu}_{\alpha_\tau} (M) + 2 \omega_{\D^k}  (M).
\end{equation*} 
 \end{theorem}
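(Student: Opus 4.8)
The plan is to reduce everything, for each fixed $\U \in \K$, to Theorem~\ref{maintheoremalpha}. Since $\alpha_{\D^k}(M)(\U) = \alpha_{\D^k_\U}(M)$, $\mu_{\alpha_\tau}(M)(\U) = \mu_{\overline{\alpha},\U}(M)$ and $\omega_{\D^k}(M)(\U) = \overline{\omega}_\U(M)$, and the functions on $\K$ are compared pointwise, it is enough to prove, for an arbitrary $\U \in \K$,
\[
\max \{ \mu_{\overline{\alpha},\U}(M), \ \frac1{2} \overline{\omega}_\U(M) \} \le \alpha_{\D^k_\U}(M) \le \mu_{\overline{\alpha},\U}(M) + 2\,\overline{\omega}_\U(M).
\]
To this end I would introduce the linear map $R_\U \colon \C^k(\Omega,Y) \to \B(\U,\widetilde W)$, $R_\U f = (d^0 f|_\U,\dots,d^k f|_\U)$, where $\widetilde W = \prod_{p=0}^k \W_p$ carries the maximum norm, and observe that $\|f-g\|_{\C^k,\U} = \|R_\U f - R_\U g\|_\infty$. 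Consequently a finite cover of $M$ by sets of $\|\cdot\|_{\C^k,\U}$-diameter $\le\eps$ corresponds exactly to a finite cover of $R_\U(M)$ by sets of $\|\cdot\|_\infty$-diameter $\le\eps$ and vice versa, so $\alpha_{\D^k_\U}(M) = \alpha_{\B(\U,\widetilde W)}(R_\U(M))$; moreover, since $\U$ is compact and each $d^p f$ is continuous, each $d^p f(\U)$ is compact in $\W_p$, so $R_\U f$ has relatively compact range and $R_\U(M)$ is a bounded subset of $\T(\U,\widetilde W)$.

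The next step is two bookkeeping identities. First, for every $x \in \U$ one has $\alpha_{\widetilde W}(R_\U(M)(x)) = \max_{p} \alpha_p(M^p(x))$: the inequality $\ge$ comes from the $1$-Lipschitz projections $\widetilde W \to \W_p$, and $\le$ follows because products of covers of the factors $M^p(x)$ of diameter $\le\eps$ form a cover of $R_\U(M)(x)$ of maximum-norm diameter $\le\eps$; taking the supremum over $x \in \U$ gives $\mu_\alpha(R_\U(M)) = \mu_{\overline{\alpha},\U}(M)$. Second, $\omega(R_\U(M)) = \overline{\omega}_\U(M)$: here $\le$ is obtained by refining the finitely many per-$p$ partitions of $\U$ witnessing $\overline{\omega}_\U(M)$ to a single common partition, and $\ge$ is immediate from the definition of $\overline{\omega}_\U$. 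Since $\T(\U,\widetilde W) \subseteq \T$, Proposition~\ref{propTB} gives $\omega_{\T(\U,\widetilde W)}(R_\U(M)) = \omega(R_\U(M))$, and Theorem~\ref{maintheoremalpha}, applied in the Banach space $\T(\U,\widetilde W)$ to $R_\U(M)$, then yields precisely the displayed inequalities.

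The one point that needs care — and the reason $\overline{\omega}_\U$ is defined without correction functions, unlike $\omega_{\BC^k}$ — is the passage through totally bounded functions: on a non-compact domain the bare partition characteristic does not detect relative compactness (Example~\ref{EX1} with $k=0$), but on the compact set $\U$ every $d^p f|_\U$ has compact range, which is exactly what puts $R_\U(M)$ inside $\T(\U,\widetilde W)$ and lets Proposition~\ref{propTB} take over. If one prefers a self-contained argument, it can be obtained by copying the proof of Theorem~\ref{Theorem in Ck} on $\U$: for the lower bound, from a cover $\{M_1,\dots,M_n\}$ of $M$ of $\|\cdot\|_{\C^k,\U}$-diameter $\le a$ one reads off $\mu_{\overline{\alpha},\U}(M) \le a$ directly, and, choosing $\varphi_i \in M_i$ and pulling back a fine partition of the compact sets $d^p\varphi_i(\U)$, one gets $\overline{\omega}_\U(M) \le 2a$; for the upper bound, one refines the per-$p$ partitions witnessing $\overline{\omega}_\U(M)$ to a single $\{S_1,\dots,S_q\}$, fixes $x_r \in S_r$, covers each $\bigcup_r M^p(x_r)$ by sets of diameter $\le b > \mu_{\overline{\alpha},\U}(M)$, groups the functions of $M$ by which of these sets contains $d^p f(x_r)$ for all $p,r$, and bounds $\|d^p f(x) - d^p g(x)\|_{on} \le a+b+a$ on each $S_r$ for $f,g$ in the same group, so each group has $\|\cdot\|_{\C^k,\U}$-diameter $\le b+2a$. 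Either way the only genuine work is the refinement-of-partitions bookkeeping.
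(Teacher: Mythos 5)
Your argument is correct, and it rests on the same key observation as the paper's proof --- fix $\U\in\K$, note that restriction to the compact set $\U$ turns every $d^pf$ into a function with relatively compact range, and then invoke the totally-bounded-case machinery of Section~\ref{B} through Proposition~\ref{propTB} --- but the packaging is genuinely different. The paper passes to the quotient of the seminormed space $\D_\U(\Omega,\W_p)$ by the null functions, identifies it isometrically with $(\C(\U,\W_p),\|\cdot\|_\infty)$, applies Theorem~\ref{coralpha} to each $M^p$ separately, and then ``takes the maximum over $p$''; that last step silently uses the identity $\alpha_{\D^k_\U}(M)=\max_{p}\alpha_{\D_\U(\Omega,\W_p)}(M^p)$, whose nontrivial half needs the common refinement of the covers of the various $M^p$. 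Your single map $R_\U$ into $\B(\U,\prod_{p=0}^k\W_p)$ replaces that step by the tautological isometry $\|f-g\|_{\C^k,\U}=\|R_\U f-R_\U g\|_\infty$ and a single application of Theorem~\ref{maintheoremalpha}, at the price of the two bookkeeping identities $\mu_\alpha(R_\U(M))=\mu_{\overline{\alpha},\U}(M)$ and $\omega(R_\U(M))=\overline{\omega}_\U(M)$, both of which you verify correctly (projections for one inequality, products of covers and common refinements of partitions for the other). One minor difference in output: since the paper routes through Theorem~\ref{coralpha} rather than Theorem~\ref{maintheoremalpha}, its proof actually yields the stronger lower bound with $\sigma_{\overline{\alpha},\U}$ in place of $\mu_{\overline{\alpha},\U}$; your argument as written gives only the $\mu$ version, which is all the stated theorem claims, and the $\sigma$ refinement would follow by applying Lemma~\ref{lemma1} to $R_\U(M)$. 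Your closing remark on why $\overline{\omega}_\U$ needs no correction functions over a compact $\U$ is precisely the content of the paper's intermediate step.
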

 \begin{proof}
 Let  $M \in {\mathfrak M}_{{\D}^k}$. We have to prove, for each $\U \in \K$
  \begin{equation*}
  \max \{ {\mu}_{\overline{\alpha}, \U} (M), \ \frac1{2} \overline{\omega}_\U (M) \} \le 
 \alpha_{{\D}^k_\U} (M) \le  {\mu}_{\overline{\alpha}, \U} (M) + 2         \overline{\omega}_\U (M).
\end{equation*}
Let us  make an intermediate step. Let   $(\W, \| \cdot \|_\W)$ a given Banach space.
Coherently with our previous notations, 
we denote by  $\D_\U  (\Omega, \W)$  the complete  seminormed space  $(\C  (\Omega, \W), \|\cdot \|_{\C,\U} )$, where 
\[
\|f \|_{\C,\U}  = \sup_{x \in \U} \|f(x)\|_\W, 
\]
and we focus our attention on this space.
Set $N=\{f \in  \D_\U  (\Omega, \W): \|f\|_{\C,\U} =0 \}$
 and let us still denote by $\D_\U  (\Omega, \W)$ the Banach quotient space $\D_\U  (\Omega, \W)/ N$ of equivalence classes,  by $f$  the  class $f+ N$ of $\D_\U  (\Omega, \W)/ N$ and the same for the norm  $\|\cdot \|_{\C,\U}$.
Then, let us observe that the Banach space $\D_\U  (\Omega, \W)  $ is isometric to the Banach space $(\C(\U, \W), \| \cdot \|_\infty)$.
Therefore, for $p=0, \dots, k$,  the quotient Banach spaces $\D_\U  (\Omega, \W_p)  $, endowed with the norm $ \|\cdot\|_{\C,\U}$, are isometric to the Banach spaces $(\C(\U, \W_p), \| \cdot \|_\infty)$, so that    $ \alpha_{\C  (\U, \W_p)}     (M^p) = \alpha_{\D_\U  (\Omega, \W_p)}     (M^p)$.
Hence in view of Corollary~\ref{coralpha},  for each $p$, we have   
\[
  \max \left\{          \sigma_{{\alpha_p}, \U} (M^p), \frac1{2} \omega_\U (M^p) \right\} 
  \le  \alpha_{\D_\U  (\Omega, \W_p)}       (M^p)
   \le  \mu_{{\alpha_p}, \U} (M^p) + 2 \omega_\U (M^p).
  \]     
Taking the maximum for $p=0, \dots, k$ we have
\[
  \max \left\{ \sigma_{{\alpha}, \U} (M), \frac1{2} \omega_\U (M) \right\} 
  \le  \alpha_{ \D^k_\U}     (M)
   \le \mu_{\alpha, \U} (M) +  2 \omega_\U (M),
  \]
  as desired.
\end{proof}

\begin{corollary} \label{compactness in Dk}
A  subset $M$ of $\D^k (\Omega,Y)$ is relatively compact if and only if it is bounded,   
\\
$\tau$-equicontinuous and pointwise $\tau$-relatively compact.
 \end{corollary}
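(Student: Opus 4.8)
The plan is to obtain this corollary as a quick consequence of Theorem~\ref{Theorem in Dk} together with the general theory of (generalized) measures of noncompactness in complete locally convex spaces. First I would recall from \cite[Theorem~1.2.3]{SA} that, since $\D^k(\Omega,Y)$ is complete and its topology $\tau$ is generated by the family of seminorms $\{\|\cdot\|_{\C^k,\U}\}_{\U\in\K}$, the generalized Kuratowski measure $\alpha_{\D^k}$ is a generalized measure of noncompactness whose kernel consists precisely of the relatively compact members of ${\mathfrak M}_{\D^k}$; equivalently, a $\tau$-bounded set $M$ is relatively compact if and only if $\alpha_{\D^k}(M)$ is the null function on $\K$, that is $\alpha_{\D^k}(M)(\U)=\alpha_{\D^k_\U}(M)=0$ for every $\U\in\K$. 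This is the only place where completeness of $\D^k(\Omega,Y)$ is used.

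Next I would fix $\U\in\K$ and read off the conclusion from the two-sided estimate of Theorem~\ref{Theorem in Dk}, which, evaluated at $\U$, reads
\[
\max\Big\{\mu_{\overline{\alpha},\U}(M),\ \frac1{2}\,\overline{\omega}_\U(M)\Big\}\le\alpha_{\D^k_\U}(M)\le\mu_{\overline{\alpha},\U}(M)+2\,\overline{\omega}_\U(M).
\]
The left inequality shows that $\alpha_{\D^k_\U}(M)=0$ forces $\mu_{\overline{\alpha},\U}(M)=0$ and $\overline{\omega}_\U(M)=0$, while the right inequality gives the converse implication. Letting $\U$ range over $\K$, this means that $\alpha_{\D^k}(M)$ vanishes identically on $\K$ if and only if both $\mu_{\alpha_\tau}(M)$ and $\omega_{\D^k}(M)$ vanish identically, that is, if and only if $M$ is pointwise $\tau$-relatively compact and $\tau$-equicontinuous.

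Combining the two steps yields the assertion: for $M\in{\mathfrak M}_{\D^k}$, $M$ is relatively compact $\iff$ $\alpha_{\D^k}(M)\equiv 0$ $\iff$ $M$ is pointwise $\tau$-relatively compact and $\tau$-equicontinuous. I do not expect any genuine obstacle here; the only points requiring care are bookkeeping ones, namely invoking the characterization ``relatively compact $\iff$ measure zero'' in the correct form for complete locally convex spaces via \cite{SA}, and remembering that all the quantitative characteristics in play are $\K$-indexed functions, so that the equivalences above must be read off the sandwich inequality pointwise in $\U$.
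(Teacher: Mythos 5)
Your proposal is correct and follows the same route the paper intends: the corollary is stated as an immediate consequence of Theorem~\ref{Theorem in Dk}, with the identification of the kernel of the generalized measure $\alpha_{\D^k}$ with the relatively compact bounded sets supplied by the properties in \cite[Theorem~1.2.3]{SA} (which the paper cites for exactly this purpose), and the pointwise-in-$\U$ reading of the sandwich inequality doing the rest. No gaps.
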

 Let us observe that in the spaces $\D^k(\Omega, Y)$  the function
$ \mu_{\alpha_\tau}   + 2 \omega_{\D^k}$  is a  regular generalized measure of noncompactness 
 equivalent to the Kuratowski measure $\alpha_{\D^k}$.

 We also underline that, for  $\tau$-equicontinuous sets $M$ of $\D^k (\Omega,Y)$, we obtain the formula  $\alpha_{\D^k}(M)=\mu_{\alpha_\tau}(M)$.
 Moreover, as a  particular case of Theorem~\ref{Theorem in Dk},  we obtain estimates for the  Kuratowski  measure in the space  $\D^k (\R^n, \R)$ (defined for instance in \cite{KN})  and the consequent compactness criterion.
For $k=0$, Corollary~\ref{compactness in Dk}  is a special case of the well known general Ascoli-Arzel\` a theorem (\cite[Theorem~18]{K}).
Finally, the same reasoning of Theorem \ref{Theorem in Dk},  using Theorem \ref{AT}, leads to the following inequalities,
which estimate the Hausdorff measure of noncompactness  $\gamma_{\D^k} (M) $ of sets $M \in {\mathfrak M}_{{\D}^k}$,
\[
  \max \left\{ \sigma_{\gamma_\tau} (M), \frac1{2} \omega_{\D^k} (M) \right\} \le \gamma_{\D^k} (M) \le \mu_{\gamma_\tau} (M) + \omega_{\D^k} (M),
  \]
which,  when $\omega_{\D^k}(M)=0$,  gives the formula   $ \gamma_{\D^k} (M) = \sigma_{\gamma_\tau} (M)$ or =   $ \gamma_{\D^k} (M) =\mu_{\gamma_\tau} (M)$.

 \section{A remark in $\B (\Omega, Y)$  when $Y$ is a  Lindenstrauss space}
A real Banach space $Y$  is said to be an $L_1$-predual provided its dual $Y^*$ is isometric to $L^1(\mu)$ for some measure $\mu$.
Such  spaces are often referred to as  Lindenstrauss spaces and   play a central role in the Banach space theory.
The Banach space $C(K)$ of real-valued functions defined and continuous on the compact Hausdorff space $K$, under the supremum norm, is the most  natural example of a Lindenstrauss space.
 We mainly refer to \cite{R1, Casini1, LW, R2}, and to \cite{Lacey} for a survey of results on such spaces. 
Let us recall that given   a bounded subset $H$   of $Y$,
the  Chebyshev radius $r_C(H)$ is defined as the infimum of all numbers  $c>0$  such that $H$ can be covered with a ball of a radius $c$. Thus we have
$
r_C(H)=\inf \{ c>0: \ y \in Y, \ H \subseteq B(y,c) \}  
$
 with $  \frac 1 2 {\rm diam} (H) \le r(H) \le  {\rm diam} (H)$.
A point $\bar{z} \in Y$ is said to be a Chebyshev centre of~$H$
if  $H \subseteq B(\bar{z}, r_C(H))$. The set $H$ is said to be { centrable} 
 if
$r_C(H)= \frac 1 2 {\rm diam} (H)$.
 In  \cite[Theorem 1]{R2}  Lindenstrauss spaces are characterized    as  those Banach spaces   in which every finite set is centrable. Moreover, if $Y$ is  a Lindenstrauss space then every finite set has a Chebyshev centre  
and      every compact  set   is centrable (cf.    \cite[Corollary 1 and Remark 1]{R2}) .
 Whenever $Y$ is a Lindenstrauss space  we find a better lower estimate  for  the Kuratowski measure of noncompactness of bounded and pointwise relatively compact subsets of  the space $\B$. 
 \begin{proposition} Assume that   $Y$ is a Lindenstrauss space and   that  $M  \in  {\mathfrak M}_\B$ is  pointwise relatively compact.
 Then
 $
\omega_\B (M) \le \alpha(M).
$
\end{proposition}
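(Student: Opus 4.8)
The goal is to show $\omega_\B(M) \le \alpha(M)$ when $Y$ is a Lindenstrauss space and $M \in \mathfrak{M}_\B$ is pointwise relatively compact, i.e.\ $\mu_\alpha(M) = 0$. My plan is to feed this information into the right-hand estimate of Theorem~\ref{maintheoremgammaBIS} (or equivalently into Theorem~\ref{maintheoremgamma} with $\X = \B$) together with a new ingredient: a reverse inequality $\gamma_\B(M) \le \tfrac12\,\omega_\B(M)$ that holds \emph{only} in Lindenstrauss spaces and only for pointwise relatively compact sets. Indeed, the general chain already in hand gives $\tfrac12\,\omega_\B(M) \le \gamma_\B(M) \le \mu_\gamma(M) + \omega_\B(M)$, and since $\mu_\alpha(M)=0$ forces $\mu_\gamma(M)=0$, this reads $\tfrac12\,\omega_\B(M)\le\gamma_\B(M)\le\omega_\B(M)$ — so $\gamma_\B$ and $\omega_\B$ are comparable but this alone does not yield the claim. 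The extra sharpening to $\gamma_\B(M)\le\tfrac12\,\omega_\B(M)$, combined with $\alpha(M)\ge\beta(M)\ge\gamma_\B(M)$... no: rather, once $\gamma_\B(M)=\tfrac12\,\omega_\B(M)$ one still needs $\omega_\B(M)\le\alpha(M)$, which would follow from $\omega_\B(M)\le 2\gamma_\B(M)\le 2\gamma_\B(M)$ and $\alpha(M)\ge\gamma_\B(M)$... this gives only $\omega_\B(M)\le 2\alpha(M)$. So the real mechanism must be different: I expect the proof to build an explicit net.

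\textbf{The construction.} Fix $a > \alpha(M)$ and a finite cover $\{M_1,\dots,M_n\}$ of $M$ with $\operatorname{diam}(M_i)\le a$. For each $i$ pick $\varphi_i \in M_i$. I claim $\{\varphi_1,\dots,\varphi_n\}$, together with a suitable finite partition of $\Omega$, witnesses $\omega_\B(M)\le a$. The subtlety is that for $f\in M_i$ we only know $\|f-\varphi_i\|_\infty\le a$, which gives $\operatorname{diam}((f-\varphi_i)(A))\le 2a$ on any $A$ — the factor $2$ we want to avoid. To recover the missing factor, use that $M(x)$ is relatively compact (pointwise relative compactness) and that $Y$ is Lindenstrauss: for each $x$, the compact set $M_i(x)$ is \emph{centrable}, hence has a Chebyshev centre $z_i(x)$ with $M_i(x)\subseteq B(z_i(x), \tfrac12\operatorname{diam}(M_i(x)))\subseteq B(z_i(x), \tfrac12 a)$. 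The plan is to replace $\varphi_i$ by a function $x\mapsto z_i(x)$ (a genuine element of $\B$, bounded since $M$ is bounded), so that $\|f(x) - z_i(x)\| \le \tfrac12 a$ for all $f\in M_i$, $x\in\Omega$, whence $\operatorname{diam}((f - z_i)(A)) \le a$ for every $A$. Taking $\{A_1\} = \{\Omega\}$ as the partition then gives $\omega_\B(M)\le a$, and letting $a\downarrow\alpha(M)$ finishes.

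\textbf{The obstacle.} The main difficulty is that the pointwise Chebyshev centres $z_i(x)$ are defined separately for each $x$, so $x\mapsto z_i(x)$ need not be bounded \emph{a priori}; one must check it lies in $\B$. Since $M$ is bounded, say $\|f\|_\infty\le R$ for $f\in M$, and $\varphi_i\in M$, we get $M_i(x)\subseteq B(\varphi_i(x), a)$, so $z_i(x)$ may be chosen within distance $a + R$ of $\theta$ — boundedness is therefore automatic once we simply \emph{select} a Chebyshev centre for each $x$ (no continuity or measurability is needed, as $\B$ imposes none). A second, more delicate point: centrability of $M_i(x)$ requires $M_i(x)$ to be compact, not merely bounded; $M(x)$ is relatively compact by hypothesis, and $M_i(x)\subseteq M(x)$, so $\overline{M_i(x)}$ is compact and centrable with the same diameter, and its Chebyshev centre serves. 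I would also double-check that $\operatorname{diam}(M_i(x))\le\operatorname{diam}(M_i)\le a$, which is the same elementary estimate used in the proof of Theorem~\ref{maintheoremalpha}. Assembling these observations yields $\omega_\B(M)\le\alpha(M)$ directly, with no appeal to the measure-of-noncompactness inequalities at all.
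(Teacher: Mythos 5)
Your construction is essentially the paper's: cover $M$ by sets $M_1,\dots,M_n$ of diameter at most $a>\alpha(M)$, use pointwise (near\mbox{-})Chebyshev centres of the relatively compact sets $M_i(x)$ to build functions $\varphi_i\in\B$ with $\|f-\varphi_i\|_\infty\le\frac12 a$ (up to a $\delta$) for $f\in M_i$, and test $\omega_\B$ with the trivial partition $\{\Omega\}$ and the finite set $\{\varphi_1,\dots,\varphi_n\}$. Your remarks on the boundedness of $x\mapsto z_i(x)$ and on $\mathrm{diam}(M_i(x))\le\mathrm{diam}(M_i)$ are also exactly what is needed, and you are right that the measure-of-noncompactness inequalities play no role.

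The one genuine flaw is the step ``$\overline{M_i(x)}$ is centrable, hence has a Chebyshev centre.'' Centrability only says that the Chebyshev radius equals half the diameter, i.e.\ that the infimum $r_C(H)=\inf\{c>0:\ H\subseteq B(y,c)\ \text{for some}\ y\in Y\}$ has the value $\frac12\,\mathrm{diam}(H)$; it does not say that this infimum is attained, which is what the existence of a Chebyshev centre means. The paper's preliminaries (quoting Rao) guarantee exact Chebyshev centres only for \emph{finite} subsets of a Lindenstrauss space, and only centrability for compact ones, and its proof is built precisely around this distinction: it fixes $\delta>0$, takes a finite inner $\delta$-net $F_{i,x}$ of $M_i(x)$, takes the exact Chebyshev centre $z_{i,x}$ of the finite set $F_{i,x}$, and accepts the resulting bound $\|f-\varphi_i\|_\infty\le\frac12\,\mathrm{diam}(M_i)+\delta$, the $\delta$ being removed at the end by arbitrariness. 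Your argument is repaired the same way (or, equivalently, by taking for each $\delta>0$ a point $z_{i,x}$ with $\overline{M_i(x)}\subseteq B\left(z_{i,x},\frac12\,\mathrm{diam}(M_i(x))+\delta\right)$, which centrability does provide); as written, however, the invocation of an exact centre of a compact set is unjustified. The exploratory first paragraph about deriving the result from the $\gamma$--$\omega$ inequalities can simply be deleted, as you yourself conclude there.
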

\begin{proof}
 Let $a > \alpha (M)$ and choose  $M_1, \dots, M_n$  such that 
 $M=\cup_{i=1}^n M_i$ and ${\rm diam} M_i \le a$ for $i=1, \dots n$.
  Let $\delta >0$ be arbitrarily fixed. Let $i \in \{1, \dots,n \}$.   Fix  $x \in \Omega$ and let  $F_{i,x} \subseteq Y$ be a finite   inner $\| \cdot \|$-$\delta$-net  for $M_i(x)$. 
Let $z_{i,x}$ be a Chebyshev center of $F_{i,x}$  in $Y$, so that
$
  F_{i,x} \subseteq B (z_{i,x},  \ r_C(F_{i,x}))$, where, $ r_C(F_{i,x})= \frac 1 2 \mbox{diam} (F_{i,x})$.
By the hypothesis $\overline {M_i(x)}$  is a compact set, so that it is centrable, that is, 
$
r_C (\overline {M_i(x)}) = \frac 1 2 \mbox{diam} (\overline {M_i(x)}).
$
Now, we define the mapping $ \varphi_{i}: \Omega \to Y$ by setting  $
\varphi_{i } (x)= z_{i,x}$, for $x \in \Omega$.
Then,  for $f \in M_i$  arbitrarily fixed, we have
\begin{equation} \label{eq1}
\|f- \varphi_i\|_\infty 
   \le  \frac 1 2 \mbox{diam} (M_i) + \delta.
\end{equation}
Indeed, 
for each $ x \in \Omega$ choose $y_{i, x} \in  F_{i,x}$ such that
$
\|f(x)-  y_{i, x}  \| \le \delta.
$ 
Hence we have
\begin{eqnarray*}
\|f- \varphi_i\|_\infty &=& \sup_{  x \in \Omega} \|f(x)- \varphi_i(x)\|  =\sup_{ x  \in \Omega} \|f(x)- z_{i,x}\|
\\[.1in]
&
  \le &
   \sup_{x \in \Omega}  \left(  \|f(x)- y_{i,x}\| +  \|z_{i,x} - y_{i,x}\| \right)    \le    \sup_{x \in \Omega}   \|z_{i,x} - y_{i,x}\|    + \delta
    \\[.1in]
&
\le &
  \frac 1 2  \sup_{x  \in \Omega}  \mbox{diam} (F_{i,x} )+ \delta  \le    \frac 1 2  \sup_{x  \in \Omega}  \mbox{diam} (M_i(x)) + \delta 
    \\[.1in]
&
   \le  & \frac 1 2 \mbox{diam} (M_i) + \delta .
  \end{eqnarray*}
 We also get $\varphi_i \in \B$.
Finally, let  $f \in M$ and choose $i$ such that $f \in M_i$.
Then using (\ref{eq1}) we find 
\begin{eqnarray*}
\mbox{diam} \left( (f- \varphi_i)(\Omega) \right) & =  &\sup_{x,y \in \Omega}  \|    (f- \varphi_i)(x)-  (f- \varphi_i)(y) \|        
\\[.1in]
&
  \le &
 \sup_{x,y \in \Omega} \left(   \|    (f- \varphi_i)(x) \| + \| (f- \varphi_i)(y) \|     \right) \le      2  \| f- \varphi_i \|_\infty
 \\[.1in]
&
  \le &
   2 \left( \frac 1 2 \mbox{diam} (M_i) + \delta \right) =    \mbox{diam} (M_i) + 2 \delta  \le a + 2 \delta.
\end{eqnarray*}
Taking $\{ \Omega \} $ as a partition of $\Omega$ and $\{ \varphi_1, \dots, \varphi_n \} $ as a finite set in $\B$,  from 
 the arbitrariness of $a$ and $\delta$, it follows
$
\omega_\B(M) \le \alpha (M),
$
which is the thesis.
 \end{proof}
 
 Combining the previous result with Theorem \ref{maintheoremalpha} we derive the following estimates.
 \begin{theorem} \label{Lin} Assume  that  $Y$ is a Lindestrauss space and that  $M  \in  {\mathfrak M}_\B$  is pointwise relatively compact.
 Then
\begin{equation*}
\omega_\B (M) \le \alpha(M) \le 2 \omega_\B (M).
\end{equation*}
\end{theorem}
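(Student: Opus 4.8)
The statement is an immediate synthesis of two facts already at our disposal, so the plan is simply to assemble them. First I would invoke Theorem~\ref{maintheoremalpha} with $\X=\B$, which gives
\[
\mu_\alpha(M)\le\alpha(M)\le\mu_\alpha(M)+2\omega_\B(M).
\]
Since $M$ is assumed pointwise relatively compact we have $\mu_\alpha(M)=0$; substituting this into the right-hand inequality yields at once $\alpha(M)\le 2\omega_\B(M)$, which is the upper estimate in the statement.

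For the lower estimate $\omega_\B(M)\le\alpha(M)$ I would appeal directly to the Proposition established just above, whose hypotheses ($Y$ a Lindenstrauss space, $M$ bounded and pointwise relatively compact) coincide verbatim with those of the present theorem. Combining the two bounds produces the chain $\omega_\B(M)\le\alpha(M)\le 2\omega_\B(M)$, and the proof is complete.

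In this decomposition the genuinely substantive ingredient is that Proposition, not the theorem itself: the inequality $\omega_\B(M)\le\alpha(M)$ rests on the characterization of Lindenstrauss spaces through centrability of compact sets, so that after covering $M$ by finitely many sets $M_i$ of diameter at most $a$ and approximating each pointwise slice $M_i(x)$ by a finite inner $\|\cdot\|$-$\delta$-net whose Chebyshev centre is declared to be $\varphi_i(x)$, one controls $\|f-\varphi_i\|_\infty$ by roughly $\tfrac12\mbox{diam}(M_i)+\delta$. The only place one must be careful is the bookkeeping ensuring that the single cover $\{M_i\}$ together with the trivial partition $\{\Omega\}$ simultaneously witnesses the defining condition of $\omega_\B$ for every $f\in M$, which is precisely what the Proposition's proof arranges. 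Consequently no further obstacle arises here, and the theorem follows with no additional work.
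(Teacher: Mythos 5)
Your proposal is correct and follows exactly the paper's own derivation: the upper bound comes from the right-hand inequality of Theorem~\ref{maintheoremalpha} with $\X=\B$ and $\mu_\alpha(M)=0$, and the lower bound is precisely the preceding Proposition on Lindenstrauss spaces. Nothing further is needed.
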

The following two examples show that the inequalities given in Theorem~\ref{Lin} are the best possible.
\begin{example} \label{Lin 1}
{\rm
  Let   $Y $ be an infinite-dimensional Lindenstrauss space  with origin $\theta$, and let $\B=\B(Y,Y)$.
Let $(y_n)_n$ be a sequence in $Y$ such that
$\|y_n -y_m \| \ge 2$ when   $n \ne m$.
We now consider the closed balls  $B(Y)$  and $B(y_n,1)= y_n+ B(Y) $ for all $n=1, 2, \dots$,  which for short we will denote by $B$ and $B_n$, respectively.  Clearly, the sets of the  sequence $(B_n )_n$ are pairwise disjoint.
Let us define $f_n:Y \to Y$ for $n=1, 2, \dots $, by setting
\[
f_n(y)= \begin{cases}
  \theta  \quad &\mbox{for}  \ \  y \notin B_n
\\ %%%[.2in]
y-y_n \quad&\mbox{for}  \ \  y \in B_n.
\end{cases}
\]
Set $M = \{ f_n: \ n=1,2,\dots \}$. 
Given $y \in Y$, $M(y)={\theta}$ if $y \notin \cup_{k=1}^\infty B_k$ or $M(y) = \{\theta, y-y_k \}$ if $y \in B_k$, so that $M$ is pointwise relatively compact.
Now, let us observe
that $f_n (Y)= f_n(B_n)= \{ y-y_n: \ y \in B_n\}= B$, for all $n$, hence  by (\ref{DUE})  
 we get
 \[
\omega ( \{f_n \} ) = \alpha (f_n(Y))= \alpha (B)= 2,
\]
so that $\omega (M) \ge 2$.
On the other hand, since  $\mbox{diam} (f_n(Y))= \mbox{diam} (B)=2$ for all $n$, taking $\{Y \} $ as partition of $Y$  we obtain $\omega (M) \le 2$, thus $\omega (M)=2$.
\\
{ Next,  we prove $\alpha (M)=1$. To this end, having in mind the definition of $f_n$,  first we notice 
   that  for all $n, m \in \N$ with $n \ne m$ we have
\[
 \|f_n - f_m \|_\infty
=    \sup_{y \in Y}\|f_n(y) - f_m (y)\|=1  
\]
so that, considering the  the Istratescu measure of noncompactness of $M$, we have $\beta (M) \ge 1$.
{  Since $
  \beta(M) \le \alpha (M)
$, we have
$
  \alpha (M) \ge 1 .
$}
On the other hand
\[
 \mbox{diam} (M)= \sup_{n,m \in \N} \|f_n - f_m \|_\infty
 =1,
\]
so that $\alpha (M) \le 1$, and our assert follows.}
\\
Now we show $\omega_\B (M)=1$.
Set $\varphi (y)= \frac 1 2 \sum_{n=1}^\infty f_n(y)$, for all $y \in Y$.  Then to evaluate $\omega_\B (M)$ we consider $\{ Y \}$   as a partition of $Y$  and
$ \{ \varphi \}$ as 
a finite set in $\B$.
 Then given $y,z \in Y$ and $n \in \N$, we have
\[
\| f_n(y)- \varphi (y)  - f_n(z)+ \varphi (z) \| = \frac 1 2 \| f_n(y)- f_n (z) \|= \frac 1 2 \| y-z \| \le 1 \ \  \mbox{if} \ y,z \in B_n
\]
and
$
\| f_n(y)- \varphi (y)  - f_n(z)+ \varphi (z) \|  \le \frac 1 2  \ \  \mbox{if} \ y,z  \ \mbox{are not simultaneously  both in} \  B_n.
$
Consequently { $\mbox{diam} (f_n- \varphi)(Y)) \le 1$,  } which implies $\omega_\B (M) \le 1$.
Assume by contradiction that  $\omega_\B (M) < 1$  and let $\omega_\B (M) = 1
- \delta$. Then there are   a finite partition $ \{ A_1, \dots, A_n\} $   of $Y$,  and
a finite set $ \{ \varphi_1, \dots, \varphi_m \} $  in  $  Y$  such that for all $ f \in M$, 
there is  $ j \in \{1, \dots, m\}$ with  
$\mbox{diam}( (f - \varphi_j) (A_i) ) \le 1 - \delta$  for $ i= 1, \dots, n$.
We set, for $ j \in \{1, \dots, m\}$,
\[
M_j= \{ f \in M : \  \mbox{diam}( (f - \varphi_j) (A_i) ) \le 1- \delta  \ \mbox{for} \  i= 1, \dots, n \},
\]
without loss of generality, we may assume that each $M_j $ is an infinite set.
Moreover, since $\omega (M)=2$,     there is  $f_s \in M$  and  $i \in \{ 1, \dots, n \}$ such that 
$
\mbox{diam} (f_s (A_i)) \ge  2 -  \delta. 
$
Fix $y,z \in A_i$ such that $\| f_s(y)-f_s(z) \| \ge  2 -  \delta$, and let  $j \in  \{ 1, \dots, m \}$ such that $f_s \in M_j$.
Then 
\[
1 - \delta \ge  \|  (f_s  - \varphi_j)(y)-    (f_s - \varphi_j)(z)  ) \| \ge \big| \|  f_s(y )- f_s(z) \| -  \| \varphi_j(y)- \varphi_j(z) \|  \big|, 
\]
so it follows  $ \| \varphi_j(y)-  \varphi_j(z)  \|  \ge 1 $.
On the other hand, taking $f_l    \in M_j$ with $l \ne s$ we have $f_l(y)=f_l(z)=0$,  thus
\[
\|f_l (y)- \varphi_j(y)-  (  f_l    (z)- \varphi_j(z)) \| = \|    \varphi_j(y)- \varphi_j(z)) \|  \le 1 - \delta,
\]
which is a contradiction, consequently 
  $\omega_\B (M)=1$ and  thus $\alpha (M)= \omega_\B(M)$.
  }
\end{example}

\begin{example}
{\rm
 Let $\Omega= [0, + \infty)$ and $Y= (\C([0,1]), \| \cdot \|_\infty)$,   hence $Y$ a Lindenstrauss space.  Now we  define
 $f_k : \Omega \to Y$  for $k=1, 2, \dots $, by putting
\[
f_k(x)= \begin{cases}
\psi_n  \quad \mbox{for}  \ \ x= k - \frac 1 n \ \ \mbox{for} \ \ n=1,2, \dots
\\  
\psi_0 \quad \mbox{otherwise} ,
\end{cases}
\]
with $\|\psi_n \|_\infty=1$ for $n=1,2, \dots$ and $\psi_0(t)=0$ for all $t \in [0,1]$, so that $f_k \in \B( \Omega, Y)$.
  Setting  $M= \{ f_k, \ \ k=1,2,\dots \}$ we have that $M$ is pointwise relatively compact.
Moreover,    $\|f_k-f_s\|_\infty = \sup_{x \in \Omega} \|f_k(x)-f_s(x) \|_\infty
=1$ (for $k\ne s$) and $\mbox{diam} (M)= \sup_{k,s \in \N} \|f_k-f_s\|_\infty~=~1$, hence similarly as  in the case of  Example~\ref{Lin 1},  we deduce $\alpha (M)=1$.

On the other hand, let us consider, for $n=1,2, \dots $ and  $t \in [0,1]$,
\[
\psi_n (t) = \begin{cases}
0   \quad & \mbox{for}  \ \ t \le 1 - \frac 1 n  
\\  
n\left(t- \left( 1- \frac 1 n \right)\right)  & \mbox{for}  \ \ 1- \frac 1 n  < t  \le 1,
\end{cases}
\]
and put  $\varphi (x)= \frac 1 2 \sum_{k=1}^\infty f_k(x)$ for $x \in \Omega$.  In order to  evaluate $\omega_\B (M)$ we consider $\{\Omega \}$   as a partition of $\Omega$  and
$ \{ \varphi \}$ as 
a finite set in $\B$. Then it is easy to check that
\[
\|f_k- \varphi \|_\infty \le \frac 1 2,
\]
whence { $\mbox{diam} ((f- \varphi)(Y)) \le \frac 1 2$, so that} $\omega_\B(M) \le \frac 1 2$. Since  $\alpha (M)=1$ and Theorem~\ref{Lin}  implies $ \frac 1 2 \alpha (M)  \le \omega_\B(M)$,   we have  $\alpha (M)= 2\omega_\B(M)$.
}
\end{example}

\noindent {\bf Acknowledgements.}
The first author was supported by FFR2018/Universit\`a degli Studi di Palermo.

\vspace{2ex}

Diana Caponetti\\
Dipartimento di Matematica e Informatica, 
Universit\`a di Palermo, Via Archirafi 34, 90123 Palermo, Italy. diana.caponetti@unipa.it

Alessandro Trombetta, Giulio Trombetta
\\ Dipartimento di Matematica e Informatica, 
Universit\`a  della Calabria,
Ponte Pietro Bucci 31B, 
87036 Arcavacata di Rende, Cosenza, Italy. aletromb@unical.it (A.T.);  \ trombetta@unical.it (G.T.)


\begin{thebibliography}{00}
\bibitem{A}
A. Ambrosetti, {Un teorema di esistenza per le equazioni differenziali sugli spazi di Banach},
 Rend. Sem. Mat. Univ. Padova { 39} (1967) 349-361.
\bibitem{ADP} 
J. Appell, E. De Pascale, Some parameters associated with the Hausdorff measure of noncompactness in spaces of measurable functions,  Boll. Un. Mat. Ital. B (6) 3 (2) (1984) 497-515.
\bibitem{iraniano} 
R. Arab, R.  Allahyari,  A.  Shole Haghighi, Construction of measures of noncompactness of $\C^k(\Omega)$  and $\C^k_0(\Omega)$ and their application to functional integral-differential equations, Bull. Iranian Math. Soc. 43 (1)  (2017) 53-67.
\bibitem{ADB} J.M. Ayerbe Toledano, T. Dominguez Benavides, G. Lopez Acedo, Measures of Noncompactness in Metric Fixed Point Theory, Birkh\"auser Verlag, Basel, 1997.
\bibitem{AT}  A. Avallone,  G. Trombetta, Measures of noncompactness in the space $L_0$ and a generalization of the Arzel\` a-Ascoli  theorem,  Boll. Un. Mat. Ital. B (7) 5 (3) (1991) 573-587.
\bibitem{Av}
C. Avramescu, Sur l'existence des solutions convergentes des syst\`emes d'\' equations diff\' erentielles non lin\' eaires, Ann. Mat. Pura Appl. 4 (1969) 147-168.
\bibitem{BCW} J. Bana\'s, A. Chlebowicz,  W. Wo\'s, 
On measures of noncompactness in the space of functions defined on the half-axis with values in a Banach space, J. Math. Anal. Appl. 489 (2)  (2020) 124-187.
\bibitem{BMR} J. Bana\'s, 
 N. Merentes, B.  Rzepka,  Measures of noncompactness in the space of continuous and bounded functions defined on the real half-axis,  Advances in Nonlinear Analysis via the Concept of measure of Noncompactness, 158, Springer, Singapore, 2017. 
\bibitem{BN} J. Bana\'s,  
R. Nalepa,  A measure of noncompactness in the space of functions with tempered increments on the half-axis and its applications, J. Math. Anal. Appl. 474 (2) (2019)  1551-1575.
\bibitem{BG} J.  Bana\' s,  K.  Goebel,  Measures of Noncompactness in Banach Spaces.
Lect. Notes Pure Appl. Math. vol. 60.  Marcel Dekker, New York (1980).
\bibitem{Bartle} R.G. Bartle, On compactness in functional analysis,  Trans. Am. Math. Soc. 79  (1955) 35-57. 
\bibitem{R1} 
P. Bandyopadhyay, T.S.S.R.K. Rao, 
Central subspaces of Banach spaces,
J. Approx. Theory 103 (2000) 206-222.
%%%%%%%%%
\bibitem{CTT}
D. Caponetti,  A. Trombetta,  G. Trombetta, Monotonicity and total boundedness in spaces of     ``measurable'''     functions. Math. Slovaca 67 (6) (2017),  1497-1508.  
\bibitem{Casini1}
 E. Casini, E. Miglierina,  \L. Piasecki,   R. Popescu,   Weak$^*$ fixed point property in $\ell_1$ and polyhedrality in Lindenstrauss spaces, Studia Math. 241 (2) (2018)  159-172.
\bibitem{cianciaruso} 
 F. Cianciaruso, V. Colao, G. Marino, H.-K. Xu, A compactness result for differentiable functions with an application to boundary value problems, Ann. Mat. Pura Appl. (4) 192 (3) (2013)  407-421.
\bibitem{DS}   N. Dunford, J.T. Schwartz,  
  Linear Operators I. General Theory, Wiley-Interscience Pub., Inc., New York, 1964.
\bibitem{F}
M. Fr\' echet, Sur les ensembles compacts de fonctions mesurables, Fund. Math. 9 (1927) 25-32.
\bibitem{heinz} H.-P. Heinz, 
Theorems of Ascoli type involving measures of noncompactness,
Nonlinear Anal. 5 (3) (1981)  277-286. 
\bibitem{K}  
J.L. Kelley, General Topology, Springer, New York-Berlin, 1975. 
\bibitem{KN}  
J.L. Kelley, I. Namioka, Linear topological spaces, The University Series in Higher Mathematics, D. Van Nostrand Co., Inc., Princeton, N.J., 1963. With the collaboration of W. F. Donoghue, Jr., Kenneth R. Lucas, B. J. Pettis, Ebbe Thue Poulsen, G. Baley Price, Wendy Robertson, W. R. Scott, Kennan T. Smith.
\bibitem{Lacey}  H.E. Lacey, The Isometric Theory of Classical Banach Spaces. Die Grundlehren der mathematischen Wissenschaften 208, Springer-Verlag, 197.
\bibitem{LW} J. Lindenstrauss and D. E. Wulbert,   On the classification of the Banach spaces whose duals are $L_1$ spaces,
J. Funct. Anal. 4 (1969), 332-349.
   \bibitem{LLWZ} Z. Liu, L. Liu, Y. Wu, J. Zhao, Unbounded solutions of a boundary value problem for abstract $n$th-order differential equations on an infinite interval,
   J. Appl. Math. Stoch. Anal. 11. Art. ID 589480 (2008) 11pp.
\bibitem{R2} T.S.S.R.K. Rao,
{Chebyshev centers and centrable sets}, 
Proc. Amer. Math. Soc. 130 (9)  (2002)  2593-2598.
\bibitem{N} R.D. Nussbaum,
A generalization of the Ascoli theorem and an application to functional differential equations, 
J. Math. Anal. Appl. 35 (1971) 600-610. 
 \bibitem{TTT1} 
  M. Tavernise, A. Trombetta,  G. Trombetta,  On the lack of equi-measurability for certain sets of Lebesgue-measurable functions. Math. Slovaca 67 (6) (2017), 1595-1601.  
\bibitem{SA}   
B.N. Sadovskij, Limit-compact and condensing operators, Uspekhi Mat. Nauk 27 (1972) 81-146 (Engl. transl.: Russian Math. Surveys 27 (1972) 85-155) (in Russian).


\bibitem{SAIE}  
S. Saiedinezhad,
On a measure of noncompactness in the Holder space $ C^{k, \gamma} (\Omega)$
and its application. J. Comput. Appl. Math. 346 (2019) 566–571.


\bibitem{S} V.L. \v{S}mulian,    On compact sets in the space of measurable functions,    Mat. Sbornik N.S. 15 (57) (1944) 343-346 (in Russian).
\end{thebibliography}
\end{document}